\tikzset{
  knot diagram/every strand/.append style={
    ultra thick,
    red
  },
  show curve controls/.style={
    postaction=decorate,
    decoration={show path construction,
      curveto code={
        \draw [blue, dashed]
        (\tikzinputsegmentfirst) -- (\tikzinputsegmentsupporta)
        node [at end, draw, solid, red, inner sep=2pt]{};
        \draw [blue, dashed]
        (\tikzinputsegmentsupportb) -- (\tikzinputsegmentlast)
        node [at start, draw, solid, red, inner sep=2pt]{}
        node [at end, fill, blue, ellipse, inner sep=2pt]{}
        ;
      }
    }
  },
  show curve endpoints/.style={
    postaction=decorate,
    decoration={show path construction,
      curveto code={
        \node [fill, blue, ellipse, inner sep=2pt] at (\tikzinputsegmentlast) {}
        ;
      }
    }
  }
}
\pgfplotsset{width=7cm,compat=1.8}
\providecommand*{\toclevel@title}{0} 
\edef\toclevel@author{\the\numexpr\toclevel@title+1} 
\providecommand*{\toclevel@title}{0}
\def\toclevel@author{1000}
\newenvironment{equationth}{\stepcounter{theorem}\begin{equation}}{\end{equation}}
\newcommand{\pn}{{\mathbb{P}^{n}}}
\newcommand{\R}{\mathbb{R}}
\newcommand{\C}{\mathbb{C}}
\newcommand{\Z}{\mathbb{Z}}
\newcommand{\0}{\underline{0}}
\newcommand{\B}{\mathbb{B}}
\newcommand{\s}{\mathbb{S}}
\newcommand{\e}{\varepsilon}
\newcommand{\sF}{\mathcal{F}}
\newcommand{\F}{\mathcal{F}}
\newcommand{\GSV}{{\rm GSV}}
\newcommand{\CS}{{\rm CS}}
\newcommand{\Res}{{\rm Res}}
\newcommand{\BB}{{\rm BB}}
\spnewtheorem{theo}{Theorem}[section]{\bf}{\it}
\spnewtheorem{prop}[theo]{Proposition}{\bf}{\it}
\spnewtheorem{cor}[theo]{Corollary}{\bf}{\it}
\spnewtheorem{lem}[theo]{Lemma}{\bf}{\it}
\spnewtheorem{defin}[theo]{Definition}{\bf}{}
\spnewtheorem{proper}{Property}{\bf}{}
\spnewtheorem{quest}[theo]{Question}{\bf}{}
\spnewtheorem{probl}[theo]{Problem}{\bf}{}
\spnewtheorem{exa}[theo]{Example}{\bf}{}
\spnewtheorem{cond}[theo]{Condition}{\bf}{}
\def\Z{{\mathbb Z}}
\def\R{{\mathbb R}}
\def\C{{\mathbb C}}
\def\B{{\mathbb B}}
\def\F{\mathcal F}
\def\0{\underline 0}
\def\e{\varepsilon}
\def\O{\mathcal O}
\def\nn {\vskip 0.3cm \noindent }
\def\nn {\vskip.2cm \noindent}
\def\vv {\vskip.2cm}
\def\s{\mathbb{S}}
\def\mK{{\bf K}}
\def\ind{{\rm Ind}}
\begin{document}

\title*{Indices and residues: from Poincaré-Hopf to Baum-Bott, and Marco Brunella}
\author{Maur\'icio Corr\^ea and Jos\'e Seade}
\institute{Maur\'icio Corr\^ea \at  Universit\`a degli Studi di Bari, 
Via E. Orabona 4, I-70125, Bari, Italy, \email{mauricio.barros@uniba.it, mauriciomatufmg@gmail.com }
\and Jos\'e Seade \at Instituto de Matem\'aticas, Unidad Cuernavaca, Universidad Nacional Aut\'onoma de M\'exico. Ciudad Universitaria,  04510 Mexico City; 
\email{jseade@im.unam.mx}}
%
%
\maketitle

\abstract*{We study and discuss invariants of vector fields and holomorphic foliations that intertwine the theories of  complex analytic singular varieties and singular holomorphic foliations on complex manifolds:  two different settings with many points in common. }

\abstract{We study and discuss invariants of vector fields and holomorphic foliations that intertwine the theories of  complex analytic singular varieties and singular holomorphic foliations on complex manifolds:  two different settings with many points in common. 
}

\section*{Contents}
\setcounter{minitocdepth}{1}
\dominitoc

\section*{Introduction}\label{sec:1}
\mtaddtocont{\protect\contentsline{mtchap}{Introduction}{\thepage}\hyperhrefextend}

Holomorphic vector fields on complex manifolds determine   holomorphic  foliations   by complex curves with singularities at the zeros -or singularities- of the vector field.  
The leaves of this foliation are immersed Riemann surfaces. 
The most basic invariant of a vector field at an isolated singularity is its Poincaré-Hopf  local  index, 
and the Poincaré-Hopf index theorem can be regarded as saying that the Euler characteristic of the manifold, which is a global  topological invariant, actually is localized at the singularities of the vector field. A myriad of important  lines of research spring from this theorem, permeating all mathematics. Two of them are particularly relevant for us: 
\begin{itemize}
\item The theory of characteristic classes of vector bundles and singular varieties.  This is much related with indices of vector fields and frames; and
\item The analogous theory for holomorphic singular  foliations on manifolds. 
\end{itemize}

Indeed, the foundational results by R. Bott \cite{Bott1967} on indices and residues for holomorphic vector fields with singularities (not necessarily isolated) have profoundly influenced areas ranging from foliation theory to   applications in enumerative algebraic geometry \cite{EllingsrudStromme1996,Kontsevich1995}. This latter field was notably advanced by Kontsevich \cite{Kontsevich1995}, who provided a rigorous framework for formulating and verifying predictions in the enumerative geometry of curves, as derived from Mirror Symmetry. In \cite{Baum1} Baum and Bott subsequently generalized  Bott's theorem to meromorphic vector fields, and later  extended these for higher-dimensional foliations \cite{Baum2}. Whence, we see that the indices  and residues associated with foliations are of great significance, and the work of M. Brunella, T. Suwa, M. Soares and others turned this into a rich and fascinating area of mathematics where a lot is happening nowadays. See for instance \cite{Suwa-Herman} for an account on this subject.

On the other hand the astounding impact of characteristic classes in manifolds theory of course inspired the search for such invariants  in the setting of singular varieties. The work of D. P. Sullivan, M.-H. Schwartz, R. MacPherson, W. Fulton, P. Baum and others laid down  the foundations of the theory, and then the work of J.-P. Brasselet, J. Schürmann, L. Maxim, P. Aluffi  and others are making this a rich and fascinating theory as well.  And as noted for instance in \cite{BLS, BSS, EG, CMS-2}, the theory of indices of vector fields and 1-forms comes into the picture playing an important role.

In fact the study  of indices and residues
  is  of utmost importance in the local and global  study of both,  singular holomorphic foliations in complex manifolds and singular complex analytic varieties: two different theories with many  points in  common. The purpose of this work is to thread a path 
   intertwining  these two theories. 
  We use as spearhead  the GSV-index.

The first indices of vector fields and 1-forms on singular varieties in the literature appeared in the work of M. H. Schwartz \cite {Sch1} and R. MacPherson \cite{MP}. Yet, this was  only for specific vector fields or 1-forms, aimed to defining Chern classes for singular varieties. The GSV index   was introduced in \cite{Seade, GSV1} for   vector fields on isolated complex hypersurface singularities,  and it began the theory of  indices of vector fields and 1-forms on singular varieties.

The definition of the GSV index in 1987 was followed by the radial index (1991, and then again by different authors  in 1997, 1998), the homological index (1998),  indices for 1-forms (2001), the local Euler obstruction for vector fields (2004) and for 1-forms (2005), the logarithmic index (2005) and several other related invariants. We briefly discuss these invariants in Section 1; we also comment on their relation with Chern classes for singular varieties, and we refer the reader to \cite{BSS, CMS-2} for more on this subject.

From the viewpoint of foliations and residues, in his  remarkable paper \cite {Bru}, M. Brunella gave in 1997  an interpretation of the GSV index in the setting of holomorphic 1-dimensional foliations in complex surfaces, showing that this index is of great importance in the theory of holomorphic foliations.  In fact 
several of the invariants that we study in the sequel  spring from the  Baum-Bott residues, so we briefly discuss these  in Section 2. This section also serves as an introduction to the subject of singular holomorphic foliations and Pfaffian systems.

One of the main problems in this field is the famous Poincaré problem, which seeks to bound the degree of invariant varieties of holomorphic foliations in terms of the degree of the foliation. In   \cite{Bru}, Brunella showed that the non-negativity of the total sum of the GSV index of the foliation along an invariant curve serves as an obstruction to affirmatively solving the Poincaré problem. He further proved in his book \cite{Brunella2004} that this invariant plays a significant role for the classification of holomorphic foliations in complex surfaces.

  Section \ref{S. Brunella} has  Brunella’s article \cite {Bru} as its core; we focus here on 1-dimensional holomorphic foliations on complex surfaces. We briefly discuss the Baum-Bott residues in this setting, and their relations with the Milnor number for foliations, the Camacho-Sad indices, the variations defined by Khanedani and Suwa, and the GSV index as defined by Brunella. We recall Brunella’s theorem saying that 
  if we have a holomorphic foliation $\F$ is a complex surface $X$ and 
  $S$ is a nondicritical separatrix at a point $p$ then:
$$\mathrm{GSV}(\F,S,p) \ge 0 \,.$$ 

If $\F$ further is a generalized curve ({\it i.e.} there are no saddle-nodes in its resolution) and $S$  is maximal,  union of all separatrices at $p$, then:
$$\mathrm{GSV}(\F,S,p) = 0 \,.$$
And the reciprocal from \cite{CavalierLehmann2001}:  if $\mathrm{GSV}(\F, S, p) = 0 \,,$ then  $\F$ is a generalized curve at $p$ and $S$ is the maximal separatrix.

Sections \ref {Poincare Problem} and \ref{Log-Alex}  look at higher dimensions.   These  are mostly based on work by Corr\^ea and Machado \cite{CorreaMachado2019, Correa-MachadoGSV} and  Corr\^ea, Louren\c co and   Machado  \cite{CLM2024}.  
  There are two previous articles that somehow paved the way. 
  In \cite{SS1}, Seade and Suwa provide Baum-Bott type formulas for foliations by curves induced by global holomorphic vector fields on complex manifolds with a non-empty boundary.  Also, a  Bott-type formula for complex orbifolds is studied in \cite{CorreaRodriguezSoares2016}, and formulas relating residues via good resolutions are obtained.  
       To address the case of a  projective variety $Y$ with more complicated  singularities, it is natural to consider a log resolution $(X, D) \to Y$ and to relate the residues of holomorphic vector fields on $X$ to the Chern classes of the logarithmic tangent bundle $TX(-\log D)$. This is   explained  in Section \ref{Log-Alex}. 
    
  In Section \ref {Poincare Problem}, using 
Aleksandrov's Decomposition Theorem we define, following \cite{Correa-MachadoGSV}, the GSV index for 
Pfaff systems $\omega$ of dimension $k$ on complex 
manifolds $X$, relative to an invariant variety  $V$ in  $X$ which is  a reduced local complete intersection of dimension $n-k$. This is done {\it à la} Brunella and associates  an index to each  codimension $1$ component  $S \subset V$ of the singular set of the Pfaff system 
induced by a twisted form $\omega \in \mathrm{H}^0(X, \Omega_X^{n-k} \otimes \mathcal{L})$. 
It is proved that   these indices localize the Chern class of the line bundle $\mathcal{L} \otimes \det (N_{V/X})^{-1}$ restricted to $V$. Then  a method  is given for calculating these GSV-indices, which can be compared with  formulas by Suwa \cite[Proposition 5.1]{Suwa2014} and Gómez-Mont \cite{Gom}. This partially addresses Suwa's question in \cite[Remark 5.3.(6)]{Suwa2014}. As
an application, one has  that the non-negativity of the GSV-index gives 
an obstruction to the solution of the Poincar\'e problem for Pfaff systems, in concordance with Brunella’s observation for  holomorphic foliations on surfaces. This further yields to  a
bound for the Poincaré problem similar to those previously established by Esteves and Cruz \cite{EstCruz} and by Corrêa and Jardim \cite{CJ}. Related results were previously obtained in  \cite{CLins, SoaresPoinc1, CavalierLehmann2006, BruMe}. In particular, this 
recovers Soares' bound in the case where the invariant hypersurface $V$ is smooth.

     In section  \ref{ss. Suwa} we recall Suwa’s  interpretation in  \cite{Suwa2014}  of the GSV and virtual indices for vector fields as residues on complete intersection varieties with isolated singularities, and the fact that if   $V = \{ f_1 = \dots = f_k = 0 \}$ is a local complete intersection in $\C^n$, with isolated singularity at $0$,  invariant by a foliation by curves $\F$ singular at most at $0$,  then the index is a localization of the top Chern class:
\[
\mathrm{GSV}(\F,V, 0) = \operatorname{Res}_{c_n}(\F, T U|_V ; 0) = \mathrm{Vir}(\F,V, 0). 
\]

   Section  \ref{Log-Alex} recalls  Aleksandrov’s logarithmic  index  $\operatorname{{Ind}_{\log}}(\mathcal{F}, D, p) $ defined in \cite{Alex}, inspired by the work of G\'omez-Mont, that measures the variation between the Poincar\'e--Hopf and  the homological indices. Then, using this index,  we address (following \cite{CorreaMachado2019, CorreaMachado2024})  the problem of establishing a global logarithmic residue theorem for meromorphic vector fields on compact complex manifolds.

    One has that if 
    $\mathcal{F}$ is a one-dimensional foliation with isolated singularities and logarithmic along  a normal crossings divisor $D$ in a compact complex manifold $X$. Then
$$
\int_{X} c_{n}(T_{X}(-\log D) - T\mathcal{F}) = \sum_{x \in \operatorname{Sing}(\mathcal{F}) \cap (X \setminus D)} \mu_x(\mathcal{F}) + \sum_{x \in \operatorname{Sing}(\mathcal{F}) \cap D} \operatorname{Ind}_{\log}(\mathcal{F}, D, x),
$$
where  $\mu_x(\mathcal{F})$ is the Milnor number of $\mathcal{F}$ at $x$. As an application, in \cite{CorreaMachado2019}, the authors prove that if $\mathcal{F}$  is  a one-dimensional foliation on $\mathbb{P}^n$, with $n$ odd, and  isolated singularities  (non-degenerate)  along a  smooth divisor $D$, then  $\mbox{Sing}(\F)\subset D$ if and only if  Soares’ bound \cite{SoaresPoinc1} for
the Poincar\'e problem is achieved, i.e, if and only if
$$
\deg(D)=\deg(\F)+1.
$$
Recently, in  \cite{CLM2024}, a logarithmic version of the Baum--Bott theorem was obtained for more general symmetric polynomials.

In the last section we briefly discuss relations with the adjunction formula and with other invariants, such as the Milnor and Tjurina  numbers for foliations.

\section{Indices of vector fields on  isolated complete intersection  singularities}

  Throughout this work, by 
  a
complete intersection singularity   in $\C^{n+k}$ we mean the germ (say at $0$) of a geometric --non necessarily algebraic-- complete intersection $(V,0)$, defined by as many holomorphic equations as its codimension, say $k$. If 
$V$  is defined by 
$$ f = (f_1,..., f_k)  \, \colon \, (\B_\e,0)  \longrightarrow \, ({\C^k},0) \, ,
$$
and  the critical set of $f$ meets $V$  only at $0$, then we say that $(V,0)$ is an isolated complete intersection singularity, an ICIS for short.

A vector field $v$ on $V$ is a continuous section of the bundle $T\C^{n+1}|_V$  that vanishes at $0$ and 
  for each $x \in V^* = V \setminus 0$ the vector $v(x)$ is tangent to $V^*$. The vector field is holomorphic if the corresponding section is holomorphic. Notice that $v$ must vanish at $0$ because this is an isolated singularity of $V$. 

Notice that for each $x \in V^* = V
\setminus \{0\}$, the tangent space $T_xV^*$ consists of all
vectors in $T_x\C^{n+1}$ which are mapped to $0$ by the differential of all the $f_i$: 
$$T_xV^* \,=\,\{ \zeta \in T_x\C^{n+1}\,\big|\, df_i(x)(\zeta) = 0 \,, \quad \forall \; i=1,...,k\, \}\,.$$

\begin{example}\label{hamilton}
If $V$ is defined in $\C^{2n}$ by a map $f:(\C^{2n},0)
\to (\C,0)$  and we set $V= f^{-1}(0)$, then the Hamiltonian vector field $$\widetilde
\zeta(z_1, ..., z_{2n}) = \Big(- \frac{\partial \,f}{\partial z_2},
\frac{\partial\,f}{\partial z_1}, ...,- \frac{\partial \,f}{\partial z_{2n}}, \frac{\partial \,f}{\partial z_{2n-1}} \Big)$$
 is tangent to $V$ and it is
zero only at the origin. Notice that this vector field is actually
tangent to all the fibers $f^{-1}(t)$. \end{example}

\begin{example}\label{homogeneous}
Now let $V$ be defined by the  homogeneous polynomial   map $f:(\C^{n},0)
\to (\C,0)$ defined by  $(z_1,...,z_n) \mapsto z_1^k + ... + z_n^k$ for $k \ge 2$. Then the radial vector field $(z_1,...,z_n) = (z_1,...,z_n)$ is tangent to $V$ and we will see later that, unlike the previous example, if $k \ge 3$ this can never be extended to the ambient space with an isolated singularity at $0$ and being tangent to the fibers $f^{-1}(t)$. The case $k=2$ is special (see Example \ref{ex. homogeneous2}.
\end{example}

\subsection{The GSV index}\index{GSV index !}

Given an  ICIS $(V,0)$ as above, in ${\C}^{n+k}$, defined by $ f = (f_1,..., f_k) $
we have that the 
(complex conjugate) gradient vector fields  $\, \overline{\nabla} f _1, ..., \,\overline{\nabla}f_k$ are linearly independent  everywhere and normal
to $V$ (for the usual hermitian metric in ${\C}^{n+k}$).  Hence if   $v$
is  a continuous vector field on $V$ singular only at $0$, then the set
$ \, \{v(z), \overline{\nabla} f _1, \dots , \,\overline{\nabla}f_k)\} \,$ is a $(k+1)$-frame at each point in $V^* :=
V\setminus \{0\}$, and up to homotopy it can be assumed to be
orthonormal. Hence these vector fields define a
continuous map from $V^*$ into the Stiefel manifold of complex
orthonormal $(k+1)$-frames in ${\C}^{n+k}$, denoted
$W_{k}(n+k)$.

Let $\,\mK = V \cap \s_{\varepsilon} \, $ be the link of $0$ in
$V$. It is an oriented, real manifold of dimension $(2n-1)$ and
the above frame defines a continuous map
$$ \phi_v \, = \, (v,\overline{\nabla} f _1, ..., \,\overline{\nabla}f_k) \,
\colon \, \mK \, \longrightarrow W_{k+1}(n+k) \, .$$
The Stiefel manifold $W_{k}(n+k)$ is diffeomorphic to the
homogeneous space $U(n+k)/U(n)$ and therefore  the homotopy
sequence associated to this fibration implies that
 $W_{k}(n+k)$ is  $(2n-2)$-connected,  while its homology in dimension $(2n-1)$  is
  isomorphic to $\Z$. Hence, if $\mK$ is connected, 
the map $ \phi_v$ has a well defined degree $\deg(\phi_v) \in \Z$,
defined by means of the induced homomorphism $H_{2n-1}(\mK) \to
H_{2n-1}(W_{2}(n+1))$ in the usual way.

\begin{definition}\index{GSV index !}\label{def GSV} Assume $n \ge 2$ or $n=1$ and $V$ is irreducible.
Then {\bf the  GSV index} of $v$ at $0 \in V$, $\ind_{\rm GSV}(v;V,0)$,  is the
degree of the above map $\phi_v$.
\end{definition}

\begin{figure}
	\centering
\quad \includegraphics[height=5cm ]{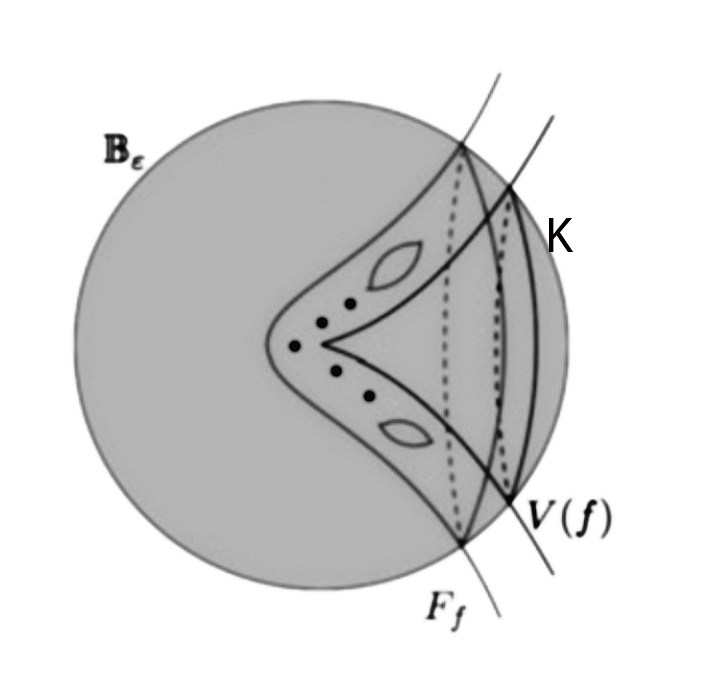}
	\caption{The  GSV equals the PH index  in a Milnor fiber.}
\end{figure}

\begin{remark} Note  that the link $\mK$ is always connected if $n \ge 2$. For $n=1$ the link has one connected component for each irreducible component of the plane curve $V$ and in this case one needs to take into account the intersection numbers of the branches. We refer to \cite[3.2]{BSS} and to Section \ref{S. Brunella} for more about this case, which is specially important for the theory of holomorphic foliations.  
\end{remark}

We recall that one has a Milnor   fibration
associated to the map $f$, see \cite {Mi1, Ham, Loo}, and the
Milnor fiber $F$ can be regarded as a compact $2n$-manifold with
boundary $\partial F = \mK$.  By the Ehresmann Fibration Lemma there is an ambient
isotopy of the sphere $\s_{\varepsilon}$ taking $\mK$ into the boundary
$\partial F$, which can be extended to a collar of $\mK$, which
goes into a collar of $\partial F$ in $F$.  Hence $v$ can be
regarded as a non-singular vector field  on $\partial F$.  

We also recall that the Milnor fiber $F$ has the homotopy type of a bouquet of spheres of middle dimension and therefore its Beti numbers are all $0$ except $b_0 = 1$ and $b_n = \mu$ for some $\mu \ge 0$

\begin{definition}\index{Milnor number !}\label{def Milnor number}
The number $b_n = \mu$ is {\bf the Milnor number }of $V$ at $0$.
\end{definition}

\begin{proposition}\label{GSV} The $\mathrm{GSV}$ index has the following properties:

\vv

\noindent
{\rm  (1)}  The $\mathrm{GSV}$ index of $v$ at $0$ equals the total 
Poincar\'e-Hopf index of $v$ in the Milnor fiber:  
$$\mathrm{GSV}(v;V,0) = PH(v;F) \, .$$

\vv

\noindent
{\rm (2)}  If $v$ is everywhere transverse to $\mK$, then
$$ \mathrm{GSV}(v;V,0) = 1 +(-1)^n \mu \, ,$$\index{Milnor number !}
where $n$ is the complex dimension of $V$ and $\mu$ is the Milnor
number.
\end{proposition}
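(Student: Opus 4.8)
The plan is to treat part (1) as the topological core and to derive part (2) from (1) together with the Poincaré--Hopf theorem for manifolds with boundary. For (1) the idea is to transport $v$ from the link $\mK$ to the boundary $\partial F$ of the Milnor fiber by means of the Ehresmann isotopy already invoked above, and then to show that the degree of $\phi_v$ equals the obstruction to extending $v$ as a nonvanishing section of $TF$ over all of $F$, which is by definition the total Poincaré--Hopf index $PH(v;F)$.

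First I would set up the forgetful fibration
$$ \s^{2n-1} \longrightarrow W_{k+1}(n+k) \stackrel{\pi}{\longrightarrow} W_k(n+k) $$
obtained by dropping the first vector of a frame, whose fiber over $(\overline{\nabla}f_1,\dots,\overline{\nabla}f_k)$ is the unit sphere of the orthogonal complement of the normal frame, a copy of $\s^{2n-1}$. The composite $\pi\circ\phi_v$ is the gradient-frame map $x\mapsto(\overline{\nabla}f_1(x),\dots,\overline{\nabla}f_k(x))$ into $W_k(n+k)=U(n+k)/U(n)$. Since this space is $2n$-connected while $\dim\mK=2n-1$, the gradient-frame map is null-homotopic; hence, by the homotopy lifting property, $\phi_v$ is homotopic to a map landing in a single fiber $\s^{2n-1}$. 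Because the base is $2n$-connected, the fiber inclusion induces an isomorphism $H_{2n-1}(\s^{2n-1})\cong H_{2n-1}(W_{k+1}(n+k))$ (as one reads off from the Serre spectral sequence), so $\deg\phi_v$ equals the degree of the resulting map $g\colon\mK\to\s^{2n-1}$.

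It then remains to identify $g$ and its degree. After the null-homotopy the normal frame is constant, so $g$ is the normalized orthogonal projection of $v$ onto the fixed complex $n$-plane $T_xF=(\mathrm{span}\,\overline{\nabla}f_i)^\perp$; since $v$ is tangent to $V^*$, and hence after transport is a nonvanishing section of $TF$ along $\partial F$, the map $g$ is simply $v/|v|$ read in a trivialization of $TF|_{\partial F}$. Thus $\deg g$ is the relative Euler number of $TF$ with respect to the boundary section $v$, which equals the total index of any extension of $v$ to the interior of $F$ with isolated zeros, namely $PH(v;F)$; this proves (1). For (2), transversality of $v$ to $\mK$ means, after transport, that $v$ points strictly inward or strictly outward along $\partial F$, so the Poincaré--Hopf theorem for manifolds with boundary gives $PH(v;F)=\chi(F)$ in the outward case and $PH(v;F)=\chi(F)-\chi(\partial F)$ in the inward case. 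Since $\partial F=\mK$ is a closed manifold of odd real dimension $2n-1$, we have $\chi(\mK)=0$, so both cases yield $PH(v;F)=\chi(F)$. Finally, the homotopy type of $F$ as a bouquet of $\mu$ spheres $\s^{n}$ recorded above gives $\chi(F)=1+(-1)^n\mu$, which completes (2).

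The main obstacle will be the identification carried out in the third step: justifying that, under the Ehresmann isotopy carrying $\mK$ to $\partial F$ and after the null-homotopy straightening the gradient frame, the transported $v$ genuinely represents a nonvanishing section of $TF$ whose boundary degree is the relative Euler number. This requires care in tracking the trivialization of $TF|_{\partial F}$ and in verifying that the projection of $v$ onto $TF$ stays nonzero, which holds precisely because $v$ completes the normal frame to a full $(k+1)$-frame. The case $n=1$ must be treated separately, since $\mK$ is then disconnected and the degree has to be computed component by component, weighted by the branch intersection numbers, as noted in the remark above.
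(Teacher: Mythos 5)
Your argument is correct and is essentially the standard proof that the paper defers to (\cite[3.2]{BSS}): part (1) by identifying $\deg\phi_v$ with the obstruction to extending $v$ as a nonvanishing section of $TF$, and part (2) from (1) via Poincar\'e--Hopf for manifolds with boundary together with $\chi(F)=1+(-1)^n\mu$ and $\chi(\partial F)=0$. The one step you flag as delicate closes exactly along the lines you set up: the gradient-frame map is null-homotopic not just on $\mK$ but on all of $F$ (since $F$ retracts onto an $n$-complex while the Stiefel manifold of $k$-frames in $\C^{n+k}$ is $2n$-connected), so choosing the straightening homotopy over $F$ and restricting to the boundary shows that the trivialization of $TF|_{\partial F}$ you use extends to a trivialization of $TF$ over all of $F$; hence the degree of $v/|v|$ read in it really is the relative Euler number $PH(v;F)$, with no correction term from a change of trivialization. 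Your worry about the tangential projection of $v$ vanishing does not arise, because the null-homotopy is a homotopy through orthonormal $(k{+}1)$-frames, so the first vector remains a unit vector in the orthogonal complement of the (now constant) normal frame throughout.
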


The proof is easy and we refer to \cite[3.2]{BSS} for details. 

We notice that we know from \cite {Mi1} that if $V$ is a hypersurface germ defined by a map $f$, then its Milnor number $\mu $ is the Poincaré-Hopf local index of the gradient vector field $\nabla f$, and therefore it equals the intersection number,
\begin{equation}\label{alg Milnor number}\index{Milnor number !}
 \mu = {\rm dim}_\C \; \frac{{\mathcal O}_{\C^{n+1},0}}{\big(\frac{\partial f}{\partial z_0},..., \frac{\partial f}{\partial z_n}\big)} \;.
\end{equation}
For complete intersections the analogous expression is given by the Lê-Greuel formula \cite{Le7, Gr}. 

\begin{example}\label{ex. homogeneous2}
Consider  the radial vector field $v_{rad}$ in Example \ref{homogeneous} for $k=2$. The statement (2) in the  theorem above implies:
$$ \mathrm{GSV}(v_{rad};V,0) = 1 +(-1)^n \mu  = 1 +(-1)^n \,.\
$$
Hence if $n$ is odd, then the index is $0$ and it is an exercise to show that in this case the vector field can be extended continuously to the Milnor fibers with no singularities.
\end{example}

 When the vector field in question is holomorphic, the GSV-index has other interesting properties. We now discuss some  of these. 
 To begin, recall that if $\zeta = (\zeta_1,...,\zeta_n)$ is a holomorphic vector field in $\C^n$ with an isolated singularity at $0$, then its local Poincaré-Hopf index is the intersection number:
 \begin{equation}\label{PH-holomorphic}
\mathrm{PH}(v;0) \,= \, {\rm dim}_\C \; \frac{{\mathcal O}_{\C^{n+1},0}}{(\zeta_1,...,\zeta_n)} 
 \end{equation}
 
 Recall that if $\zeta$ is a holomorphic vector field on a complex variety  $Z$ (for simplicity, say with isolated singularities), then the 
 solutions of $\zeta$ are Riemann surfaces immersed in $X$, and one has an associated 1-dimensional holomorphic foliation  with singularities at the zeroes of $\zeta$.  

\begin{definition}\index{Topologically equivalent}
 Let  $(V_i,0)$, $i=1,2$, be  germs of isolated  hypersurface  singularities defined by holomorphic map-germs 
$(\C^{n+1},0)   \buildrel {f_i} \over {\to} (\C,0)$,  
 and 
let  $v_1$, $v_2$ be germs of holomorphic vector fields on the $V_i$ with an isolated singularity at $0$.  
Then we say that:
\begin{enumerate}
\item 
The germs $(V_1,0)$  and $(V_2, 0)$ are 
{\bf topologically equivalent}  if there exists an orientation preserving local
homeomorphism $h$ of $\C^{n+1}$, such that $f_2 = f_1 \circ h$. In this case we say that $h$ is a topological equivalence between $V_1$ and $V_2$. 
\item The holomorphic vector fields $v_1$, $v_2$ are {\bf topologically equivalent} if there exists a topological equivalence between $V_1$ and $V_2$ carrying the leaves of the foliation $\mathcal F_1$ of $v_1$ into the leaves  corresponding to $v_2$.
\end{enumerate}
\end{definition}

One has  \cite[Theorem 4.5]{GSV1}:  

\begin{theorem}\label{thm top invariance}
Let $(V,0)$  denote the germ of an isolated hypersurface singularity in
$\C^{n+1}$. Then, the $\mathrm{GSV}$-index of a holomorphic vector field   is a topological invariant. That is, if $\zeta$ is a holomorphic vector field on $(V,0)$ and it is topologically equivalent to another holomorphic vector field $\zeta’$  on some other hypersurface germ, then $\zeta$ and $\zeta’$ have the same $\mathrm{GSV}$-index.  
\end{theorem}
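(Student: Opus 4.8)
The plan is to show that the $\mathrm{GSV}$ index is really an invariant of the foliation by curves defined by the vector field together with the oriented topological type of the Milnor fibre, and that a topological equivalence preserves both. I would begin with two reductions. First, by Proposition \ref{GSV}(1) one has $\mathrm{GSV}(\zeta;V,0)=PH(\zeta;F)$, the total Poincar\'e--Hopf index obtained by extending the nowhere-zero field $\zeta|_{\partial F}$ over the Milnor fibre $F$; this total index is precisely the primary obstruction to extending $\zeta|_{\partial F}$ as a non-vanishing field over the $2n$-manifold $F$, an element of $H^{2n}(F,\partial F;\Z)\cong\Z$. Second, I would observe that this index depends only on the foliation $\mathcal{F}$ determined by $\zeta$ on $V^{*}$, and not on the chosen generator: two holomorphic vector fields with an isolated singularity at $0$ inducing the same $\mathcal{F}$ differ on $V^{*}$ by a factor $g$ which, for $n\ge 2$, extends across $0$ to a unit of $\mathcal{O}_{V,0}$ (its zero set would otherwise be a positive-dimensional subset of $V^{*}$); being non-vanishing on a whole Milnor ball, $g$ restricts to a null-homotopic map $\mK\to\C^{*}$, so $g\zeta$ and $\zeta$ are homotopic through non-vanishing fields on $\mK\simeq\partial F$ and share the same index.

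Next, from the topological equivalence $h$ with $f_{2}=f_{1}\circ h$, I would use that $h$ carries each fibre $f_{2}^{-1}(t)$ onto $f_{1}^{-1}(t)$; passing to Milnor representatives and invoking the topological invariance of the Milnor fibration, this yields an orientation-preserving homeomorphism of pairs $\Phi\colon(F_{2},\partial F_{2})\to(F_{1},\partial F_{1})$, orientation-preserving because $h$ preserves the orientations of $\C^{n+1}$ and of the complex fibres. On the links $\partial F_{i}\cong\mK_{i}\subset V_{i}^{*}$, the hypothesis that $h$ matches the leaves of $\mathcal{F}_{1}$ with those of $\mathcal{F}_{2}$ means exactly that $\Phi$ intertwines the two foliations, and hence their complex tangent line fields along the links. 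By the first paragraph each $\mathrm{GSV}$ index may be computed from these line fields alone.

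I would then conclude by naturality of the obstruction class. An orientation-preserving homeomorphism of pairs $(F_{2},\partial F_{2})\to(F_{1},\partial F_{1})$ that carries the boundary foliation of $\mathcal{F}_{2}$ to that of $\mathcal{F}_{1}$ induces an isomorphism $H^{2n}(F_{1},\partial F_{1};\Z)\to H^{2n}(F_{2},\partial F_{2};\Z)$ sending the obstruction class of the first to that of the second; thus $PH(\zeta_{1};F_{1})=PH(\zeta_{2};F_{2})$ and the two $\mathrm{GSV}$ indices coincide.

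The main obstacle I anticipate is the second step: upgrading the germ-level homeomorphism $h$, which a priori controls only the zero fibre $V$ and the leaf structure on $V^{*}$, to a genuine homeomorphism of Milnor fibres that is simultaneously compatible with the foliations near the boundary. One must arrange $h$, up to isotopy, to respect a compatible system of Milnor representatives and the collar of $\partial F$, so that the line field transported onto $\partial F_{1}$ is honestly homotopic through non-vanishing fields to the one cut out by $\zeta_{1}$. Reconciling the purely topological data carried by $h$ with the analytic definition of the index through the Stiefel map $\phi_{v}$ is the technical heart of \cite[Theorem 4.5]{GSV1}.
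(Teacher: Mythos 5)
The paper does not prove this statement: it is quoted verbatim from \cite[Theorem 4.5]{GSV1}, so your proposal has to be judged on its own terms rather than against an argument in the text. Your preliminary reductions are sound in the intended range $n\ge 2$: by Proposition \ref{GSV}(1) the index is the obstruction in $H^{2n}(F,\partial F;\Z)$ to extending $\zeta|_{\partial F}$ as a non-vanishing section of $TF$; two holomorphic generators of the same foliation differ by a unit of $\O_{V,0}$ (normality of an isolated hypersurface singularity for $n\ge2$), and a unit on the contractible set $V\cap\B_\e$ is null-homotopic into $\C^*$, so the index depends only on the line field $T\mathcal{F}|_{\mK}$; and $f_2=f_1\circ h$ does carry fibres to fibres, giving an orientation-preserving homeomorphism of pairs of Milnor fibres modulo the standard independence of the fibration from the choice of Milnor ball. (For $n=1$ both reductions degenerate: the link is a circle so the line field no longer determines the section up to homotopy, and the foliation of a one-dimensional $V^*$ by orbits carries no information at all, so the statement must be read with $n\ge2$ or with the ambient foliation.)

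The genuine gap is the step you flag in your last paragraph, and it cannot be dismissed as the "technical heart" to be arranged later: it \emph{is} the theorem. A topological equivalence is only a homeomorphism carrying leaves to leaves; it has no derivative, so it does not act on tangent vectors, and the assertion that $\Phi$ "intertwines the two foliations, and hence their complex tangent line fields along the links" is a non sequitur. For the concluding appeal to naturality of the obstruction class you would need two things that a homeomorphism does not supply: (i) an isomorphism of vector bundles $\Phi^*TF_1\cong TF_2$ over $F_2$ compatible with the identification $H^{2n}(F_1,\partial F_1;\Z)\to H^{2n}(F_2,\partial F_2;\Z)$ (a homeomorphism gives at best an equivalence of topological microbundles, not an identification with the holomorphic tangent bundles carrying the relevant sections), and (ii) the statement that under such an identification $\Phi^*\bigl(\zeta_1|_{\partial F_1}\bigr)$ is homotopic through non-vanishing sections to $\zeta_2|_{\partial F_2}$. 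Neither follows from the purely set-theoretic datum of the partition of $V^*$ into leaves; recovering the homotopy class of the tangent line field on the link from that datum is exactly the content of Theorem A of \cite{CLS} and of \cite[Theorem 4.5]{GSV1}, which the present paper deduces as a \emph{corollary} of the statement you are trying to prove. As written, the proposal is a correct reduction of the theorem to its hardest step, followed by an assertion of that step.
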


 We recall that if $\mathcal F$ is a 1-dimensional holomorphic foliation on a complex manifold, then $\mathcal F$ is locally free and therefore 
if  $p$ is an isolated singularity of the foliation, then $\mathcal F$ is locally generated by a holomorphic vector field. In analogy with equation \ref{alg Milnor number} one has:

\begin{definition}\index{Milnor number ! of a foliation}\label{Milnor number of a foliation}
Let $\zeta = (\zeta_1,...,\zeta_n)$ be a holomorphic vector field tanget to   $\mathcal F$ in a neighborhood of the isolated singularity $p$.  
Then  the  {\bf Milnor number} of   $\mathcal F$ at $p$ is the Poincaré-Hopf local index of $\zeta$ at $p$, so it equals:
$$\mu(\mathcal F,p) := {\rm dim}_\C \; \frac{{\mathcal O}_{\C^{n+1},0}}{(\zeta_1,...,\zeta_n)} \;.
$$
\end{definition}

It is easy to prove (see \cite{GSV1}) that this invariant equals the PH-local index of any continuous vector field tangent to  $\mathcal F$ near $p$. One  gets the following  corollary to Theorem \ref{thm top invariance}, which is Theorem A in \cite{CLS}.

\begin{theorem}\label{top inv mu}
 The Milnor number of an isolated singularity of a 1-dimensional holomorphic foliation on a smooth complex manifold, is a topological invariant. 
\end{theorem}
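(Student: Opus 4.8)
The plan is to realize the foliation Milnor number of Definition \ref{Milnor number of a foliation} as a GSV-index on a smooth germ, and then invoke the topological invariance of the GSV-index established in Theorem \ref{thm top invariance}. Since the statement is local, I first work in a holomorphic chart identifying a neighborhood of the singular point with $(\C^{n},0)$, in which $\F$ is generated by a holomorphic vector field $\zeta=(\zeta_1,\dots,\zeta_n)$ having $0$ as its only zero; by definition $\mu(\F,p)=\dim_\C \O_{\C^{n},0}/(\zeta_1,\dots,\zeta_n)$, which is the Poincaré--Hopf index $\mathrm{PH}(\zeta;0)$.

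Next I would identify this number with a GSV-index. Embed $(\C^{n},0)$ as the smooth hypersurface germ $V=\{z_{n+1}=0\}\subset(\C^{n+1},0)$, defined by $f=z_{n+1}$, and regard $\zeta$ as a holomorphic vector field on $V$ singular only at $0$. The Milnor fibre of $f$ is a $2n$-ball whose boundary is the link $\mK=V\cap\s_{\e}\cong\s^{2n-1}$, so by Proposition \ref{GSV}(1) the GSV-index equals the total Poincaré--Hopf index of $\zeta$ on this fibre; since the fibre is contractible, that total index is the degree of $\zeta|_{\mK}\colon\s^{2n-1}\to\s^{2n-1}$, namely $\mathrm{PH}(\zeta;0)$. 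Hence $\mathrm{GSV}(\zeta;V,0)=\mu(\F,p)$, i.e. the GSV-index of $\zeta$ on the smooth germ $V$ is exactly the foliation Milnor number.

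It then remains to transfer a topological equivalence of foliations into the equivalence of vector fields required by Theorem \ref{thm top invariance}. Suppose $\F_1,\F_2$ are generated by $\zeta_1,\zeta_2$ on $(\C^{n},0)$ and that $\phi$ is a germ of homeomorphism carrying the leaves of $\F_1$ to those of $\F_2$. Define $h=\phi\times\mathrm{id}$ on $(\C^{n+1},0)$; then $z_{n+1}\circ h=z_{n+1}$, so $h$ is a topological equivalence of the (smooth) hypersurface germs $V_1=V_2=V$ in the sense of the definition preceding Theorem \ref{thm top invariance}, and $h|_{V}=\phi$ sends leaves of $\F_1$ to leaves of $\F_2$. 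Thus $\zeta_1$ and $\zeta_2$ are topologically equivalent holomorphic vector fields, and Theorem \ref{thm top invariance} gives $\mathrm{GSV}(\zeta_1;V,0)=\mathrm{GSV}(\zeta_2;V,0)$, that is $\mu(\F_1,p_1)=\mu(\F_2,p_2)$.

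The step I expect to require the most care is the identification $\mathrm{GSV}(\zeta;V,0)=\mathrm{PH}(\zeta;0)$ on the smooth germ, together with checking that the product homeomorphism $h$ genuinely satisfies every hypothesis of the topological-equivalence definition. In particular one must verify that $h$ is orientation preserving, which follows from $\phi$ respecting the canonical complex orientation of the leaves while $h$ fixes the $z_{n+1}$-coordinate, and that $h$ indeed maps leaves to leaves. Once these points are settled the invariance is immediate from Theorem \ref{thm top invariance}.
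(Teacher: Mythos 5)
Your argument is correct and follows the paper's route: the paper states this result as a direct corollary of Theorem \ref{thm top invariance}, and your proof simply makes explicit the intended derivation, namely realizing $\mu(\F,p)$ as the GSV-index of the generating vector field on the smooth hypersurface germ $\{z_{n+1}=0\}\subset(\C^{n+1},0)$ and promoting the leaf-preserving homeomorphism to a topological equivalence of hypersurface germs. The only point deserving a touch more care is the orientation-preservation of $h=\phi\times\mathrm{id}$, but since the local Poincaré--Hopf index of a holomorphic vector field is positive, the conclusion is unaffected.
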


We now remark that, as noticed in \cite{B-GM}, given an isolated complex singularity germ $(V,0)$ in some $\C^m$ and a holomorphic vector field $\zeta$ on  $V$ with an isolated singularity at $0$, there are infinitely many extensions of 
$\zeta$ to a holomorphic vector field in a neighborhood of $0$ in $\C^{m}$. 
Yet, as noticed in \cite [Section 3]{GSV1}, if $(V,0)$ is defined by a single equation $f$
 in $\C^m$, and if   $\zeta$ is a holomorphic vector field on $V$ with an isolated singularity at  $0$, then an extension $\widetilde \zeta$ to the ambient space can have a singular locus of dimension at most $1$, because cutting down by a hypersurface can reduce the dimension at most by $1$. 

One has the following Theorem 3.2 from \cite {GSV1}.

\begin{theorem}\label{tangent to fibers}
Let $(V,0)$  denote a hypersurface singularity germ in
$\C^{n+1}$ defined by a function $f$ with at most an isolated singularity at $0$ (which could be a smooth point in $V$). Let $\zeta$ be a holomorphic vector field on $V$ with an isolated singularity at $0$. Suppose there exists a holomorphic extension $\widetilde \zeta$ of $\zeta$ to a neighborhood $U$ of $0$ in $\C^{n+1}$ such that $\widetilde \zeta$ is tangent to the fibers of the map $f$. Let $\Sigma$ be the zero-locus of $\widetilde \zeta$, which has dimension at most $1$. Then:

\begin{enumerate}
\item The $\mathrm{GSV}$-index of $\zeta$ in $V$ is the intersection number of  a Milnor fiber \\ $F_t = f^{-1}(t) \cap U$ and $\Sigma$. That is:
$$\mathrm{GSV} (\zeta;V,0) \, = \, \sum_{p \in U} \, {\rm dim}_\C \,
 \frac{{\mathcal O}_{\C^{n+1},p}}{(\widetilde \zeta_0,...,\widetilde \zeta_n, f-t)} \;,
$$
where the $\widetilde \zeta_0$ are the components of $\widetilde \zeta$. Hence, in this case the GSV index is non-negative and it is $0$ if and only if  $\widetilde \zeta$ has an isolated singularity at $0$. 

\item Localized at $0$ the formula becomes:
$$\mathrm{GSV}(\zeta;V,0) \, = \,  {\rm dim}_\C \,
 \frac{{\mathcal O}_{\C^{n+1},0}}{(\widetilde \zeta_0,...,\widetilde \zeta_n, f)} \; - \;  {\rm dim}_\C \,  \frac{(f) \cap (\widetilde \zeta_0,...,\widetilde \zeta_n, f)} {(f\,\widetilde \zeta_0,..., f \,\widetilde \zeta_n)}  \;.
$$

\end{enumerate}
\end{theorem}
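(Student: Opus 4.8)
The plan is to compute the GSV index on a Milnor fibre using the extension $\widetilde\zeta$, and then to transfer the answer from the generic fibre $\{f=t\}$ back to the central fibre $\{f=0\}$ by a conservation-of-number argument. For part (1), I would start from Proposition \ref{GSV}\,(1), which gives $\mathrm{GSV}(\zeta;V,0)=\mathrm{PH}(\zeta;F)$ on the Milnor fibre $F=F_t$. Since $\widetilde\zeta$ is tangent to the fibres of $f$, its restriction $\widetilde\zeta|_{F_t}$ is a holomorphic vector field on the smooth $n$-dimensional manifold $F_t$ ($t\neq0$ small), with zero set $\Sigma\cap F_t$; as $\dim\Sigma\le1$ this set is finite, and for $\e$ and $t$ small it lies in the interior of $F_t$. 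The Ehresmann isotopy already used to define the GSV index makes the non-vanishing fields $\zeta|_{\partial F_0}$ and $\widetilde\zeta|_{\partial F_t}$ homotopic, so $\mathrm{PH}(\zeta;F)=\mathrm{PH}(\widetilde\zeta|_{F_t};F_t)=\sum_{p\in\Sigma\cap F_t}\mathrm{ind}_p(\widetilde\zeta|_{F_t})$. To identify each local index I would note that at such a $p$ the fibre $F_t$ is smooth, so after relabelling $\partial f/\partial z_0(p)\neq0$ and $z_1,\dots,z_n$ are local coordinates on $F_t$, in which $\widetilde\zeta|_{F_t}$ has components $\widetilde\zeta_1|_{F_t},\dots,\widetilde\zeta_n|_{F_t}$. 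The tangency relation $\sum_i\widetilde\zeta_i\,\partial f/\partial z_i=0$ shows $\widetilde\zeta_0\in(\widetilde\zeta_1,\dots,\widetilde\zeta_n)$ modulo $(f-t)$, since $\partial f/\partial z_0$ is then a unit; hence by (\ref{PH-holomorphic}) applied on $F_t$, together with $\mathcal{O}_{F_t,p}=\mathcal{O}_{\C^{n+1},p}/(f-t)$,
$$\mathrm{ind}_p(\widetilde\zeta|_{F_t})=\dim_\C\frac{\mathcal{O}_{\C^{n+1},p}}{(\widetilde\zeta_0,\dots,\widetilde\zeta_n,f-t)}\,.$$
Summing gives the formula; non-negativity is automatic, and the index vanishes exactly when $\Sigma\cap F_t=\emptyset$, i.e. when $\dim\Sigma=0$, i.e. when $\widetilde\zeta$ is singular only at $0$.

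For part (2) I would set $I=(\widetilde\zeta_0,\dots,\widetilde\zeta_n)$ and $B=\mathcal{O}_{\C^{n+1},0}/I$, the local ring of $\Sigma$ at $0$. Because $\zeta=\widetilde\zeta|_V$ is singular only at $0$, one has $\Sigma\cap V=\{0\}$, so $B/fB=\mathcal{O}/(I+(f))$ is finite-dimensional; by the analytic finiteness (Weierstrass) theorem $B$ is then a finite module over the discrete valuation ring $R=\C\{f\}$ and splits as $B\cong R^{\oplus r}\oplus T$, with $T$ the $f$-power torsion submodule. For $t\neq0$ small the torsion is killed, so $\dim_\C B/(f-t)B=r$; decomposing $B/(f-t)B$ over the points of $\Sigma\cap\{f=t\}$ and comparing with part (1) gives $\mathrm{GSV}(\zeta;V,0)=r$. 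On the central fibre $\dim_\C B/fB=r+\dim_\C T/fT$, which produces the first term $\dim_\C\mathcal{O}/(I+(f))$. Finally, multiplication by $f$ identifies $\ker(f\colon B\to B)=(I:f)/I$ with $\big((f)\cap I\big)/fI$, and for the endomorphism $f|_T$ of the finite-dimensional space $T$ one has $\dim_\C\ker(f|_T)=\dim_\C T/fT$; hence this correction term equals $\dim_\C T/fT$. Subtracting yields
$$\mathrm{GSV}(\zeta;V,0)=r=\dim_\C\frac{\mathcal{O}_{\C^{n+1},0}}{(\widetilde\zeta_0,\dots,\widetilde\zeta_n,f)}-\dim_\C\frac{(f)\cap(\widetilde\zeta_0,\dots,\widetilde\zeta_n)}{(f\,\widetilde\zeta_0,\dots,f\,\widetilde\zeta_n)}\,.$$

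The delicate point is part (2): the scheme $\Sigma$ is cut out by $n+1$ functions but has codimension only $n$, so $B$ need not be Cohen--Macaulay and need not be flat over $\C\{f\}$. The correction term measures exactly this failure — the length of the $f$-torsion of $B$, equivalently the jump $\dim_\C B/fB-\dim_\C B/(f-t)B$ between the special and generic fibres — and the main work is organising the conservation-of-number bookkeeping so that this torsion is identified with the displayed quotient of ideals. By contrast, part (1) requires only the manifold-with-boundary Poincaré--Hopf count and the homotopy of boundary data furnished by the Milnor fibration, both of which are routine once the local index has been identified with the intersection multiplicity above.
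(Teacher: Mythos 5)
The paper does not prove this statement; it is quoted from \cite{GSV1}, so your argument can only be measured against the standard one, and it does follow essentially that route: a Poincar\'e--Hopf count on a nearby Milnor fibre for part (1), then a rank-versus-torsion analysis of the finite $\C\{f\}$-module $B=\mathcal{O}_{\C^{n+1},0}/I$, $I=(\widetilde\zeta_0,\dots,\widetilde\zeta_n)$, for part (2). Part (1) is correct as written: the tangency relation together with the unit $\partial f/\partial z_0$ legitimately converts the $n$ components of $\widetilde\zeta|_{F_t}$ into the ideal $(\widetilde\zeta_0,\dots,\widetilde\zeta_n,f-t)$, and the homotopy of boundary data is available because $\Sigma\cap V=\{0\}$ forces $\Sigma$ to miss the tube $\s_\e\cap f^{-1}(D_\delta)$ for $\delta$ small, so $\widetilde\zeta$ is nonvanishing there.

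The one step in part (2) that fails as literally written is the claim that $\dim_\C B/(f-t)B=r$ for $t\neq 0$: since $f(0)=0$, the element $f-t$ takes the nonzero value $-t$ at the origin and is therefore a unit in the local ring $B$, so $B/(f-t)B=0$. The quantity you need is not $B/(f-t)B$ but the fibre over $t$ of the direct image of $\mathcal{O}_\Sigma$ under the finite map $f|_\Sigma$ onto a small disc, namely $\sum_{p\in\Sigma\cap f^{-1}(t)}\dim_\C\mathcal{O}_{\Sigma,p}/(f-t)$, and identifying this sum with $r=\mathrm{rank}_{\C\{f\}}B$ requires knowing that this coherent direct image is locally free of rank $r$ on the punctured disc (its torsion subsheaf being supported at $0$ after shrinking). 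This conservation of multiplicity for the finite map germ $f|_\Sigma$ is standard, but it is precisely the content of the bookkeeping you allude to and must be invoked; the local ring at $0$ modulo $f-t$ does not see the points of $\Sigma$ lying over $t$. Once that is inserted, the rest of your argument is correct: $\dim_\C B/fB=r+\dim_\C T/fT$, and the chain $\bigl((f)\cap I\bigr)/fI\cong (I:f)/I=\ker(f|_B)=\ker(f|_T)$ with $\dim_\C\ker(f|_T)=\dim_\C T/fT$ is valid. Note also that your numerator $(f)\cap(\widetilde\zeta_0,\dots,\widetilde\zeta_n)$ is the right one: the numerator printed in the statement, $(f)\cap(\widetilde\zeta_0,\dots,\widetilde\zeta_n,f)$, equals $(f)$, which would make the correction term $\dim_\C (f)/fI\cong\dim_\C\mathcal{O}_{\C^{n+1},0}/I$, infinite whenever $\dim\Sigma=1$; the printed formula carries a typo and your reading agrees with \cite{GSV1}.
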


Notice that the number ${\rm dim}_\C \,
 \frac{{\mathcal O}_{\C^{n+1},0}}{(\widetilde \zeta_0,...,\widetilde \zeta_n, f)} \; $
 is a reminiscent of the Tjurina number for functions (see Section \ref{s. final}) and can be called the Tjurina number for vector fields. This appears again in the next section,  in Gómez-Mont’s formula for the homological index.
 
For instance, coming back to Example \ref{homogeneous}, the Milnor number is $(k-1)^{n+1}$, by \cite{Mi1}. Hence, if $n$ is odd, then the GSV index of the radial vector field is negative and therefore it can never be holomorphically extended being tangent to the Milnor fibers, not even with singularities. 


\subsection{The homological index}
Theorem \ref{tangent to fibers} gives  an algebraic  formula for the GSV index of holomorphic vector fields under very stringent conditions. Gómez-Mont’s search for an algebraic formula  in the vein of (\ref {PH-holomorphic})  for the GSV index, opened an area of research which is of interest on its own; this all springs from the definition of a homological index in \cite{Gom}. This has given rise to a series of papers by Gómez-Mont and several co-authors
 (see \cite{B-GM, GGM1,GGM2,GGM3, BEG}), including the real analytic setting where it extends to singular hypersurfaces the celebrated index formula of Eisenbud-Levine \cite {EL} and Khimshiashvili \cite{Khi}. There is also a homological index for 1-forms  in \cite{EGS}, and 
Aleksandrov’s logarithmic index  for vector fields and 1-forms \cite{Alex, Alex2} that will be discussed in Section \ref{Log-Alex}. In \cite {Zach1, Zach2} M. Zach uses variations of the homological index to study interesting generalizations of the Milnor number  to the case of holomorphic maps on singular varieties.

The presentation below  is essentially extracted from \cite[Chapter 7]{BSS} and it is a {\it résumé} of part of Gómez-Mont’s work in  \cite{Gom}. 

We  recall that as noted above,  if $\zeta := (\zeta_1,..., \zeta_n)$ is a holomorphic vector field in $\C^n$ with an isolated singularity at $0$, then its Poincaré-Hopf local index is the intersection number:
$$\mathrm{PH}(\zeta; 0) \, = \, {\rm dim}_\C \, \frac{\mathcal O_{\C^n,0}}{(\zeta_1,..., \zeta_n)} \,.
$$
We would like to have a similar expression for the GSV index.  Yet we know from  Example \ref{ex. homogeneous2} that this index can be negative, even for holomorphic vector fields. Therefore  an algebraic expression for the GSV index cannot simply be given by the dimension of some algebra. One may notice that for vector fields in $\C^n$ one has the Koszul complex

\begin{equationth}\label{Koszul}
0 \longrightarrow \Omega^n_{\C^n,0}\longrightarrow \Omega^{n-1}_{\C^n,0}\longrightarrow \cdots\longrightarrow
\O_{\C^n,0}\longrightarrow 0\, ,
\end{equationth}\hskip -1pt
where the arrows are contraction by $\zeta$. The Euler characteristic $\chi$ of this complex is $PH(\zeta; 0)$: this is the clue for defining the homological index, though there is an interesting difference. In the smooth case, all the homology of the Koszul complex vanishes except in top dimension, which contributes to $\chi$  with the PH index. In the singular case one has a similar complex but the homology groups do not vanish. The miracle  is that “almost all of them” have the same dimension, so these cancel down when considering the Euler characteristic  (see \cite {Gom} for the precise statement).

Let $(V,0)\subset ({\C}^m, 0)$ be a germ of a complex analytic
variety of pure dimension $n$, which is regular on $V \setminus
\{0\}$. So $V$ is either regular at $0$ or else it has
  an isolated singular point at the origin.  A vector field $\zeta$
  on $(V,0)$ can always be defined   as the restriction to $V$ of a vector field
  $\widehat v$ in the
ambient space which is tangent to $V \setminus \{0\}$;
   $\zeta$ is holomorphic if $\widehat v$ can be chosen to be holomorphic. So we may write
   $\zeta = (a_1,\cdots, a_m)$ where the $a_i$ are restriction to $V$ of holomorphic functions on a
neighborhood of $0$ in $(\C^m, 0)$.
  
  It is worth noting that given any space $V$ as above, there are always holomorphic vector fields on
  $V$ with an isolated singularity at $0$. This   non-trivial fact is
  a weak form of
 a stronger result (\cite[p. 19]{B-GM}): in the space
$\Theta(V,0)$
  of germs of holomorphic vector fields
  on $V$ at $0$, those having an isolated singularity form a connected, dense open subset
  $\Theta_0(V,0)$.
   Essentially the same result implies also that every $\zeta \in \Theta_0(V,0)$ can be extended
  to a germ of a holomorphic vector field in $\C^m$ with an isolated singularity, though it can also be
 extended with a singular locus of dimension more that $0$, a fact that may be useful for explicit
computations (as in the sequel).

 A (germ of a) holomorphic $j$-form on $V$ at $0$
 means the restriction to $V$ of a holomorphic $j$-form on a neighborhood of
  $0$ in $\C^m$; two such forms in $\C^m$ are equivalent if their restrictions to $V$
  coincide on a neighborhood of $0 \in V$.
  We denote by $\Omega^j_{V,0} $ the space of all such forms (germs); these are the K\"ahler
differential
 forms on $V$ at $0$.
  So $\Omega^0_{V,0} $ is the local structure ring $\O_{(V,0)}$
  of holomorphic functions on $V$ at $0$ and each
  $\Omega^j_{V,0} $ is an $\Omega^0_{V,0} $-module. Notice that if the germ of $V$ at $0$ is
  determined by $(f_1,\cdots,f_k)$ then one has:
 \begin{equationth}\label{differential}
 \Omega^j_{V,0} := \, \Omega^j_{\C^m,0} / ( f_1\Omega^j_{\C^m,0} + df_1 \wedge
 \Omega^{j-1}_{\C^m,0}\,, \dots, \, f_k\Omega^j_{\C^m,0} + df_k \wedge
 \Omega^{j-1}_{\C^m,0}) \;,
 \end{equationth}
 \hskip -4pt
where $d$ is the exterior derivative.

Now, given a holomorphic vector field $\widehat v$ at $0 \in \C^m$
with an isolated singularity at the origin, and a differential
 form $\omega \in \Omega^j_{\C^m,0}$, we can always contract $\omega$ by $\zeta$ in the usual way,
 thus getting a differential  form $i_v(\omega) \in \Omega^{j-1}_{\C^m,0} $. If $\zeta = \widehat v\vert_V$
is tangent to $V$, then contraction is well defined at the level
of differential
 forms on $V$ at $0$ and one gets a
complex $(\Omega^\bullet_{V,0}, v)$:
\begin{equationth}\label{1.1}
0 \longrightarrow \Omega^n_{V,0}\longrightarrow \Omega^{n-1}_{V,0}\longrightarrow \cdots\longrightarrow
\O_{V,0}\longrightarrow 0\, ,
\end{equationth}\hskip -1pt
where the arrows are contraction by $\zeta$ and $n$ is the dimension
of $V$; of course one also has differential
 forms of degree $>n$, but those forms do not play a significant role here.
 We consider the homology groups of this complex:
$$
H_j(\Omega^\bullet_{V,0}, v) = {\rm Ker}\,(\Omega^{j}_{V,0} \to
\Omega^{j-1}_{V,0})/{\rm Im}\,(\Omega^{j+1}_{V,0} \to
\Omega^j_{V,0}) \,.
$$
The first observation in \cite {Gom} is that if $V$ is regular at
$0$, so that its germ at $0$ is
 that of $\C^n$ at the origin,
 and if $\zeta = (a_1,\cdots,a_n)$ has an isolated singularity
at $0$, then this is the usual Koszul complex. In that case,  its homology groups vanish
for $j > 0$, while
$$H_0(\Omega^\bullet_{V,0}, v)  \cong \O_{\C^n,0}\big / (a_1,\cdots,a_n)\,.$$
In particular the complex is exact if $\zeta(0) \ne 0$. Since the
contraction maps are $\O_{V,0}$-module maps, this implies that if
$V$ has an isolated singularity at the origin, then the homology
groups of this complex are concentrated at $0$, and they are
finite dimensional because the sheaves of K\"ahler differentials on $V$
are coherent. Hence it makes sense to define:

\begin{definition}\index{Homological index !}\label{def hom ind}
  The {\it homological index}
$\,{\rm Ind}_{\rm hom}(\zeta,0;V)$ of the holomorphic vector field $\zeta$
on $(V, 0)$ is  the Euler characteristic of the above complex:
\[
{\rm Ind}_{\rm hom}(\zeta,0;V) = \sum_{i=0}^n (-1)^{i}
h_i(\Omega^\bullet_{V,0},v)\,,
\]
where $h_i(\Omega^\bullet_{V,0},v)$ is the dimension of the
corresponding homology group as a vector space over $\C$.
\end{definition}

One has:

\begin{theorem}\label{Th indices GSV-hom}
If the germ $(V,p)$ is a complete intersection, then the $\mathrm{GSV}$ and the homological indices of every holomorphic local vector field on $V$ coincide.
\end{theorem}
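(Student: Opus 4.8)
The plan is to show that both indices coincide with the total Poincar\'e--Hopf index carried by $\zeta$ on a smoothing of $V$. By Proposition~\ref{GSV}(1) this is already true for the $\mathrm{GSV}$ index, which equals $PH(\zeta;F)$ for a Milnor fibre $F=f^{-1}(t)$, so the entire content is to prove the same for the homological index. I would begin with the base case: if $V$ is smooth at $0$, then $\Omega^\bullet_{V,0}$ is the algebra of form germs on $(\C^{n},0)$ and the complex~(\ref{1.1}) is the ordinary Koszul complex of the components of $\zeta$; its homology is concentrated in degree $0$, equal to $\O_{\C^{n},0}/(a_1,\dots,a_n)$ of dimension $PH(\zeta;0)$. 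Hence ${\rm Ind}_{\rm hom}(\zeta,0;V)=PH(\zeta;0)$ at any smooth point, so the theorem reduces to a \emph{conservation-of-number} statement: the homological index at the singular fibre equals the sum of the homological indices over the zeros of $\zeta$ on a nearby smooth fibre.

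Next I would organise the smoothing. Write $V=f^{-1}(0)$ with $f=(f_1,\dots,f_k)\colon(\C^{m},0)\to(\C^{k},0)$, $m=n+k$, and form the flat family $\pi\colon\mathcal V=\{f=t\}\to(\C^{k},0)$ with fibres $V_t=f^{-1}(t)$, the generic one being a smooth Milnor fibre. I would produce a germ $\Xi$ of a holomorphic vector field on the total space that is tangent to every fibre $V_t$ (a section of $\ker df$) and restricts to $\zeta$ on $V_0$, permitting $\Xi$ to carry a singular locus of positive dimension, as already allowed in the discussion preceding Theorem~\ref{tangent to fibers}, but finite over the base. Contraction by $\Xi$ furnishes a relative complex $(\Omega^\bullet_{\mathcal V/\C^{k}},\iota_\Xi)$ of relative K\"ahler differentials whose restriction to the central fibre is exactly the complex~(\ref{1.1}) associated with $\zeta$ on $V$.

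The heart of the argument is a conservation principle for the Euler characteristic of this relative complex, and it is here that the complete-intersection hypothesis enters decisively. Since $V$ is a complete intersection, $\pi$ is flat with Cohen--Macaulay fibres, the sheaves $\Omega^{j}_{\mathcal V/\C^{k}}$ are $\O_{\C^{k}}$-flat, and their formation commutes with restriction to fibres, so that $\Omega^{j}_{\mathcal V/\C^{k}}\big|_{V_t}\cong\Omega^{j}_{V_t}$ for every $t$. The homology of the relative complex is supported on the singular locus of $\Xi$, which is finite over the base; its direct image is therefore coherent and its fibrewise Euler characteristic is locally constant on the parameter space. Evaluating at $t=0$ yields ${\rm Ind}_{\rm hom}(\zeta,0;V)$, while evaluating at a generic $t$ yields $\sum_{p\in V_t}{\rm Ind}_{\rm hom}(\Xi\big|_{V_t},p;V_t)=\sum_{p}PH(\Xi\big|_{V_t};p)$ by the smooth base case applied fibrewise. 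By Proposition~\ref{GSV}(1) this last sum is $PH(\zeta;F)=\mathrm{GSV}(\zeta;V,0)$, giving the desired equality.

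The main obstacle is precisely this conservation step, and I expect two points to demand the most care. First, the existence of a fibre-tangent extension $\Xi$ with singular locus finite over the base, together with the verification that on a generic fibre its homology is concentrated so that the fibrewise count really is the Poincar\'e--Hopf index; solving $df(\Xi)\equiv 0$ with $\Xi\big|_{V_0}=\zeta$ is delicate near the critical set of $f$. Second, the flatness and base-change properties of the relative K\"ahler differentials, i.e. the isomorphisms $\Omega^{j}_{\mathcal V/\C^{k}}\big|_{V_t}\cong\Omega^{j}_{V_t}$, which rest on the Cohen--Macaulayness of complete intersections and the presentation~(\ref{differential}). Both ingredients fail for arbitrary singular $V$, which is exactly why the statement is confined to the complete-intersection case.
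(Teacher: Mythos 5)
Your strategy breaks down at its first construction: a holomorphic extension $\Xi$ of $\zeta$ that is tangent to \emph{every} fibre $f^{-1}(t)$ does not exist in general, and its existence would contradict known properties of the GSV index. Indeed, if $\Xi$ is a holomorphic section of $\ker df$ restricting to $\zeta$ on $V$ and with singular locus finite over the base, then on a generic Milnor fibre $V_t$ it has finitely many zeros, each contributing a Poincar\'e--Hopf index $\geq 1$, so $\mathrm{GSV}(\zeta;V,0)$ would have to be non-negative --- this is exactly the content of Theorem \ref{tangent to fibers}. But the GSV index of a holomorphic vector field on an ICIS can be negative: by Proposition \ref{GSV}(2) the radial vector field on the Brieskorn hypersurface of Example \ref{homogeneous} has index $1+(-1)^n\mu<0$ for $n$ odd and $k\ge 3$, and the paper notes explicitly that this vector field can never be holomorphically extended tangent to the Milnor fibres, not even with non-isolated singularities. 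So the fibre-tangent extension your argument needs is obstructed precisely by the phenomenon (negativity of the index) that makes the theorem non-trivial. The flatness and base-change points you flag as delicate are real but secondary; note also that even granting them, local constancy of the fibrewise Euler characteristic of the homology of such a complex is itself a theorem (the ``law of conservation of number for local Euler characteristics''), not a formal consequence of coherence.

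The paper's proof (G\'omez-Mont for hypersurfaces, \cite{BEG} in general) never smooths $V$. It proceeds by (i) deriving closed algebraic formulas for the homological index (Theorem \ref{7.2.1}); (ii) showing that the difference $\mathrm{Ind}_{\rm hom}-\mathrm{Ind}_{\rm GSV}$ is a constant depending only on the germ $(V,0)$, via a conservation-of-number argument applied to perturbations of the \emph{vector field} on the fixed singular variety $V$ --- the new zeros created by the perturbation sit at smooth points of $V$, where both indices reduce to the Poincar\'e--Hopf index; and (iii) exhibiting one tangent vector field on each germ (Hamiltonian for $n$ even, a variant for $n$ odd) for which both indices vanish. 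Step (ii) is the salvageable kernel of your idea: the deformation must be of the vector field, not of the variety.
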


This theorem was proved by Gómez-Mont in \cite{Gom} when $(V,p)$ is a hypersurface germ, and the general case is proved in \cite{BEG}.  For simplicity 
we  restrict the discussion to the case where $V$ is a
hypersurface. In this case the proof of Theorem \ref{Th indices GSV-hom} in \cite{Gom}  has three main steps:

{\bf Step 1:} Finding an algebraic formula for the  homological index.

{\bf Step 2:}  Observe that the two indices coincide for one vector field if and only if they coincide for every vector filed. 

{\bf Step 3:}  Use the algebraic formula for the homological index and the basic properties of the GSV index, 
to prove  that on every hypersurface germ there exists a tangent holomorphic vector field for which the two indices are $0$.

Let us explain first the algebraic formulas for the homological index.  
Let $\B= \B^{n+1}$ be an open ball around the origin in $\C^{n+1}$. If 
 $f: \B^{n+1} \to \C$ is a holomorphic function with $0 \in
\C^{n+1}$ as its only critical point, then the 1-form
$\sum_{i=1}^{n+1} \frac{\partial f}{dz_i} dz_i$ vanishes only at
$0 \in \C^{n+1}$. Let ${\mathcal
I}_f$ be the Jacobian ideal $(\frac{\partial
f}{dz_1},\cdots,\frac{\partial f}{dz_{n+1}}) \subset \O_{{\C^{n+1}},0}$ of
$f$. Given a vector field $\zeta$ tangent to $V$, with a unique
singularity at $0$, restriction of a vector field $\widehat v =
(a_1,\cdots,a_{n+1})$ on $\B$ with a unique singularity at the
origin, it is shown in \cite[Theorem 2] {Gom} that  the homology
groups
 $H_i(\Omega^\bullet_{V,0},\zeta)$ of the above complex  have dimensions:
\begin{align*}
h_0(\Omega^\bullet_{V,0},\zeta) &=\dim_\C \O_{{\C^{n+1}},0}/(f,a_1,\cdots,a_{n+1}),\\
h_n(\Omega^\bullet_{V,0},\zeta) &= \dim_\C \O_{{\C^{n+1}},0}/(f, {\mathcal I}_f),\\
h_i(\Omega^\bullet_{V,0},\zeta) &=\lambda, \quad \text{for}  \quad
i=1,\dots,n-1,
\end{align*}
where $0 \le \lambda \le \dim_\C \O_{{\C^{n+1}},0}/(f, {\mathcal I}_f)\;$
is the integer:
\begin{align*}
\lambda=& \dim_\C \O_{{\C^{n+1}},0}/(f,a_1,\cdots,a_{n+1})+
\dim_\C \O_{{\C^{n+1}},0}/(\frac{df}{f}(\widehat v),a_1,\cdots,a_{n+1})\\
&-\dim_\C \O_{{\C^{n+1}},0}/(a_1,\cdots,a_{n+1}),
\end{align*}
noticing that the tangency condition for $\widehat v$ means that
$\widehat v (df) $ is a multiple of  $f$, so that $\frac{df}{f}
(\widehat v) $ is a holomorphic function on $\B$.  As a
consequence of these computations G\'omez-Mont  deduced the following expressions for the homological index
(Theorem 1 in \cite {Gom}).

\begin{theorem}\index{Homological index ! algebraic formula}\label{7.2.1}
Let $(V,0)$ be an isolated hypersurface singularity (germ) of
dimension $n$ in $\C^{n+1}$, and let $\zeta$ be the restriction to $V$
of a holomorphic vector field $\widehat v$ on a neighborhood $\B$
of $0$ in $\C^{n+1}$, which has an isolated singularity at $0$ and
is tangent to $V$.

\vv

\noindent  
{\rm (1)} If $n$ is odd, then the homological index of
$\zeta$ is:
\[
{\rm Ind}_{\rm hom}(\zeta,0;V)= \dim_\C \O_{\C^{n+1},0}/(f,
a_1,\cdots,a_{n+1})-\dim_\C\O_{\C^{n+1},0}/(f, {\mathcal I}_f).
 \]

\vv

\noindent
{\rm (2)}  If $n$ is even, then :
\begin{align*}
{\rm Ind}_{\rm hom}(\zeta,0;V)=& \dim_\C
\O_{{\C^{n+1}},0}/(a_1,\cdots,a_{n+1})\\&- \dim_\C
\O_{{\C^{n+1}},0}/(\frac{df}{f}(\widehat v),
a_1,\cdots,a_{n+1})
+\dim_\C \O_{{\C^{n+1}},0}/(f, {\mathcal I}_f).
\end{align*}
\end{theorem}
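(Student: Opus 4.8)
The plan is purely computational: I would substitute the dimensions of the homology groups $h_i(\Omega^\bullet_{V,0},\zeta)$ quoted above from \cite[Theorem 2]{Gom} into the definition of the homological index as an Euler characteristic, and then evaluate the resulting alternating sum while tracking the parity of $n$. All the substantive analytic and algebraic content has already been packaged into those dimension formulas, so what remains is bookkeeping.

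First I would separate the extreme terms from the intermediate ones, writing
$$
{\rm Ind}_{\rm hom}(\zeta,0;V) \,=\, h_0 + (-1)^n h_n + \lambda \sum_{i=1}^{n-1}(-1)^i \,,
$$
where I have used that $h_i = \lambda$ for every $i$ with $1 \le i \le n-1$. The one genuine observation is the behaviour of the alternating sum $\sum_{i=1}^{n-1}(-1)^i$: it vanishes when $n-1$ is even, that is when $n$ is odd, and it equals $-1$ when $n-1$ is odd, that is when $n$ is even.

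From here the two cases fall out immediately. If $n$ is odd the middle contribution drops out and $(-1)^n = -1$, so the expression collapses to $h_0 - h_n$, which is exactly formula (1) once I insert $h_0 = \dim_\C \O_{\C^{n+1},0}/(f,a_1,\cdots,a_{n+1})$ and $h_n = \dim_\C \O_{\C^{n+1},0}/(f,{\mathcal I}_f)$. If $n$ is even then $(-1)^n = +1$ and the middle sum contributes $-\lambda$, giving $h_0 + h_n - \lambda$; substituting the explicit value of $\lambda$, the two copies of $\dim_\C \O_{\C^{n+1},0}/(f,a_1,\cdots,a_{n+1})$ cancel and the remaining terms rearrange precisely into formula (2).

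The main obstacle is not located in this statement at all: the real work sits in the prior computation of $h_0$, $h_n$, and of the common value $\lambda$ of the intermediate homology groups, which I am taking as given. Granted those, the present theorem is routine; the only points demanding care are the parity analysis of the alternating sum, the sign $(-1)^n$ attached to $h_n$, and the verification that the $h_0$-term cancels correctly against the $h_0$ hidden inside $\lambda$ in the even case.
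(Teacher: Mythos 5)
Your proposal is correct and follows exactly the route the paper intends: the paper derives Theorem \ref{7.2.1} as an immediate consequence of the quoted dimensions $h_0$, $h_n$, and $h_i=\lambda$ from \cite[Theorem 2]{Gom}, and your parity analysis of $\sum_{i=1}^{n-1}(-1)^i$ together with the cancellation of $\dim_\C \O_{\C^{n+1},0}/(f,a_1,\cdots,a_{n+1})$ inside $\lambda$ in the even case is precisely the required bookkeeping. Both cases check out, so there is nothing to add.
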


Notice that in the two formulas above, for even and odd dimensions, the following invariant appears:
$$\dim_\C \O_{{\C^{n+1}},0}/(f,
a_1,\cdots,a_{n+1}) \,.$$
This is a reminiscent of the Tjurina number of holomorphic functions. The following name was  coined in \cite[p. 159]{CaCoMo}:

\begin{definition}\index{Tjurina number ! of a vector field}\label{Tjurina number of a vector field}
Let $\zeta = (a_1,\cdots,a_{n+1})$ be a holomorphic vector field in $\C^{n+1}$ be a holomorphic vector field in $\C^{n+1}$ with an isolated zero at the origin. Then its {\bf Tjurina (-Gómez-Mont)} number at $0$ is:
$$ \tau(\zeta) \, = \dim_\C \O_{{\C^{n+1}},0}/(f,
a_1,\cdots,a_{n+1}) \,.
$$
\end{definition}

The next step in \cite{Gom} for identifying the homological and the GSV  indices for holomorphic vector fields on hypersurface germs is Step 2 above:

 \begin{lemma} The difference
\[\,{\rm Ind}_{\rm hom}(\zeta,0;V) \,-\,  {\rm Ind}_{\rm GSV}(\zeta,0;V) \, = \, k \;,\]
is a constant  that depends only on $V$ and not on the choice of
vector field.
\end{lemma}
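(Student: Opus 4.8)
The plan is to prove the sharper statement that
\[
D(\zeta) \, := \, \mathrm{Ind}_{\mathrm{hom}}(\zeta,0;V) - \mathrm{Ind}_{\mathrm{GSV}}(\zeta,0;V)
\]
takes the same value for every holomorphic vector field $\zeta$ tangent to $V$ with an isolated singularity at $0$. I would fix two such fields $\zeta_0,\zeta_1$ and join them by a path $\zeta_t$, $t\in[0,1]$, of holomorphic vector fields tangent to $V$ (for instance the real segment $\zeta_t=(1-t)\zeta_0+t\zeta_1$, which is again tangent to $V$ and vanishes at $0$, perturbed generically if necessary), and then show that $D$ is transported unchanged along the path. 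The whole argument rests on two facts that I would establish first: both indices obey a Law of Conservation of Number, and both reduce to the ordinary Poincar\'e--Hopf index at the smooth points of $V$.

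The second fact is immediate from the definitions recalled above: at a point $p\in V\setminus\{0\}$ the germ of $V$ is that of $\C^{n}$, the complex $(\Omega^\bullet_{V,p},\iota_\zeta)$ is the ordinary Koszul complex, so its Euler characteristic is $\dim_\C \O_{\C^{n},p}/(a_1,\dots,a_{n})=\mathrm{PH}(\zeta;p)$, and the GSV index at a smooth point is by construction the same Poincar\'e--Hopf number. The conservation law is the substantive input. For the GSV index it follows from its description as the degree of the Stiefel map $\phi_\zeta$ on the link, equivalently as the total Poincar\'e--Hopf index in the Milnor fibre (Proposition \ref{GSV}): the sum of the local GSV indices of the zeros of $\zeta_t$ lying in $\B_\e\cap V$ equals the degree of $\phi_{\zeta_t}$ on $\mK=V\cap\s_\e$, hence is locally constant in $t$ as long as no zero of $\zeta_t$ meets $\mK$. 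For the homological index one argues sheaf-theoretically: the homology sheaves of $(\Omega^\bullet_V,\iota_{\zeta_t})$ are coherent and, since the complex is exact wherever $\zeta_t\neq 0$, are supported on the finite zero set; the hypercohomology Euler characteristic over $\B_\e\cap V$ therefore localizes as the sum $\sum_p \mathrm{Ind}_{\mathrm{hom}}(\zeta_t;V,p)$ and, the complex being exact near the boundary $\mK$, is a deformation invariant so long as the zeros stay away from $\mK$.

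Granting these two pillars, I would set $H(t)=\sum_{p}\mathrm{Ind}_{\mathrm{hom}}(\zeta_t;V,p)$ and $G(t)=\sum_{p}\mathrm{Ind}_{\mathrm{GSV}}(\zeta_t;V,p)$, the sums running over the zeros of $\zeta_t$ in $\B_\e\cap V$. After a generic choice of path these zeros are isolated, and only finitely many values of $t$ are crossing times at which a single zero meets $\mK$; since $\mK\subset V\setminus\{0\}$, every such crossing occurs at a smooth point of $V$. At a crossing both $H$ and $G$ jump by $\pm$ the Poincar\'e--Hopf index of the escaping zero, and by the second fact these two jumps are \emph{equal}; between crossings both sums are constant. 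Hence $H(t)-G(t)$ is constant on all of $[0,1]$. At the endpoints $t=0,1$ the only zero in $\B_\e\cap V$ is the singular point $0$ of $V$ (and any extra smooth zeros present at intermediate times contribute equally to $H$ and $G$, so cancel in the difference), giving $H(0)-G(0)=D(\zeta_0)$ and $H(1)-G(1)=D(\zeta_1)$; therefore $D(\zeta_0)=D(\zeta_1)$. Since $\zeta_0,\zeta_1$ were arbitrary this is exactly the assertion that $D$ is a constant $k$ depending only on $V$.

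The main obstacle is the conservation law for the homological index: unlike the GSV index, which is a topological degree and for which conservation is built into degree theory, the homological index is an Euler characteristic of a complex of coherent sheaves, and proving that it localizes additively at the zeros and is invariant under the deformation $\zeta_t$ requires the coherence of the K\"ahler-differential sheaves together with the exactness of the complex off the singular set and near $\mK$. A secondary technical point is the general-position argument ensuring that $\zeta_t$ has only isolated zeros with transversal, one-at-a-time crossings of $\mK$; connectedness of the space $\Theta_0(V,0)$ recalled above guarantees that suitable paths between any two fields exist.
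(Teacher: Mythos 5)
Your proposal is correct and follows essentially the same route as the paper, whose proof is exactly the two pillars you identify: both indices satisfy a law of conservation of number, and both reduce to the Poincar\'e--Hopf index at smooth points of $V$, so that all jumps under perturbation cancel in the difference. Your explicit path argument (using the connectedness of $\Theta_0(V,0)$) and your sheaf-theoretic sketch of the conservation law for the homological index simply flesh out what the paper dismisses as ``an exercise.''
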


The proof of this lemma is an exercise using that both indices satisfy a certain
``Law of conservation of the number’’. This says roughly that if we perturb slightly a holomorphic vector field $\zeta$ at $(V,p)$,  the new vector field $\xi$ still has a singularity at $p$ and new singularities arise at smooth points of $V$, then the index of $\zeta$ at $p$ is the index of $\xi$ at $p$ plus the sum of the PH-indices of singularities of $\xi$ at smooth points of $V$.

The final step for proving Theorem \ref{Th indices GSV-hom} consists in exhibiting a holomorphic vector on every hypersurface germ $(V,p)$ for which 
 the
GSV and homological indices are both $0$. When the dimension  $n$ of $V$ is even, such an example in \cite {Gom} is  a Hamiltonian vector field as in Example \ref{hamilton}. The case when   $n$ is odd is slightly more elaborate. In the two cases the vector field in question  extends in an obvious way to the Milnor fibers with no singularities, and therefore the GSV index vanishes. Then Gómez-Mont uses Theorem \ref{7.2.1} to prove that the homological index vanishes as well.

As a consequence we may state:

\begin{corollary}
Let $(V,0)$ be a hypersurface singularity in $\C^{n+1}$ with an isolated singularity, and let 
$ \zeta = (a_1,...,a_{n+1})$ be a holomorphic vector field  tangent to $V$ with an isolated singularity at $0$. Then the homological index of $\zeta$ at $0$, relative to $V$,  equals the $\mathrm{GSV}$ index and 
 if $n$ is odd, this  is the difference of their Tjurina numbers :
$$ GSV(\zeta,0,V) =    \tau(\zeta) - \tau(V) \,.$$
For $n$ even this is: 
$$ GSV(\zeta,0,V) =   PH(\zeta,0)+\tau(V)-  \dim_\C \frac{\O_{{\C^{n+1}},0}} {\big(\frac{\zeta(f)}{f},
a_1,\dots,a_{n+1}\big)}  \,.$$ 
\end{corollary}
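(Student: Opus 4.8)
The plan is to obtain all three assertions by combining Theorem \ref{Th indices GSV-hom} with the explicit algebraic formulas of Theorem \ref{7.2.1}, so that the proof reduces to unwinding the definitions and matching notation; no new analysis is required. First I would invoke Theorem \ref{Th indices GSV-hom}: since $(V,0)$ is a hypersurface germ it is in particular a complete intersection, so that theorem gives
\[
\mathrm{GSV}(\zeta,0;V) \,=\, {\rm Ind}_{\rm hom}(\zeta,0;V)\,,
\]
which is the first assertion of the corollary. Everything else is a substitution of Gómez-Mont's formulas into this equality.

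For $n$ odd I would quote Theorem \ref{7.2.1}(1), which reads
\[
{\rm Ind}_{\rm hom}(\zeta,0;V) \,=\, \dim_\C \O_{\C^{n+1},0}/(f,a_1,\dots,a_{n+1}) \,-\, \dim_\C \O_{\C^{n+1},0}/(f,\mathcal{I}_f)\,.
\]
The first term is exactly $\tau(\zeta)$ by Definition \ref{Tjurina number of a vector field}, and the second term is the Tjurina number $\tau(V) = \dim_\C \O_{\C^{n+1},0}/(f,\mathcal{I}_f)$ of the hypersurface. Combining with the displayed equality above yields $\mathrm{GSV}(\zeta,0;V) = \tau(\zeta) - \tau(V)$.

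For $n$ even I would quote Theorem \ref{7.2.1}(2) and make three identifications. The term $\dim_\C \O_{\C^{n+1},0}/(a_1,\dots,a_{n+1})$ is the Poincar\'e--Hopf index $PH(\zeta,0)$ of the ambient field by \ref{PH-holomorphic}; the term $\dim_\C \O_{\C^{n+1},0}/(f,\mathcal{I}_f)$ is again $\tau(V)$; and the symbol $\frac{df}{f}(\widehat v)$ is by construction the contraction $\widehat v(f)/f = \zeta(f)/f$, which is holomorphic precisely because of the tangency condition $\widehat v(f)\in(f)$ recorded just before Theorem \ref{7.2.1}. Substituting and rearranging the three summands gives
\[
\mathrm{GSV}(\zeta,0;V) \,=\, PH(\zeta,0) + \tau(V) - \dim_\C \O_{\C^{n+1},0}\big/\big(\tfrac{\zeta(f)}{f},a_1,\dots,a_{n+1}\big)\,,
\]
as claimed.

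The genuine mathematical content, and hence the real obstacle, lies entirely inside the two cited theorems rather than in the corollary itself: Theorem \ref{7.2.1} rests on Gómez-Mont's computation of the homology dimensions $h_i(\Omega^\bullet_{V,0},\zeta)$ and on the cancellation of the intermediate groups $h_1=\dots=h_{n-1}=\lambda$ in the alternating sum, while Theorem \ref{Th indices GSV-hom} uses the conservation-of-number argument together with the existence of a tangent field making both indices vanish. Granting those, the only points that require a moment's care here are the identification of $\frac{df}{f}(\widehat v)$ with $\zeta(f)/f$ and the recognition that $\dim_\C \O_{\C^{n+1},0}/(f,\mathcal{I}_f)$ is the classical Tjurina number $\tau(V)$; both are bookkeeping rather than substance.
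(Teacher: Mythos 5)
Your proposal is correct and is exactly the derivation the paper intends: the corollary is stated as an immediate consequence of Theorem \ref{Th indices GSV-hom} (equality of the GSV and homological indices for complete intersections) combined with the two algebraic formulas of Theorem \ref{7.2.1}, with the same bookkeeping identifications of $\tau(\zeta)$, $\tau(V)$, $PH(\zeta,0)$ and $\frac{df}{f}(\widehat v)=\zeta(f)/f$ that you make. No further argument is needed.
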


 Let us finish this section with two open questions:
 
\vskip.2cm 

\noindent{\bf Question 1:}
We know that the GSV index can be negative. We also know from \cite{B-GM} that given any isolated hypersurface germ $(V,0)$ there are infinitely many holomorphic vector fields on $V$ with an isolated singularity at $0$, and those with the smallest homological (or GSV) index are dense. What is this index? 

For example, if the singularity is quasi-homogeneous,  this bound seems to be given by the index of the radial vector field, which equals the Euler characteristic of the Milnor fiber. 
\vskip.2cm 
\noindent{\bf Question 2:}
 Let $X$ be a compact complex analytic variety with isolated singularities which are ICIS. Let $\zeta$ be a continuous vector field on $X$ with isolated singularities at the singular points of $X$ and perhaps at some other smooth points of $X$. Assume further that at each point is $X_{sing}$, $\zeta$ is holomorphic, and define its total homological index in the obvious way. Then it is an exercise to show that this is an invariant of $X$, independent of the vector field. What is this invariant? For instance, if $X$ is a local complete intersection, this is the 0-degree Fulton class of $X$, by \cite{SS}. 


\subsection{The virtual index}
The GSV index  inspired also a line of research in the vein of the Baum-Bott’s residues, using Chern-Weil theory:  this is the virtual index,  defined by D. Lehmann, M. Soares and T. Suwa
in \cite{LSS} for vector fields on local complete intersections in  complex manifolds. This was originally defined  for holomorphic  vector fields 
and it was later extended in \cite{SS} for continuous vector fields, which is useful for the study of   characteristic classes on singular varieties (see Section \ref{SS classes}). The virtual index is defined even if the vector field and the variety have non-isolated singularities; it attaches an index in $\Z$ to each compact connected component of the singular set of the vector field. When the singularity is isolated, this coincides with the GSV index.

The starting point for defining the virtual index is recalling that for a continuous vector field $v$ on a complex $m$-manifold $X^m$ with an isolated singularity at a point, say $0$, the local Poincaré-Hopf index can be regarded as being   the localization at $0$ of the top Chern class of $X$. This is explained in detail in \cite{BSS}  using Chern-Weil theory. In the isolated singularity case, which is the most relevant for this article, there is also the article 
  \cite {Suwa2014} by T. Suwa, where he 
  describes  the localization in a particularly nice way, which is relevant for Section  \ref{Poincare Problem}, where  this is recalled (in \ref
{ss. Suwa}). 

Here we give an elementary intuitive explanation of the definition of the virtual index from the topological point of view. Let us explain. 

The tangent bundle $TX$ has Chern classes $c_1(X), ..., c_m(X)$ in the cohomology ring $H^*(X; \Z)$. If $X$ is compact, cap product with the fundamental cycle $[X]$ maps these into homology classes in $H_*(X)$; these are called the homology Chern classes of $X$.  
The Gauss-Bonnet theorem tell us that the image in  $H_0(X) \cong \Z$ of the top Chern class $c_m(X)$ is the Euler characteristic
$\chi(X)$, which equals the total index  of a vector field on $X$ with isolated singularities, by the theorem of Poincaré-Hopf.


More generally, let $v$ be a continuous vector field on $X$ and assume $v$ vanishes on some analytic subset $S$ of $X$. We assume further that there is  a regular neighborhood $\mathcal T$ of $S$ in $X$ such that 
 $v$ is non-singular on  $\mathcal T \setminus S$. Then $V$   splits the Chern class $c_m(X)$ in two parts: one has support  in the interior of $\mathcal T$ and another in its exterior
(see \cite[Chapter 1]{BSS} for details). By Alexander duality, these can be regarded as homology classes in $H_0(\mathcal T)$ and 
$H_0(X \setminus \mathcal T)$.  

\begin{definition}\index{Localization}
The contribution to $c_m(X)[X] = \chi(X)$ coming from $H_0(\mathcal T) \cong H_0(S) $ is {\it the localization of $c_m(X)[X]$} at $S$. 
\end{definition}

If $S$ is a point $p$, then the localization of $c_m(X)[X]$ at $p$ is the corresponding local PH-index. 
 We can always perturb $v$ slightly so that it has isolated singularities in $\mathcal T$ and  the localization of $c_m(X)[X]$ at $S$ is  the total  PH-index at $S$, see \cite[Chapter 1]{BSS}.

We now want to adapt these ideas to vector fields on singular spaces. Let $V$ be a subvariety of the complex manifold $X^m$ defined by a regular section $s$ of a holomorphic vector bundle $E$ over $X$ of rank $k$, with $1 \le k < m$. 

If $s$ is transversal everywhere to the zero-section of $E$, then $V$ is non-singular, the bundle $E|_V$ is isomorphic to the normal bundle of $V$  and the Chern classes  of $V$ can be regarded as being those of the virtual bundle $TX\vert_V - E\vert_V$. Otherwise  $V$ is a singular  local complete intersection and one defines its  {\it virtual tangent bundle}\index{Virtual tangent bundle} by.
$$ \tau V:= TX\vert_V - E\vert_V \;.$$
 The total Chern class  of  $ \tau V$ is  defined in the usual way:
$$c_*(\tau V) = c_*(TX\vert_V ) \cdot c_*(E \vert_V ) ^{-1} \,,$$
where  the term in the extreme right is the inverse of $ c_*(E \vert_V ) $ in the cohomology ring. 

One gets the {\it total Fulton class} $c_*^{Fu}(V) = c_0^{Fu}(V) + .... + c_n^{Fu}(V)$~\index{Fulton classs} by taking the cap product of $c_*(\tau V) $ with the fundamental cycle $[V]$. Fulton in \cite{Fu} calls this the canonical class. 
The components $c_i^{Fu}(V) \in H_{2i}(V)$, for   $i= 0,...,n$,  are 
the {\it Fulton classes} of $V$. 

We remark that in \cite {Fu} Fulton actually defines these as classes of cycles in the Chow group and calls $c_*^{Fu}(V)$ the canonical class of $V$. His definition uses the Segre classes and it works for singular varieties in full generality (we refer to \cite {BSS, Aluffi6, Bra, CMS-2}  for thorough accounts, with different viewpoints,  on Chern and their relations with singularity theory).

The idea now is to localize the zero-degree Fulton  class (which corresponds to the top dimensional class in cohomology) using a tangent vector field, as we did on manifolds. The problem is that    $\tau V$ is a virtual bundle and the usual methods for localizing Chern classes via topology do not work. 

Localization is done in   \cite {LSS, BLSS2, BSS} by means of Chern-Weil theory and  it works equally well if we replace the isolated singularity $p$ by a compact connected component $S$ of the singular set. We refer to \cite{BSS} for details. 

Let us explain this construction  in a special case which is particularly interesting and where it is much simpler. Assume that the regular section $s$ that defines $V$ can be put in a flat family $\{s_t\}_{t\in \C}$ of holomorphic sections of $E$ such that $s_0 = s$ and for $t\ne 0$ the section $s_t$ is transversal to the zero section. 
Set $V_t := \{ x \in X \, | \, s_t(x) = 0 \}$. 

We have a flat family of manifolds $V_t$ degenerating to the special fiber $V$. Over each $V_t$ one has the virtual bundle 
$$TX|_{V_t} - E|_{V_t} \,.$$ 
For  $t \ne 0$ the bundle $E|_{V_t}$ is isomorphic to the normal bundle of $V_t$, so the Chern classes of $TX|_{V_t} - E|_{V_t}$ are the Chern classes of  the tangent bundle $TV_t$, and these can be regarded in homology. Denote these by $c_{*,t} \in H_*(V_t)$.

Now let $\widetilde {\mathcal T}$ be a regular neighborhood of $V$ in $X$, and  let $\e_o>0$ be small enough so that whenever $|t| < \e_o$,  the manifold $V_t$ is contained in $\widetilde {\mathcal T}$. This exists 
by the compactness of $V$.  Since  $\widetilde {\mathcal T}$ has $V$ as a deformation retract, one has well defined homomorphisms:
$$ H_*(V_t) \buildrel{\iota_*} \over {\to}   H_*(\widetilde {\mathcal T})     \buildrel{r_*} \over {\to}   H_*(V) \,,  $$
 induced by the inclusion of $V_t$ in $\widetilde {\mathcal T}$ and  a retraction to $V$. Set 
$\mathcal S =  r_* \circ \iota_*$. This is a special case of Verdier’s specialization \cite{Verdier2}, and we know that the image $\mathcal S(c_{*,t}) \in H_*(V)$ are the Fulton classes of $V$.
 Let us summarize this discussion:
 
 \begin{proposition}
 The Verdier specialization morphism $\mathcal S$ carries the homology Chern classes of the general fiber $V_t$ into the Fulton classes of the special fiber $V$. 
\end{proposition}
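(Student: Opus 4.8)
The plan is to realize both the homology Chern classes $c_{*,t}$ and the Fulton classes $c_*^{Fu}(V)$ as cap products of one and the same ambient cohomology class with the respective fundamental cycles, and then to transport the computation through $\widetilde{\mathcal T}$ using the projection formula together with the fact that $\widetilde{\mathcal T}$ deformation retracts onto $V$. First I would introduce the total Chern class of the virtual bundle $TX - E$ as a genuine ambient class
$$\gamma \,:=\, c_*(TX)\cdot c_*(E)^{-1} \,\in\, H^*(X)\,,$$
and observe that its restriction to $V$ is $c_*(\tau V)$, whereas for $t\neq 0$ its restriction to $V_t$ is $c_*(TV_t)$, since there $s_t$ is transverse and hence $E|_{V_t}\cong N_{V_t}$, so that the normal bundle sequence gives $c_*(TV_t)=c_*(TX|_{V_t})\,c_*(E|_{V_t})^{-1}=\gamma|_{V_t}$. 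By the very definitions of the two classes one then has
$$c_{*,t} \,=\, \gamma|_{V_t}\cap [V_t] \qquad\text{and}\qquad c_*^{Fu}(V) \,=\, \gamma|_V\cap [V]\,.$$

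Next, writing $\iota\colon V_t\hookrightarrow \widetilde{\mathcal T}$ for the inclusion and noting that $\gamma|_{V_t}$ is the restriction of $\gamma|_{\widetilde{\mathcal T}}$, the projection formula yields
$$\iota_*\big(\gamma|_{V_t}\cap [V_t]\big) \,=\, \gamma|_{\widetilde{\mathcal T}}\cap \iota_*[V_t]\,.$$
Because $\widetilde{\mathcal T}$ deformation retracts onto $V$, the retraction $r$ and the inclusion $V\hookrightarrow\widetilde{\mathcal T}$ induce mutually inverse isomorphisms on cohomology; consequently $\gamma|_{\widetilde{\mathcal T}}=r^*(\gamma|_V)$. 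Applying $r_*$ and the projection formula a second time gives
$$\mathcal S(c_{*,t}) \,=\, r_*\big(r^*(\gamma|_V)\cap \iota_*[V_t]\big) \,=\, \gamma|_V\cap r_*\iota_*[V_t] \,=\, \gamma|_V\cap \mathcal S([V_t])\,.$$
Thus the whole statement is reduced to the single identity $\mathcal S([V_t])=[V]$ in $H_*(V)$.

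It then remains to identify the specialization of the fundamental cycle of the general fiber with that of the special fiber. This is precisely the compatibility of Verdier's specialization with fundamental classes: the total space $\mathcal V=\{(x,t): s_t(x)=0\}$ is flat over the disk, so the cycle-theoretic limit of the general fibers $[V_t]$ as $t\to 0$ is the special fiber $[V]$ counted with its natural multiplicities, and this limit is exactly what $\mathcal S=r_*\circ\iota_*$ computes at the level of homology. Granting this, we conclude
$$\mathcal S(c_{*,t}) \,=\, \gamma|_V\cap [V] \,=\, c_*(\tau V)\cap [V] \,=\, c_*^{Fu}(V)\,,$$
which is the assertion. I expect the flatness step — that $\mathcal S$ preserves fundamental cycles — to be the only genuine difficulty, since it is where the degeneration hypothesis enters and where the correct multiplicities of the (possibly non-reduced) limit must be accounted for; by contrast, the two applications of the projection formula and the retraction identity $\gamma|_{\widetilde{\mathcal T}}=r^*(\gamma|_V)$ are formal.
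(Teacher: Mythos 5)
Your argument is correct, and it is genuinely different in character from what the paper does: the paper offers no proof at all at this point, but simply observes that the statement is a special case of Verdier's specialization theorem \cite{Verdier2} and records it as a summary of the preceding discussion. Your route is the natural direct one: since $c_*^{Fu}(V)=\bigl(c_*(TX)c_*(E)^{-1}\bigr)\big|_V\cap[V]$ and $c_{*,t}=\bigl(c_*(TX)c_*(E)^{-1}\bigr)\big|_{V_t}\cap[V_t]$ are cap products against restrictions of one ambient class $\gamma$, the two applications of the projection formula and the identity $\gamma|_{\widetilde{\mathcal T}}=r^*(\gamma|_V)$ (valid because $r$ and the inclusion of $V$ are inverse homotopy equivalences) legitimately reduce everything to $\mathcal S([V_t])=[V]$. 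What this buys you is a self-contained, elementary proof in singular homology that avoids invoking the full machinery of specialization of constructible functions; what the citation to Verdier buys the authors is functoriality in much greater generality (arbitrary specializations, not just the global flat family of zero loci considered here).

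The one step you leave as a black box, $r_*\iota_*[V_t]=[V]$, is true and can be closed without appealing to cycle-theoretic limits: since $s$ (resp.\ $s_t$) is a regular section of $E$ vanishing precisely on $V$ (resp.\ $V_t$) and nowhere on $\partial\widetilde{\mathcal T}$ for $|t|<\e_o$, the fundamental class $[V]$ (resp.\ $[V_t]$) is the Alexander--Lefschetz dual in the $2(n+k)$-manifold $\widetilde{\mathcal T}$ of the top Chern class of $E$ localized at the zero set of the section, i.e.\ of a class in $H^{2k}(\widetilde{\mathcal T},\widetilde{\mathcal T}\setminus V)$ (resp.\ $H^{2k}(\widetilde{\mathcal T},\widetilde{\mathcal T}\setminus V_t)$). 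These localized classes have a common image in $H^{2k}(\widetilde{\mathcal T})$, namely $c_k(E)|_{\widetilde{\mathcal T}}$, and the continuity of the family $\{s_t\}$ shows the dual homology classes agree in $H_{2n}(\widetilde{\mathcal T})$; reducedness of $V$ guarantees the multiplicity is one. With that inserted, your proof is complete.
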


We now explain  how to localize the top Fulton class of $V$ at each connected component of its singular set $V_{Sing}$  using a vector field. Intuitively this is easy: we use the vector field as we did on manifolds, to localize a contribution to the index at a component of the singular set of the vector field, and carry this contribution to the special fiber using the specialization morphism.

For simplicity we do it when the variety $V$, defined by a regular section of a holomorphic bundle $E$ over $X$, only has isolated singularities, say $p_1,..., p_r$. The process in general is similar but a little more difficult technically. 

For each $p_i$ we may consider a conical neighborhood $X_i$ of $p_i$ in $V$ and “replace it” by a local Milnor fiber $F_{i,t}$. We get a compact manifold $\widehat V_t$, and as $|t|$ tends to $0$ this degenerates to $V$.  As above, the homology Chern classes of $V_t$ are mapped to the Fulton classes of $V$. Now, given a vector field on  $V$ with isolated singularities, we can take its restriction to the compact manifold with boundary
$V \setminus (\bigcup_{i= 1}^r X_i\bigcup)$, and extend it to all of $\widehat V_t$ with isolated singularities on each local Milnor fiber $F_{i,t}$. This splits $c_n(\widehat V_t)[V_t]$ into a contribution coming from the smooth part $V \setminus (\bigcup_{i= 1}^r X_i \bigcup)$, and a contribution coming from each $F_{i,t}$.

\begin{figure}
	\centering
\quad \includegraphics[height=5cm ]{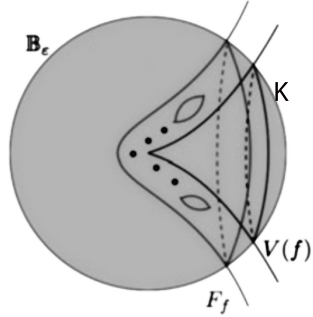}
	\caption{The  virtual index is the localization at $0$ of the PH index  in a Milnor fiber.}
\end{figure}

\begin{definition}\index{Virtual index}
The virtual index of $v$ at an isolated singularity $p$ is the localization at $p$ of the Fulton class $c_n(V) \in H_0(V) \cong \Z$ determined by $v$. 
\end{definition}

From the above construction we see that this invariant equals the total Poincaré-Hopf index of $v$ at a local Milnor fiber when the singularity $(V,p)$ can be put in a flat family as above. This condition is not necessary and the result id proved in general, either in \cite{BSS, BLSS2} via Chern-Weil theory, or in \cite {CMS-2} topologically. We have:

\begin{theorem}\label{Th GSV vs virtual}
The {\rm GSV} and the virtual indices coincide for vector fields on ICIS germs.
\end{theorem}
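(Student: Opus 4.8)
The plan is to prove that the GSV and virtual indices coincide for vector fields on ICIS germs by showing that both invariants are computed by the same geometric quantity: the total Poincaré--Hopf index of the vector field on a local Milnor fiber. Both the GSV index (by Proposition \ref{GSV}(1)) and the virtual index (by the construction preceding this theorem) are defined, or at least characterized, in terms of this Poincaré--Hopf count, so the strategy is to reduce each to this common value.

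First I would recall that for an ICIS germ $(V,0)$ of dimension $n$ in $\C^{n+k}$, defined by $f=(f_1,\dots,f_k)$, the variety $V$ is realized as the zero locus of a regular section of the trivial bundle $E = \C^k$ over a small ball $\B_\e$, namely the section given by $f$ itself. This is exactly the local complete intersection setup required for the virtual index. The crucial point is that an ICIS always admits the Milnor fibration, and moreover the section $f$ can be put in a flat family $\{f - t\}_{t \in \C^k}$ of sections of $E$ whose generic members are transversal to the zero section, with $V_t = f^{-1}(t)$ being the Milnor fibers. Thus the ICIS falls squarely inside the ``special case'' worked out in detail before the theorem, where the virtual index is explicitly identified with the total Poincaré--Hopf index of $v$ on a local Milnor fiber $F_{t}$.

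The core of the argument then has two halves. On one side, Proposition \ref{GSV}(1) gives $\mathrm{GSV}(v;V,0) = PH(v;F)$, the total Poincaré--Hopf index of $v$ on the Milnor fiber $F$. On the other side, the construction preceding the statement shows that the virtual index at the isolated singularity $0$ is precisely the localization at $0$ of the top Fulton class $c_n(V)\in H_0(V)\cong\Z$, and that via the Verdier specialization morphism $\mathcal S$ this localized contribution is realized as the Poincaré--Hopf index of $v$ on the same local Milnor fiber. I would make this identification explicit: extend $v$ from the compact manifold-with-boundary $V\setminus X_0$ across the inserted Milnor fiber $F_{t}$ with isolated singularities, observe that this splits $c_n(\widehat V_t)[V_t]$ into a boundary contribution and the contribution on $F_{t}$, and note that the latter is by construction the virtual index while by Proposition \ref{GSV}(1) it equals the GSV index. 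Identifying the two contributions yields $\mathrm{GSV}(v;V,0) = \mathrm{Vir}(v;V,0)$.

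The main obstacle I anticipate is the compatibility of the two boundary conventions, that is, checking that the non-singular vector field on $\partial F = \mK$ used to define the degree map $\phi_v$ in the GSV construction matches, up to the ambient isotopy supplied by the Ehresmann Fibration Lemma, the restriction of $v$ to the boundary of the Milnor fiber used in the specialization construction. Both definitions implicitly fix an orientation and a collar, and one must verify that the localized Fulton-class contribution and the degree $\deg(\phi_v)$ are computed against the same orientation of $\mK$ and with compatible framings. Once this bookkeeping is settled, the equality of the two Poincaré--Hopf counts is immediate. Since the general (non flat-family) case is already asserted to follow from \cite{BSS, BLSS2} or \cite{CMS-2}, I would remark that the flat-family argument above covers the ICIS case directly and cite those references for the general localization machinery underlying the specialization step.
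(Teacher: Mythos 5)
Your proposal is correct and follows essentially the same route as the paper: both indices are identified with the total Poincar\'e--Hopf index of $v$ on a local Milnor fiber, using Proposition \ref{GSV}(1) for the GSV side and the flat-family/specialization description of the localized Fulton class for the virtual side, with the ICIS realized as the zero locus of the section $f$ of a trivial bundle deformed by $f-t$. The paper itself only sketches this and defers the general (non flat-family) case to \cite{BSS, BLSS2, CMS-2}, exactly as you do.
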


This theorem is discussed again in Section  \ref{Poincare Problem} from the viewpoint of Chern-Weil theory, following Suwa’s viewpoint in   \cite {Suwa2014} instead of the original one in \cite{LSS} (or in \cite{BSS}).

\vskip1cm

\subsection {Radial index and the  Milnor number}

There are  other  indices of vector fields and 1-forms on singular varieties that play an important role in singularity theory. Two of these are 
 the radial or Schwartz index, and the local Euler obstruction, which  plays an important role for MacPherson’s construction of Chern classes for singular varieties. This invariant has been extended to an index for vector fields and 1-forms on ICIS singularities.  We refer to  \cite{BSS, EG, CMS-2} for thorough accounts on this subject.

 The radial (or Schwartz) index\index{Radial index}  springs from the work of M. H. Schwartz in 1965, but she defined it only for a special class of vector fields, those obtained by radial extension. Her goal was  defining Chern classes for singular varieties (see \cite{BSS}).  For instance, if $p$ is an isolated singularity in an analytic space $X$, the only vector fields considered by M. H. Schwartz were the radial ones, {\it i.e.,} those transversal to each small sphere in the ambient space, centered at the singular point, and in that case the Schwartz index is 1. 
 King and Trotman  defined an index in general in \cite{KT}, a 1991 preprint that unfortunately was not  published till 2014, which essentially is the radial index. In the meantime, the same index was discovered simultaneously in \cite{EG1} and \cite{ASV}. 
 This is defined in great generality, for real or complex analytic spaces,  and we refer to  \cite{EG} and  \cite[Ch. 2]{BSS} for accounts on the subject. In the simplest case, where one considers an isolated singularity germ $(V,0)$ and a continuous vector field on $V$ with an isolated singularity at $0$, this index is defined as: 
$$ \mathrm{Rad}(v; 0,V) = 1 + d(v,v_{\rm rad}) \,,$$
where $v_{\rm rad} $ is a radial vector field on $V$ ({\it i.e}, it is transversal to the intersection of $V$ with every sufficiently small sphere, pointing outwards) and  $d(v,v_{\rm rad})$ is {\it the difference between these two vector fields}, as depicted in   figure  \ref{figure radial}:   this is the total index of a vector field $\xi$ on a cylinder bounded by links $\mK_{\e’} = V \cap \s_{\e’} $  and $\mK_{\e} = V \cap \s_{\e}$ with $0 < \e’ < \e$ and $\xi$ restricts to $v_{\rm rad}$ on $\mK_{\e’}$ and to $v$ on $\mK_{\e}$.

\begin{figure}[h!]
	\centering
\quad \includegraphics[height=5cm ]{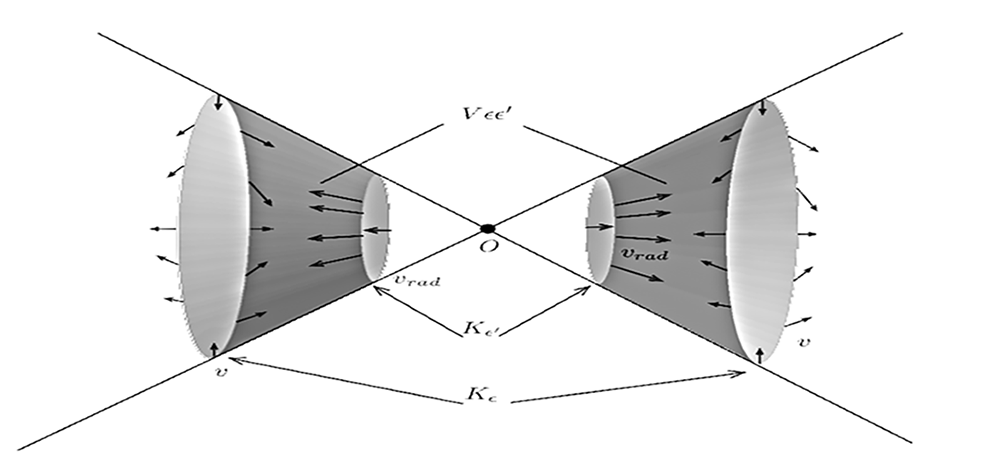}
	\caption{The  radial index}
    \label{figure radial}
\end{figure}

When $V$ has non-isolated singularities, the radial index is defined similarly as above but  there are two points of view.  In \cite[Ch. 2]{BSS} one assigns an index to each connected component $\Sigma$ of the singular set of a singular variety. For this one takes a regular neighborhood $N_\Sigma$ of $\Sigma$ with smooth boundary, and a smaller such neighborhood $N_\Sigma’$ and use the cylinder bounded by these to define the difference  $d(v,v_{\rm rad})$, where $v$ is the given vector field and $v_{\rm rad}$ is now the unit normal vector field on the boundary $\partial N_\Sigma’$, pointing outwards. In this case we define the radial index of  $v_{\rm rad}$  to be the Euler characteristic $\chi(\Sigma)$ and the index of $v$ is $\chi(S) + d(v,v_{\rm rad})$. 

In \cite{EG1, EG} the definition is more refined as one must consider  Whitney stratified vector fields with isolated singularities and one defines a radial index at each singularity of the vector field. 
The relation between the two definitions is as follows: the above radial index associated to $\Sigma$ is the sum of the radial indices of the stratified vector field with isolated singularities contained in $\Sigma$. In either case the radial index at $\Sigma$ actually is a class in $H_0(\Sigma; \mathbb Z) \cong  \mathbb Z$, it can be regarded in $H_0(V; \mathbb Z)$ via the inclusion,  
and it has the following important properties (see \cite{BSS, EG}):

\begin{theorem}
\begin{enumerate}
\item Let $(V,0)$ be an $n$-dimensional ICIS  with  Milnor number $\mu$, and 
 let $v$ be a continuous vector field on $V$ with an isolated singularity at $0$. Then their difference is the Milnor number of $(V,0)$, independently of $v$: $$ \mu = (-1)^n \big( \mathrm{GSV}(v;0,V)  -  \mathrm{Rad}(v;0,V)  \big) \,.
$$

\item  If $V$ is a compact local complete intersection and $v$ is a continuous vector field on $V$, singular at the singular set $V_{sing}$ and at finitely many smooth points of $V$, then its total radial index (defined in the obvious way) is the Euler characteristic $\chi (V)$, which equals the $0$-degree Schwartz-MacPherson class of $V$. 
\end{enumerate}
\end{theorem}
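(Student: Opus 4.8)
The plan is to treat the two parts separately, deriving (1) from a local comparison of the two indices on the radial vector field, and (2) from a global Poincaré--Hopf theorem for the radial index together with the normalization of the Chern--Schwartz--MacPherson classes.

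For part (1), I would first observe that the quantity $\mathrm{GSV}(v;0,V) - \mathrm{Rad}(v;0,V)$ does not depend on $v$. Indeed, both indices are assembled from the same topological \emph{difference term}: the radial index is by definition $\mathrm{Rad}(v;0,V) = 1 + d(v,v_{\rm rad})$, where $d(v,v_{\rm rad})$ is the total index on the cylinder bounded by the links $\mK_{\e'}$ and $\mK_{\e}$ interpolating $v_{\rm rad}$ and $v$; and the degree of the Stiefel frame map $\phi_v$ defining the GSV index changes, when $v$ is replaced by $v_{\rm rad}$, by exactly this same cylinder index. Hence
$$\mathrm{GSV}(v;0,V) - \mathrm{GSV}(v_{\rm rad};0,V) = d(v,v_{\rm rad}) = \mathrm{Rad}(v;0,V) - \mathrm{Rad}(v_{\rm rad};0,V),$$
so that $\mathrm{GSV}(v;0,V) - \mathrm{Rad}(v;0,V)$ is independent of $v$ and may be computed on $v_{\rm rad}$ alone.

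It then remains to evaluate both indices for $v=v_{\rm rad}$. By definition the radial index of a radial vector field is $\mathrm{Rad}(v_{\rm rad};0,V) = 1$, while $v_{\rm rad}$ is transverse to the link $\mK$, so Proposition \ref{GSV}(2) gives $\mathrm{GSV}(v_{\rm rad};0,V) = 1 + (-1)^n\mu$. Subtracting yields $\mathrm{GSV}(v;0,V) - \mathrm{Rad}(v;0,V) = (-1)^n\mu$, and multiplying by $(-1)^n$ gives the stated identity. The one step needing care is the claim that the frame-degree difference equals the cylinder index $d(v,v_{\rm rad})$; this is precisely the assertion that both indices obey the same ``law of conservation of the number'', and I would justify it by building a single interpolating vector field on the cylinder and comparing the obstruction to its nonvanishing with the difference of the degrees of $\phi_v$ and $\phi_{v_{\rm rad}}$ into the $(2n-2)$-connected Stiefel manifold $W_{k+1}(n+k)$.

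For part (2), I would reduce the global statement to a Poincaré--Hopf theorem for the radial index: the total radial index of any continuous stratified vector field with isolated singularities on the compact local complete intersection $V$ equals $\chi(V)$. To prove this, fix a Whitney stratification and use M.-H.\ Schwartz's radial extension to produce a vector field whose total index is, by construction, the localization of the $0$-degree Schwartz class; by the normalization property of the Chern--Schwartz--MacPherson classes --- pushforward to a point sends $c_*^{\mathrm{SM}}(V)$ to $\int_V c_*^{\mathrm{SM}}(V) = \chi(V)$, and only the $H_0$-component survives --- this total index is exactly $\chi(V)$ and equals $c_0^{\mathrm{SM}}(V)$. Finally, for an arbitrary $v$ as in the statement, the additivity of the radial index over the singular components, together with the conservation argument already used in part (1) but applied now globally on the compact $V$, shows that replacing $v$ by the radial vector field does not change the total index; hence the total radial index of $v$ is $\chi(V) = c_0^{\mathrm{SM}}(V)$ as well. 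The main obstacle here is the global conservation step: one must check that the local difference terms $d(v,v_{\rm rad})$ accumulated over all singularities sum to zero on a compact variety, which I expect to follow from a relative Poincaré--Hopf / Stokes-type argument on $V_{\rm reg}$ exactly as in the smooth case.
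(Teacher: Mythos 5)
The paper itself gives no proof of this theorem (it is quoted from \cite{BSS, EG}), so I compare your proposal against the standard arguments in those references. Your part (1) is correct and is essentially that argument: the key point, which you correctly identify as the one needing care, is that the GSV index obeys the same difference formula as the radial index, because on the smooth collar between the two links the gradient frame $(\overline\nabla f_1,\dots,\overline\nabla f_k)$ never vanishes, so the change in the degree of $\phi_v$ into the $(2n-2)$-connected Stiefel manifold is exactly the total Poincar\'e--Hopf index of the interpolating field on the cylinder, i.e.\ $d(v,v_{\rm rad})$. Evaluating on $v_{\rm rad}$ via Proposition \ref{GSV}(2) then gives $(-1)^n\mu$.

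In part (2) the strategy is right, but your final ``global conservation'' step is mis-stated: the difference terms $d(v,v_{\rm rad})$ do \emph{not} sum to zero in general (their sum depends, for instance, on how many smooth singular points $v$ has). The correct bookkeeping is the following. Remove from $V$ open regular neighborhoods $N'_\Sigma$ of the components $\Sigma$ of $V_{\rm sing}$; the complement $M$ is a compact smooth manifold whose boundary has odd real dimension $2n-1$, so $\chi(\partial M)=0$ and the Poincar\'e--Hopf theorem on $M$ (the inward/outward distinction on $\partial M$ being immaterial here) gives that the total index of the interpolating field on $M$, namely $\sum_\Sigma d(v,v_{\rm rad})_\Sigma+\sum_p \mathrm{PH}(v;p)$ over the smooth singular points $p$, equals $\chi(M)=\chi(V)-\sum_\Sigma\chi(\Sigma)$; adding back the terms $\chi(\Sigma)$ from the definition yields total radial index $=\chi(V)$. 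With that correction the argument closes. I would also flag a mild circularity in deriving the radial Poincar\'e--Hopf theorem from the normalization of the Chern--Schwartz--MacPherson classes: the identification of M.-H.\ Schwartz's obstruction classes with MacPherson's classes is itself a nontrivial theorem of Brasselet--Schwartz, so it is cleaner to prove the $\chi(V)$ statement directly as above and invoke Brasselet--Schwartz only for the final identification with $c_0^{\mathrm{SM}}(V)$, which is the part of the statement that genuinely requires it.
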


\begin{remark}[Indices of 1-forms]\index{Index of a 1-form}
The theory of indices of 1-forms on singular varieties is parallel to that for vector fields and it has been developed mostly by W. Ebeling and S. Gusein-Zade in a series of papers. We refer to their survey paper \cite{EG} for an account of this, and to \cite[Ch. 9]{BSS} for an introduction to the subject. 
Given an isolated singularity  germ $(V,p)$ in some $\C^m$ and a holomorphic 1-form in a neighborhood of $p$, its restriction to $V$ is a 1-form on $V$ and its singularities are the points where its kernel is not  transversal to $V$; this includes $p$. There is a radial index for 1-forms with an isolated singularity at $p$, and if $(V,p)$  is an ICIS, then there are also GSV and homological indices, and these coincide. The difference between these and the radial index is the local Milnor number.  There is  a local Euler obstruction as well. 

Using 1-forms instead of vector fields has some advantages; perhaps a main one is that instead of imposing the tangency conditions one needs for vector fields, one needs a transversality condition for 1-forms. This allows, for instance, considering indices for collections of 1-forms. The indices we have envisaged so far morally correspond to the top Chern class, while the collections of 1-forms envisaged by Ebeling and Gusein-Zade corespond to different Chern numbers.
\end{remark}

\begin{remark}[Milnor classes]\index{Milnor class}\label{SS classes}
Classical Chern classes of complex manifolds can be defined as being the primary obstructions to constructing frames with appropriate indices at their singularities. These indices are natural extensions of the Poincaré-Hopf index. Similarly, for singular varieties equipped with a Whitney stratification, one may consider appropriate stratified frames and:
\begin{itemize}
\item One has a corresponding radial index for frames; the classes one gets are the Schwartz-MacPherson classes (see \cite[Ch. 10]{BSS}).
\item If the variety is a local complete intersection in a manifold, one has a corresponding virtual index for frames,  and the classes one gets are the Fulton classes (see \cite[Ch. 10]{BSS}).
\item In the previous setting, the difference between the Fulton and the Schwartz-MacPherson classes are the Milnor classes of $V$. 
\item If $V$ has only isolated singularities, the Milnor class in degree $0$ is the sum of the local Milnor numbers (hence the name of Milnor classes). If $V$ is a hypersurface with non-isolated singular set, the 0-degree Milnor class  is Parusinski’s generalized Milnor number. 
\end{itemize}
Milnor classes arose first in the work of P. Aluffi \cite{Aluffi2, Aluffi1}, though the actual name was coined later (see \cite {CMS-2}).  
\end{remark}

\section{Singular Holomorphic foliations, Baum-Bott  residues and indices }
A  \textit{regular} (smooth or non-singular)  holomorphic foliation $\F$ of dimension $k$ on a complex manifold $X$~\index{Holomorphic foliation !} 
  is given by an open covering $\{U_{\alpha}\}$  of $X$, 
holomorphic  submersions \\ $ f_{\alpha}:U_\alpha \to \mathbb{C}^{n-k}$  and  biholomorphisms 
$$g_{\alpha \beta}:f_\beta(U_{\alpha}\cap U_{\beta})\subset \mathbb{C}^{n-k} \to f_\alpha(U_{\alpha}\cap U_{\beta})\subset \mathbb{C}^{n-k}$$
 such that   $f_\alpha = g_{\alpha \beta}\circ f_\beta$, whenever \( U_{\alpha} \cap U_{\beta} \neq \emptyset \).
 That is (see figure \ref{foliatedfig}), the diagramm 
$$
\xymatrix@R=4pc{ 
U_{\alpha} \cap U_{\beta} \ar[d]_{f_\beta} \ar@{=}[r] & U_{\alpha} \cap U_{\beta} \ar[d]^{f_\alpha} \\
f_\beta(U_{\alpha} \cap U_{\beta}) \subset \mathbb{C}^{n-k} \ar[r]^{g_{\alpha \beta}} & f_\alpha(U_{\alpha} \cap U_{\beta}) \subset \mathbb{C}^{n-k}
}
$$
commutes.

\begin{figure}[h!] 
    \centering
    \includegraphics[width=0.4\textwidth]{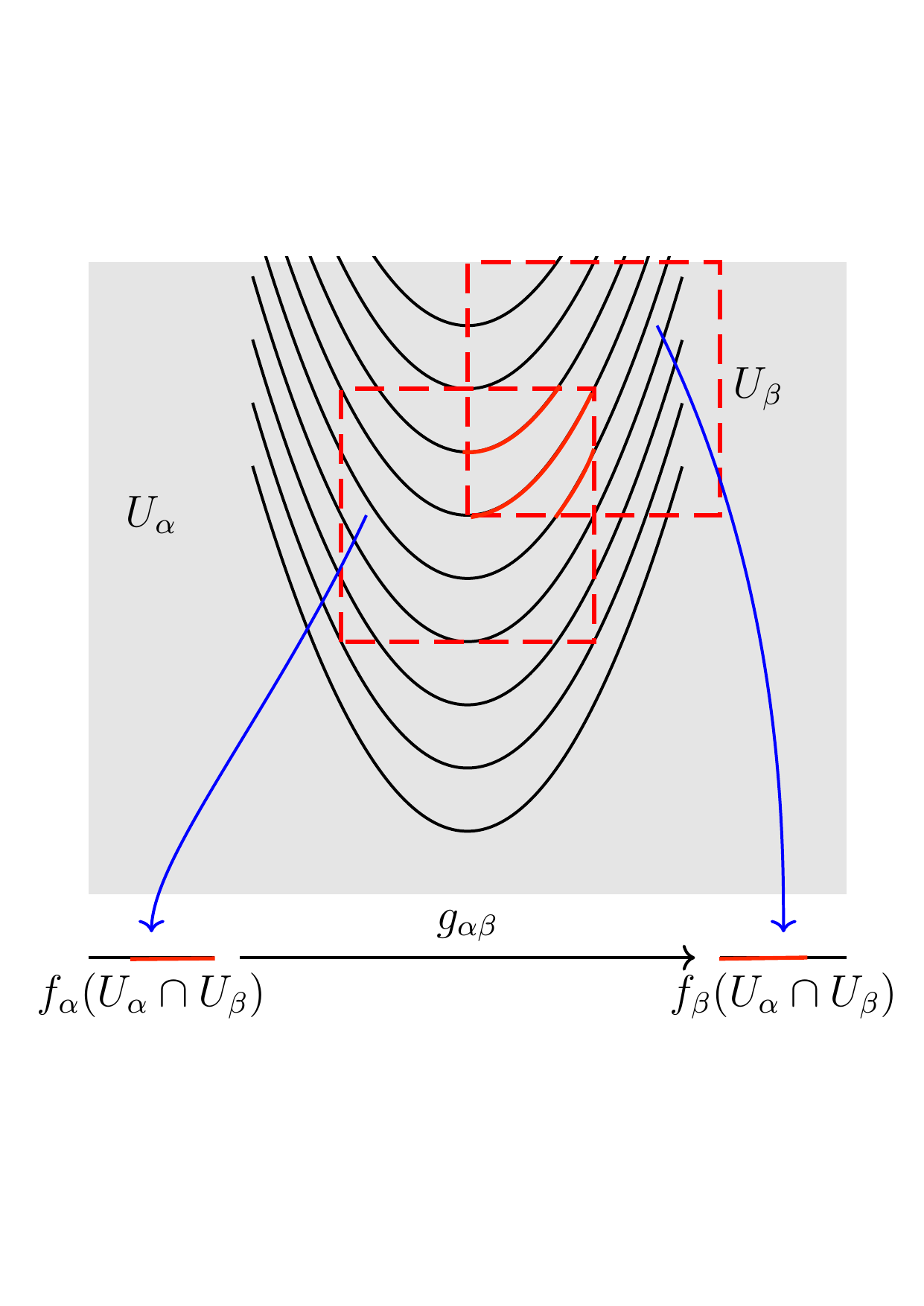}
    \caption{}
    \label{foliatedfig}
\end{figure}
If we set $f_{\alpha}=(f_{\alpha}^1,\dots,f_{\alpha}^{n-k})$, then the tangent of a fiber passing through $p\in U_\alpha$ is: $$T_p(f_{\alpha}^{-1}(f_{\alpha}(p)))=\ker(df_{\alpha})(p)=\{v\in T_pX; i_v(df_{\alpha}^1\wedge \cdots \wedge  df_{\alpha}^{n-k})(p)=0\}.$$ 
This means that the foliation is locally induced by the decomposable  $(n-k)$-form
$$df_{\alpha}^1\wedge \cdots \wedge  df_{\alpha}^{n-k}\in H^0(U_\alpha,\Omega_{U_\alpha}^{n-k}) \,,$$
and on $U_\alpha\cap U_\beta\neq \emptyset$ we have
$$
df_{\alpha}^1\wedge \cdots \wedge  df_{\alpha}^{n-k}= \det(J(g_{\alpha \beta}))  \cdot df_{\beta}^1\wedge \cdots \wedge  df_{\beta}^{n-k},
$$
where $J(g_{\alpha \beta})$ is the jacobian matrix  of the biholomorphic map  $g_{\alpha \beta}$. 
Moreover, 
this gives us a holomorphic vector bundle of rank $k$ defined by $$T\F:=\cup_{\alpha} \ker(df_{\alpha})\subset TX $$ called the tangent bundle of $\F$. On the other hand,
$T\F|_{U_{\alpha}}$ is generated by holomorphic vector fields $v_{\alpha}^1,\cdots,v_{\alpha}^k$, such that $v_{\alpha}^i=\sum_{j=1}^k\Gamma^k_{ij}v_{\alpha}^j$, where $\Gamma^k_{ij}$ are holomorphic functions on $U_{\alpha}$. That is, the foliation is induced by the decomposable holomorphic  $k$-vector field 
$$v_{\alpha}^1\wedge \cdots \wedge v_{\alpha}^k \in H^0(U_\alpha,\wedge^{k}TU_\alpha).$$ 
It follows from the Frobenius theorem that a holomorphic subvector bundle $\mathcal{E}\subset TX$ (called a distribution) of rank $k$   is the tangent bundle of a foliation of dimension $k$ if and only if $[\mathcal{E},\mathcal{E}]\subset \mathcal{E}$. 
So, a holomorphic regular foliation induces an exact sequence of holomorphic vector bundles, 
\begin{equation*}
  0  \longrightarrow T\F \stackrel{\phi}{ \longrightarrow} TX \stackrel{\pi}{ \longrightarrow} N\F  \longrightarrow 0,
\end{equation*}
where the quotient bundle $N\F$ is called the normal bundle of $\F$.  Notice that $\{\det(J(g_{\alpha \beta}))\}$ is a cocycle of $\wedge^{n-k}N\F$ the determinant of $\F$. Then, the foliation $\F$ induces a   nowhere vanishing holomorphic twisted  $(n-k)$-form, $$\omega_{\F}\in H^0(X, \Omega^{n-k}_X\otimes \wedge^{n-k}N\F ) \,,$$ with values in the line bundle $\wedge^{n-k}N\F$, given by $\omega_{\F}|_{U_\alpha}=df_{\alpha}^1\wedge \cdots \wedge  df_{\alpha}^{n-k}$.  Reciprocally,  a      nowhere vanishing twisted holomorphic  $(n-k)$-form $\omega\in H^0(X, \Omega^{n-k}_X\otimes \L )$, with values in  a line bundle $\L$ on $X$, induces a regular holomorphic foliation of dimension $k$, which is locally decomposable  and integrable.  That is, for all $p$ there is an open  neigborhood $U$ and $\omega_1,\dots,\omega_{n-k}\in H^0(U,\Omega^{n-k}_U)$, such that 
$$
\omega|_{U}=\omega_1 \wedge \cdots\wedge \omega_{n-k}
$$
and 
$$
d\omega_i \wedge \omega_1 \wedge  \cdots\wedge \omega_{n-k}=0
$$
for all $i=1,\dots,n-k$. Then, by the Frobenius Theorem, there is a holomorphic map $f=(f^1,\dots,f^{n-k}):U\to \mathbb{C}^{n-k}$, such that 
$$
\mbox{ker}(\omega|_{U})=\{v\in TU; i_v(\omega)=0\}=\mbox{ker}(df)=\mbox{ker}(df^1\wedge \cdots \wedge  df^{n-k})\subset TU. 
$$
Therefore, $\omega$ induces a holomorphic foliation $\F$ of dimension $k$, with  tangent bundle $T\F=\mbox{ker}(\omega)\subset TX$. Moreover,  for  every $g\in H^0(X,\mathcal{O}_X^*)$ the form $\Tilde{\omega}=g\omega$, yields the same foliation. In particular, if $X$ is compact, then the  foliation $\F$ is represented by a point on the projective space
$$
\mathbb{P} H^0(X, \Omega^{n-k}_X\otimes \L ). 
$$
In general,  we can define.

\begin{definition}\index{Pfaff system ! regular}
    A \textit{regular Pfaff system} of dimension $k$ on  a \textit{compact} manifold $X$ is  a point $[\omega]\in \mathbb{P} H^0(X, \Omega^{n-k}_X\otimes \L )$,  for some line bundle  $\L$,  such that its singular set is $\mbox{Sing}(\omega):=\{\omega=0\}=\emptyset$. 
\end{definition}
So, a regular Pfaff system $\omega$ is a regular foliation if and only if  $\omega$ is \textit{locally decomposable} and \textit{integrable}.


\subsection{Singular holomorphic foliations and Pfaffian systems}
Let $X$ be a complex manifold of dimension $n$. A singular holomorphic   \emph{distribution} $\sF$, of dimension $k$,  on $X$ is given by an exact sequence
\begin{equation*}
\F:\  0  \longrightarrow T\F \stackrel{\phi}{ \longrightarrow} TX \stackrel{\pi}{ \longrightarrow} N{\F}  \longrightarrow 0,
\end{equation*}
where $N\F$ is a torsion-free sheaf, called the \emph{normal sheaf} of $\sF$, and $T\F$ is a reflexive sheaf, of generic rank $k$, called the \emph{tangent sheaf} of $\sF$.

\begin{definition}\index{Holomorphic foliation ! singular}
    A \emph{singular holomorphic  foliation} of dimension $k$ is a distribution $\sF$, of dimension $k$, which is involutive, i.e. $[T\F, T\F] \subset T\F$.
The \textit{singular set} of the foliation  $\F$ is the singular set of its normal sheaf
$$
\operatorname{Sing}(\mathcal{F}) := \operatorname{Sing}(N\mathcal{F}) = \{p \in X \mid (N\mathcal{F})_p \text{ is not locally free}\}.
$$
\end{definition}

Since $ N\mathcal{F}$ is torsion-free, the singular set of $\F$ has codimension $\geq 2$.  The \emph{singular scheme}\index{Holomorphic foliation ! singular scheme} of $\F$ is defined as follows. Taking the maximal exterior power of the dual morphism $\phi^\vee:\Omega^1_X\to T\sF^*$ we obtain a morphism 
$$\wedge^k \phi^\vee: \Omega^{k}_X\to \det(T\F)^*
$$   and an induced morphism $\Omega^{k}_X\otimes \det(T\F)\to \mathcal{O}_X$ whose    image   is an ideal sheaf $\mathcal{I}_Z$ of a subscheme $Z\subset X$. Moreover, such a morphism induces a global section $$\nu_{\F}\in H^0(X, \wedge^k TX\otimes \det(T\F)^*)$$
which is called the \textit{Pfaff field} induced  by $\F$. This means that on a neighborhood $U_p$ of a point $p$ such that $T\F$ is locally free and  generated by vector fields $v_1,\dots,v_k$, then $\nu_{\F}|_{U_p}=v_1\wedge\cdots \wedge v_k$ and $[v_i,v_j]=\sum_{j=1}^k\Gamma^{k}_{ij}v_k$, for all $i,j,k=1,\dots,n$, where   $\Gamma^{k}_{ij}\in \mathcal{O}_{U_p}$ and $v_i,v_j\in H^0(U_p, TU_p)$. Therefore, a singular holomorphic foliation $\F$  of dimension $k$, induces a regular foliation $\F^0$ of dimension $k$ on $X^0:=X-\mbox{Sing}(\F)$.   

The line bundle $K\F:=\det(T\F)^*$ is called the \textit{canonical bundle} of $\F$. So, By the isomorphism $\wedge^k TX\simeq \Omega_X^{n-k}\otimes \det(TX)$ and the fact that $\det(N\F)\simeq \det(TX)\otimes K\F$  we conclude that the isomorphism
$$ \wedge^k TX\otimes K\F\simeq \Omega_X^{n-k} \otimes 
\det(TX)\otimes K\F\simeq 
\Omega_X^{n-k} \otimes\det(N\F)$$
says   that the section $\nu_{\F}$ corresponds to a twisted $(n-k)$-form $$\omega_{\F}\in H^0(X, \Omega_X^{n-k}\otimes\det(N\F))$$ with values on $\det(N\F)$ which yields  a \textit{Pfaff system},   of   dimension $(n-k)$,   associated with the corresponding regular foliation  $\F^0$ on the regular part $X^0=X-\mbox{Sing}(\F)$.

So, in a neighborhood $U_p$ of a point $p$ where $N\mathcal{F}^*$ is locally free, we have
\[
\omega_{\mathcal{F}}|_{U_p} = \omega_1 \wedge \cdots \wedge \omega_{n-k},
\]
where $ \omega_i \in H^0(U_p, \Omega^1_{U_p}) $ satisfies the integrability condition
\[
d\omega_j \wedge \omega_1 \wedge \cdots \wedge \omega_{n-k} = 0
\]
for all $ j = 1, \dots, n-k $. Therefore,
$$
\operatorname{Sing}(\mathcal{F})=\{\nu_{\mathcal{F}}=0\}=\{\omega_{\mathcal{F}}=0\}. 
$$
\begin{definition} \index{Pfaff system ! singular}
Let $X$ be a compact complex manifold of dimension $n$. 
    A \textit{Pfaff system}  of dimension $k$ on $X$ is an element $$[\omega] \in \mathbb{P} H^0(X, \Omega_X^{n-k}\otimes \mathcal{L}),$$  where $\mathcal{L}$ is a holomorphic line bundle on $X$, such that its  singular set $\operatorname{Sing}(\omega):=\{\omega=0\}$ has codimension $\geq 2$. 
\end{definition}

\begin{remark} 
Note that a holomorphic foliation of dimension $k$ induces a Pfaff system of codimension $n-k$ that is generically decomposable. However, in general, a Pfaff system does not need to be generically decomposable. An example of a non-decomposable Pfaffian system is given by the symplectic $2$-form  in \( \mathbb{C}^{2m} \). In canonical coordinates \( (q_1, p_1, \dots, q_m, p_m) \in \mathbb{C}^{2m} \), this symplectic form  is expressed as
\[
\omega = \sum_{i=1}^m dq_i \wedge dp_i. 
\]
\end{remark}

\begin{example}
    \label{folPn}
A holomorphic foliation $\sF$ on $\pn$ of dimension $k$ corresponds to a twisted $(n-k)$-form $\omega \in H^0(\pn, \Omega_{\pn}^{n-k}(d+n-k+1)$, where $d\geq 0$ is called the \emph{degree} of $\sF$. We  can interpret $\omega$ as a polynomial differential form
\[
\omega = \sum_{1\leq i_1< \cdots < i_{n-k}\leq n} A_{i_1\cdots i_p}dx_{i_1}\wedge \cdots \wedge dx_{i_q}
\]
where the $ A_{i_1\cdots i_p}$ are homogeneous of degree $d+1$, and  $\iota_{R} \omega = 0$, where  $$\mbox{R} = x_0\frac{\partial}{\partial x_0} + \cdots + x_n\frac{\partial}{\partial x_n}$$ is the radial vector field.  The local decomposability translates to the Pl\"{u}cker conditions
$$
    ( {i}_v\omega) \wedge \omega = 0 \quad \text{ for every }v
 \in \wedge^{n-k-1}(\mathbb C^{n+1}) ,
$$
and the integrability condition becomes
$$
     ( {i}_v\omega) \wedge d\omega = 0 \quad \text{ for every }v
 \in \wedge^{n-k-1}(\mathbb C^{n+1}) .
$$
\end{example}
For more details, we refer the reader to \cite[Section 1.3]{CukiermanPereira2008}.

\begin{example}
Let $\psi: X \to Y$ be a holomorphic fibration, where $Y$ is a complex manifold of dimension $n - k > 0$. Then $\psi$ induces a holomorphic foliation $\mathcal{F}$ on $X$ of dimension $k$, whose tangent sheaf is the relative tangent sheaf of $\psi$, and whose singular set corresponds to the singularities of the fibration.
\end{example}

\begin{definition} \index{Invariant variety}
 Let $\omega \in  H^0(X, \Omega_X^{n-k}\otimes \mathcal{L})$ be a Pfaff system of dimension $k$ on a complex manifold $X$. 
We say that an analytic subvariety $V \subset X$ is  {\it invariant} by $\omega$ if 
$
i^*\omega \equiv 0,
$
where $i: V\hookrightarrow X$ is the inclusion map. If $\mathcal{F}$ is a holomorphic foliation of dimension $k$ on $X$, then $V$ is invariant by $\mathcal{F}$ if it is invariant by its corresponding Pfaff system 
$$
\omega_{\mathcal{F}} \in H^0(X, \Omega_X^{n-k} \otimes \det(N\mathcal{F})).
$$

\end{definition}
\begin{example}
 Consider the foliation $\F$ in  $\mathbb{C}^2$ induced by the holomorphic 1-form $\omega = x(5y^2 - 9) \, dx - y(5x^2 - 9) \, dy.
$ This foliation has 5 singular points  given by the singular  points  of the invariant  algebraic curve $C=\{(x^2-y^2)\cdot (x^2+4y^2-9)=0\}$. See figure \ref{fig:exeflow}.   
\end{example}
\begin{figure}[h!] 
    \centering
    \includegraphics[width=0.4\textwidth]{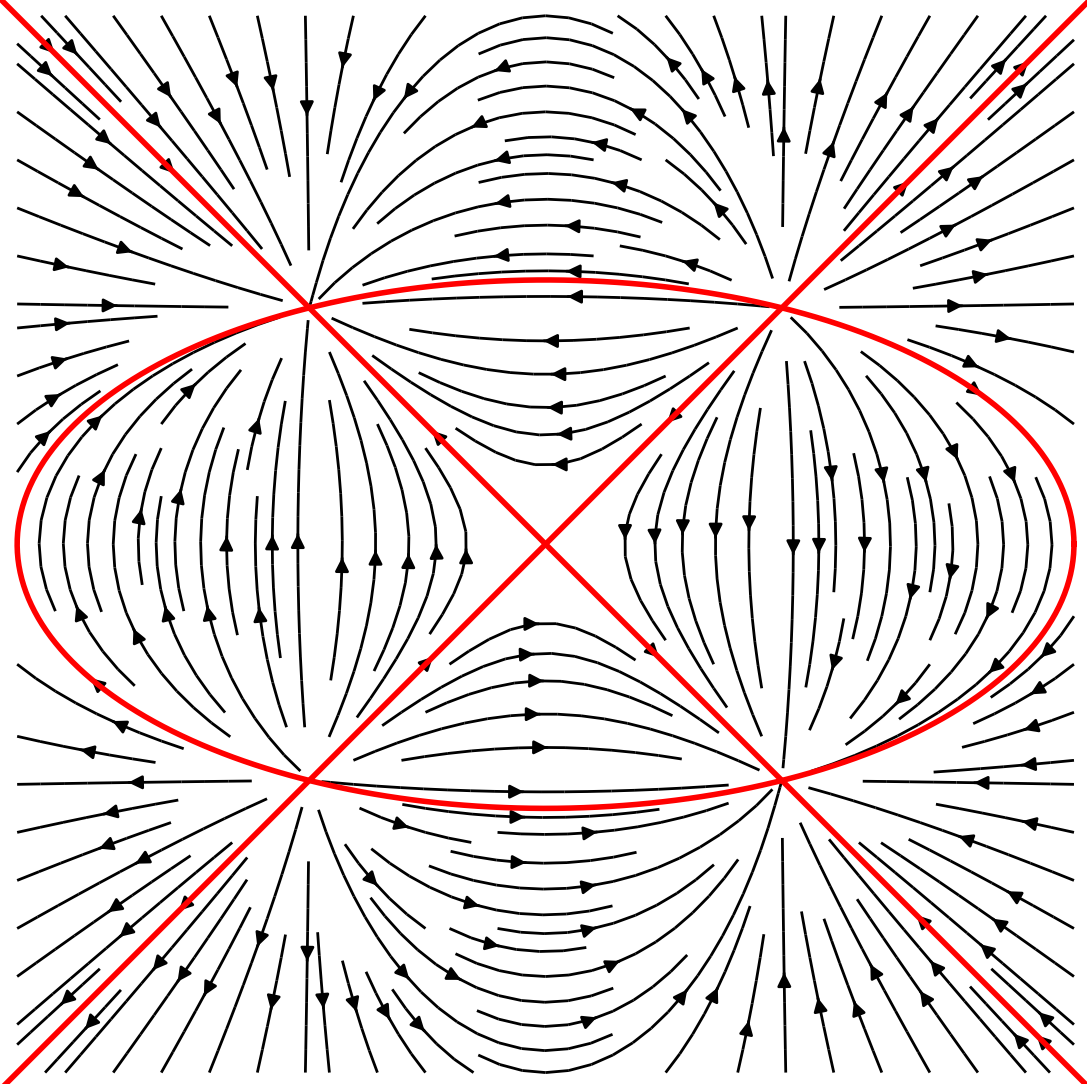}
    \caption{}
    \label{fig:exeflow}
\end{figure}

\begin{example}
In this example, we examine the topology, via the Milnor fibration,  of  a  foliation on $\mathbb{C}^2$ induced by an exact holomorphic 1-form $\omega = df$, i.e., when the foliation admits a holomorphic first integral. Suppose that the foliation is induced by the 1-form  $d(x^3-y^2)$ with an isolated singularity at $0$.
This foliation is tangent to the fibers of  the polynomial  function $f: \mathbb{C}^2\to \mathbb{C} $, given by $f(x,y)=x^3-y^2$, and it has  the invariant (separatrix) curve $f^{-1}(0):=C$ with isolated singularity at $0$.
Pick the  point $0 \in C$ and a small
enough $\epsilon$-ball  $B_\epsilon(0)$ in 
 $\mathbb{C}^2$ centered at the origin  $0$, with boundary the  3-sphere  $S_\epsilon(x)$. Then by Milnor \cite{Mi1},  $B_\epsilon(0)\cap C$  is homeomorphic to the cone on the real link  $S_\epsilon(0)\cap C$. In  this case,
  the link $S_\epsilon(0)\cap C\simeq \mathbb{S}^1$ is the trefoil knot. If we take a point $x_1\in D_\delta(0)-\{0\} \subset \mathbb{C}$, with $|x_1| \ll \delta \ll \epsilon\leq 1 $, then on the smooth curve $f^{-1}(x_1)$ we have also  that $S_\epsilon(0)\cap f^{-1}(x_1)$ is the trefoil knot, while the leaves of the foliation contained within the ball, which are the Milnor fibers, have the homotopy of a bouquet of 2 circles, $\simeq \mathbb{S}^1\vee \mathbb{S}^1$.   See figure \ref{MF}.
\end{example}


\subsection{Baum--Bott residues}
In this section we recall the Baum–Bott residue theorem for singular foliations on complex manifolds.  A starting point is Bott’s vanishing theorem presented by R. Bott in the 1970 ICM in Nice (see \cite{Bott1970}), where he discussed the state of the art in the problem of “foliating a manifold”. As he explains, this can be viewed in two steps:

\nn
{\bf Step 1: }
Given a (smooth or complex) $n$-manifold, construct a $k$-dimensional subbundle $E$ of the tangent bundle $TM$; and then, 

\nn {\bf Step 2:}  Given  a subbundle $E$ of $TM$ as above,  can it be deformed to an integrable one?

In the complex case we want $E$ to be   holomorphic.

Bott then gives a beautiful answer, which in the complex case can be stated as follows:

\begin{theorem} [Bott’s vanishing theorem]\label{Bott theorem}\index{Bott Vanishing Theorem}
Let $M$ be a compact complex analytic $n$-manifold which admits an integrable holomorphic subbundle $E$. Let $Q= TM/E$ be the quotient bundle, that one calls the normal bundle. Then the Chern ring $TM/E$, generated by the real Chern classes of $TM/E$, vanishes in dimensions greater that $2 \times {\rm dim}_\C \,(TM/E)$. That is,
$$Chern^r(TM/E) = 0 \; \quad  \forall \, r > {\rm dim}_\R \,(TM/E)\;.$$
\end{theorem}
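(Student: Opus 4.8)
The plan is to prove the theorem by Chern--Weil theory: I would exhibit a $C^\infty$ connection on the normal bundle $Q = TM/E$ whose curvature ``sees'' only the transverse directions, and then conclude by a degree count. Write $k = \dim_\C E$, so that $\dim_\C Q = n-k$ and the asserted vanishing is above real dimension $2(n-k)$.

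First I would construct the \emph{Bott partial connection} on $Q$ along $E$. For a local section $\bar s$ of $Q$, lift it to a section $s$ of $TM$, and for $X \in \Gamma(E)$ set $\nabla_X \bar s := \pi([X,s])$, where $\pi \colon TM \to Q$ is the projection. The integrability hypothesis $[E,E]\subset E$ is exactly what makes this independent of the chosen lift $s$ and guarantees the Leibniz rule, so $\nabla$ is a well-defined partial connection in the leaf directions; the Jacobi identity together with involutivity then shows it is flat along $E$. Next I would extend this to a global $C^\infty$ connection $\nabla$ on $Q$ by a partition of unity, arranging that its $(0,1)$-part agrees with the canonical $\bar\partial_Q$ coming from the holomorphic structure of $Q$, while it still restricts to the Bott connection along $E$.

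The payoff of this compatibility is that the curvature $K = \nabla^2$ becomes a $(1,1)$-form with values in $\mathrm{End}(Q)$ satisfying $\iota_X K = 0$ and $\iota_{\bar X} K = 0$ for every $X \in \Gamma(E)$. Using Frobenius to choose local coordinates $(z_1,\dots,z_k,y_1,\dots,y_{n-k})$ in which $E$ is spanned by $\partial/\partial z_1,\dots,\partial/\partial z_k$, this means that locally
$$K = \sum_{i,j=1}^{n-k} K_{i\bar j}\, dy_i \wedge d\bar y_j,$$
with $\mathrm{End}(Q)$-valued coefficients $K_{i\bar j}$: the curvature involves only the $n-k$ transverse differentials. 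Now I would invoke Chern--Weil. For any $\mathrm{GL}(n-k,\C)$-invariant homogeneous polynomial $\phi$ of degree $r$, the closed $2r$-form $\phi(K)$ represents, up to a universal constant, the corresponding real characteristic class of $Q$. Expanding $\phi(K)$ as a degree-$r$ polynomial in the entries of $K$ yields a sum of terms, each a wedge of $r$ factors of the shape $dy_i \wedge d\bar y_j$; since only $n-k$ distinct holomorphic differentials $dy_i$ are available, every such term vanishes once $r > n-k$. Hence $\phi(K)\equiv 0$ whenever $r > \dim_\C Q$, and the same count annihilates any product of Chern forms of total form-degree exceeding $2(n-k)$. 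Therefore every element of the Chern ring of $Q$ in real dimension $> 2\dim_\C Q$ is represented by a vanishing form, which proves the theorem.

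I expect the main obstacle to be the construction of the connection in the second paragraph: producing a single $\nabla$ whose curvature is genuinely \emph{basic} for the real foliation $E\oplus\bar E$, i.e. annihilated by contraction with both $X$ and $\bar X$ for $X\in\Gamma(E)$. Controlling the $(1,0)$-leg along $E$ is immediate from the Bott connection, but simultaneously forcing the $(0,1)$-leg to avoid $E$ requires a careful use of the holomorphic structure of $Q$, and this reconciliation is the technical heart of the argument.
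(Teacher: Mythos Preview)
The paper does not supply its own proof of this theorem; it is quoted from Bott's 1970 ICM address and the text moves immediately to consequences (the corollary on vanishing Chern numbers and the Baum--Bott generalization). So there is nothing to compare against directly, and the relevant question is whether your argument stands on its own.

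The architecture you propose --- Bott partial connection, extension to a global connection, Chern--Weil degree count --- is the classical one and is correct. There is, however, an overclaim in the middle that you yourself flag as the obstacle, and its resolution is both easier and slightly different from what you describe. With $\nabla^{0,1}=\bar\partial_Q$ and $\nabla|_{E^{1,0}}$ equal to the Bott connection, you will \emph{not} in general obtain a purely $(1,1)$ curvature, nor will you get $\iota_{\bar X}K=0$ for $\bar X\in E^{0,1}$; the $(2,0)$-part of $K$ and the $dy^\alpha\wedge d\bar z^j$ components of $K^{1,1}$ may survive. What you \emph{do} get, automatically and globally, is
\[
K^{0,2}=0,\qquad K^{2,0}\big|_{E\times E}=0,\qquad K^{1,1}(X,\,\cdot\,)=0\ \text{for every }X\in E^{1,0},
\]
the last identity because the Bott operator and $\bar\partial_Q$ commute (check this in a holomorphic flat chart $(z,y)$: both act as coordinate derivatives on coefficients in the frame $\overline{\partial_{y^\alpha}}$, and $K$ is tensorial). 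These three facts force every entry of $K$ to lie in the ideal $J=(dy^1,\dots,dy^{n-k})$ generated by the \emph{holomorphic} conormal forms alone --- only $n-k$ generators, not $2(n-k)$. Since $J^{\,n-k+1}=0$ in the exterior algebra, any degree-$r$ invariant polynomial in $K$ vanishes once $r>n-k$, and your concluding degree count goes through verbatim. In short, the obstacle you identify is genuine, but you do not need $K$ to be basic for $E\oplus\bar E$; you only need $K\in J$, and that comes for free from the compatibility of the Bott connection with $\bar\partial_Q$.
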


This provides topological obstructions for the integrability of a distribution on a manifold. It is worth saying that there is also previous work by Bott (e.g. \cite{Bott1967}) where he  relates the behavior of a holomorphic vector field at its zeroes   to the characteristic numbers of complex manifolds and holomorphic  bundles. 

As an example (from \cite{Bott1970}),  consider a compact complex $n$-manifold $M$ with vanishing Euler characteristic. 
This means that its top Chern class $c_n(M)$ vanishes as well, and also that by the theorem of Poincaré-Hopf, we can construct on $M$ a nowhere vanishing $C^{\infty}$ vector field $v$. This spans a  smooth  trivial 1-dimensional complex sub-bundle $E$ of $TM$ and one has a $C^{\infty}$ ssplitting $TM \cong E \oplus Q$ where $Q = TM/E$ is the normal bundle. Hence, since $E$ is a trivial bundle,  all the Chern numbers of $M$ are degree $n$ products of the Chern classes of $Q$, evaluated on the fundamental cycle $[M]$.

Now suppose that the vector field $V$ actually is holomorphic, so it is integrable. Then one has a 1-dimensional holomorphic foliation $\mathcal E$ with tangent bundle $E$.  Then, by Bott’s theorem, all degree $n$ products of Chern classes of $Q$ vanish. Hence, by the above observations, all the Chern numbers of $M$ vanish, so we have:

\begin{corollary}
If a compact complex manifold admits a nowhere vanishing holomorphic vector field, then all its Chern numbers are $0$. 
\end{corollary}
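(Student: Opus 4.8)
The plan is to assemble the corollary from three ingredients already laid out in the excerpt: Bott's vanishing theorem (Theorem \ref{Bott theorem}), the splitting construction described in the preceding paragraph, and the elementary behavior of Chern classes under a direct sum with a trivial bundle. The hypothesis gives us a nowhere vanishing \emph{holomorphic} vector field $v$ on the compact complex $n$-manifold $M$. First I would observe that $v$ spans a holomorphic line subbundle $E \subset TM$; since $v$ is holomorphic it is integrable, so $E$ is an integrable holomorphic subbundle of rank $1$, defining a $1$-dimensional holomorphic foliation $\mathcal E$ with $T\mathcal E = E$. This is precisely the setting in which Theorem \ref{Bott theorem} applies, with quotient $Q = TM/E$ of complex rank $n-1$.

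Next I would extract the conclusion of Bott's theorem in the form we need. The theorem says the Chern ring generated by the classes of $Q$ vanishes in real dimension greater than $2\,\dim_{\C} Q = 2(n-1)$; equivalently, every product of Chern classes of $Q$ of total complex degree $> n-1$ vanishes in $H^*(M;\R)$. In particular any degree-$n$ polynomial in the $c_i(Q)$ is zero, since $n > n-1$. The second ingredient is that $E$ is a holomorphic line bundle admitting a nowhere vanishing holomorphic section $v$, hence $E$ is trivial and $c(E) = 1$. Because the zeros of $v$ are empty, the $C^{\infty}$ (indeed holomorphic) splitting $TM \cong E \oplus Q$ gives the Whitney formula $c(TM) = c(E)\,c(Q) = c(Q)$, so $c_i(M) = c_i(Q)$ for all $i$.

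It then remains to conclude about Chern \emph{numbers}. A Chern number of $M$ is the evaluation on the fundamental class $[M]$ of a weighted-homogeneous monomial $c_{i_1}(M)\cdots c_{i_r}(M)$ of total degree $i_1 + \cdots + i_r = n$. Substituting $c_i(M) = c_i(Q)$, each such monomial is a degree-$n$ product of Chern classes of $Q$, which vanishes by the previous paragraph; evaluating the zero class on $[M]$ gives $0$. Hence every Chern number of $M$ is zero, as claimed.

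I expect the only real subtlety to be bookkeeping: making sure that the triviality $c(E)=1$ together with the Whitney sum identity is used correctly, and that the degree count $n > \dim_{\C} Q = n-1$ is the precise inequality invoked in Bott's vanishing statement. Everything else is formal, since the construction of $E$, the splitting, and the reduction of Chern numbers to products of classes of $Q$ were already spelled out in the paragraph preceding the corollary; the proof is essentially a matter of citing Theorem \ref{Bott theorem} once the top-degree products of $c_i(Q)$ have been identified with the Chern numbers of $M$.
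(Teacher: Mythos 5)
Your proof is correct and follows essentially the same route as the paper: span a trivial (integrable) line subbundle $E$ by the nowhere vanishing holomorphic field, split $TM \cong E \oplus Q$ so that Chern numbers of $M$ become degree-$n$ products of Chern classes of $Q$, and kill those by Bott's vanishing theorem since $n > \dim_{\C} Q = n-1$. The only cosmetic difference is that the paper phrases the splitting step as a preliminary remark about manifolds with $\chi(M)=0$ before adding holomorphy, whereas you argue directly from the holomorphic hypothesis.
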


Bott’s theorem \ref{Bott theorem} was later extended by Baum-Bott to higher dimensional foliations, allowing also singularities. If there are no singularities, then the theorem states that the 
   ring $Chern^r(Q)$, generated by the real Chern classes of the normal bundle, vanishes  in dimensions greater that the real codimension of the foliation. In the presence of singularities, the theorem says that each such class, regarded in homology via Alexander duality, localizes as a sum of residues, which are homology classes of the singular set. More precisely:

\begin{theorem}[Baum-Bott]\cite{Baum2} \index{Residue ! Baum-Bott}
Let $\mathcal{F}$ be a holomorphic foliation of dimension $k$ on a complex manifold $X$, and let $\varphi$ be a homogeneous symmetric polynomial of degree $d$ satisfying $n-k < d \leq n$. Let $Z$ be a compact connected component of the singular set $\mathrm{Sing}(\mathcal{F})$. Then, there exists a homology class $\mathrm{Res}_{\varphi}(\mathcal{F}, Z) \in \mathrm{H}_{2(n - d)}(Z; \mathbb{C})$ such that:
\begin{enumerate}
    \item[$(i)$] $\mathrm{Res}_{\varphi}(\mathcal{F}, Z)$ depends only on $\varphi$ and on the local behavior of the leaves of $\mathcal{F}$ near $Z$. 
    \item[$(ii)$] Suppose that $X$ is compact, and define $\mathrm{Res}({\varphi}, \mathcal{F}, Z) := \alpha_{\ast} \mathrm{Res}_{\varphi}(\mathcal{F}, Z)$,
    where $\alpha_{\ast}$ is the composition of the maps
    \[
    \mathrm{H}_{2(n - d)}(Z; \mathbb{C}) \stackrel{i_{\ast}}{\longrightarrow} \mathrm{H}_{2(n - d)}(X; \mathbb{C}) \quad \text{and} \quad \mathrm{H}_{2(n - d)}(X; \mathbb{C}) \stackrel{P}{\longrightarrow} \mathrm{H}^{2d}(X; \mathbb{C}),
    \]
   \noindent  where $i_{\ast}$ is the induced map from the inclusion $i : Z \longrightarrow X$, and $P$ is the Poincaré duality map. Then 
    \[
    \varphi (\mathcal{N}_{\mathcal{F}}) = \sum_{Z} \mathrm{Res}({\varphi}, \mathcal{F}, Z).
    \]
\end{enumerate}
\end{theorem}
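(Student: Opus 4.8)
The plan is to prove this by Chern--Weil theory, combining Bott's vanishing theorem (Theorem~\ref{Bott theorem}) with a \v{C}ech--de~Rham localization argument. The key numerical observation is that the rank of the normal bundle $N\mathcal{F}$ equals the codimension $n-k$ of the foliation, so Bott's theorem tells us that on the regular part $X^0 := X \setminus \sing(\mathcal{F})$ every characteristic class $\varphi(N\mathcal{F})$ of degree $d > n-k$ vanishes. The hypothesis $n-k < d \le n$ is exactly what makes this vanishing available on $X^0$ while leaving the class potentially nonzero globally; this tension between local vanishing and global nontriviality is what forces a residue to be concentrated on $\sing(\mathcal{F})$.

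First I would fix a \emph{Bott connection} $\nabla^{0}$ on $N\mathcal{F}|_{X^0}$. The regular foliation $\mathcal{F}^0$ furnishes a partial holomorphic connection along the leaves, compatible with Frobenius integrability, whose curvature is annihilated upon contraction with leaf-tangent directions. For such a connection the Chern--Weil form $\varphi(\nabla^0)$ vanishes identically \emph{as a differential form} when $\deg\varphi = d > n-k$; this is the differential-form refinement of Theorem~\ref{Bott theorem}. Separately, over a neighborhood of $Z$ (crossing the singularities) I would choose an arbitrary connection $\nabla^{1}$ on $N\mathcal{F}$, whose Chern--Weil form $\varphi(\nabla^1)$ is a closed form representing $\varphi(N\mathcal{F})$.

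The localization then comes from the \emph{Bott difference form} (transgression) $\varphi(\nabla^0,\nabla^1)$, defined on $X^0$, which satisfies $d\,\varphi(\nabla^0,\nabla^1) = \varphi(\nabla^1) - \varphi(\nabla^0) = \varphi(\nabla^1)$, the last equality being precisely Bott vanishing. Covering a neighborhood $W$ of $Z$ by $U_0 = W \setminus Z$ and a neighborhood $U_1$ of $Z$, the pair $\big(\varphi(\nabla^1),\,\varphi(\nabla^0,\nabla^1)\big)$ is a cocycle in the \v{C}ech--de~Rham complex computing the relative cohomology $H^{2d}(W, W\setminus Z;\C)$, and hence, via the Alexander--Lefschetz duality isomorphism $H^{2d}(W, W\setminus Z;\C)\cong H_{2(n-d)}(Z;\C)$ in the oriented manifold $X$, determines the class $\Res_\varphi(\mathcal{F},Z)$. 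A standard argument shows the transgression form, and therefore this class, is independent of the auxiliary choices and depends only on the germ of $\mathcal{F}$ along $Z$, which gives statement $(i)$.

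For statement $(ii)$, with $X$ compact the closed form $\varphi(\nabla^1)$ represents $\varphi(N\mathcal{F}) \in H^{2d}(X;\C)$, while the relative construction records, component by component, the contribution of each compact $Z \subset \sing(\mathcal{F})$. The decomposition $\varphi(N\mathcal{F}) = \sum_Z \Res(\varphi,\mathcal{F},Z)$ is then the image under the map $\alpha_\ast$ of the \v{C}ech--de~Rham identity asserting that a globally defined class equals the sum of its localized pieces. I expect the main obstacle to be the localization step: setting up the double complex so that the difference form yields a \emph{well-defined} relative class in $H_{2(n-d)}(Z;\C)$ independent of connections and of the neighborhood $W$, and checking that the duality identifications are compatible with the composition $\alpha_\ast$ appearing in the statement. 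The Bott vanishing is the conceptual heart, but the bookkeeping of the \v{C}ech--de~Rham localization is where the technical care is required.
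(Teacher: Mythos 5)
The paper does not prove this theorem: it is quoted verbatim from Baum--Bott \cite{Baum2}, and the surrounding text only records how to \emph{compute} the residues in special cases (Grothendieck residues for isolated singularities of foliations by curves, the transversal-slice formula under the Kupka-type genericity hypothesis, Vishik's proof under the assumption that $T\F$ is locally free). So your proposal has to be judged on its own merits, and while it correctly identifies the two conceptual pillars of every known proof --- the form-level Bott vanishing for a Bott connection on $N\F|_{X^{0}}$, and the \v{C}ech--de~Rham / Alexander-duality localization of the resulting relative class in $H^{2d}(W,W\setminus Z;\C)\cong H_{2(n-d)}(Z;\C)$ --- it skips over the step that makes the general theorem genuinely hard.

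The gap is in the sentence ``over a neighborhood of $Z$ (crossing the singularities) I would choose an arbitrary connection $\nabla^{1}$ on $N\F$.'' For a singular holomorphic foliation of dimension $k$ in the sense of this paper, $N\F$ is only a torsion-free coherent sheaf; it fails to be locally free exactly along $\sing(\F)$, so there is no vector bundle on a neighborhood of $Z$ on which to put $\nabla^{1}$, and indeed the very symbol $\varphi(N_{\F})$ requires a definition (via locally free resolutions of the coherent sheaf, as in Baum--Bott's vector-bundle-complex machinery, or via the virtual bundle $TX-T\F$ when $T\F$ is locally free, which is Vishik's hypothesis). Your two-connection argument is essentially complete in the cases where this issue does not arise --- e.g.\ codimension-one or dimension-one foliations where the relevant normal object is a line bundle defined across the singularities, which is exactly the setting of Section \ref{S. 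Brunella} --- and it is the skeleton of Suwa's treatment in \cite{Suwa-Herman}. But for the theorem as stated, extending the Chern--Weil data across $\sing(\F)$ (and checking that the localized class is independent of the chosen resolution) is the substantive content, not mere bookkeeping, and your proposal does not address it. A secondary, smaller point: you should justify that the Bott partial connection along the leaves of $\F^{0}$ extends to a genuine connection on $N\F|_{X^{0}}$ whose Chern--Weil forms $\varphi(\nabla^{0})$ vanish identically for $d>n-k$; this is standard but it is the place where integrability of $\F$ (and not just the existence of the distribution) enters.
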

\begin{figure}[h!] 
    \centering
    \includegraphics[width=0.6\textwidth]{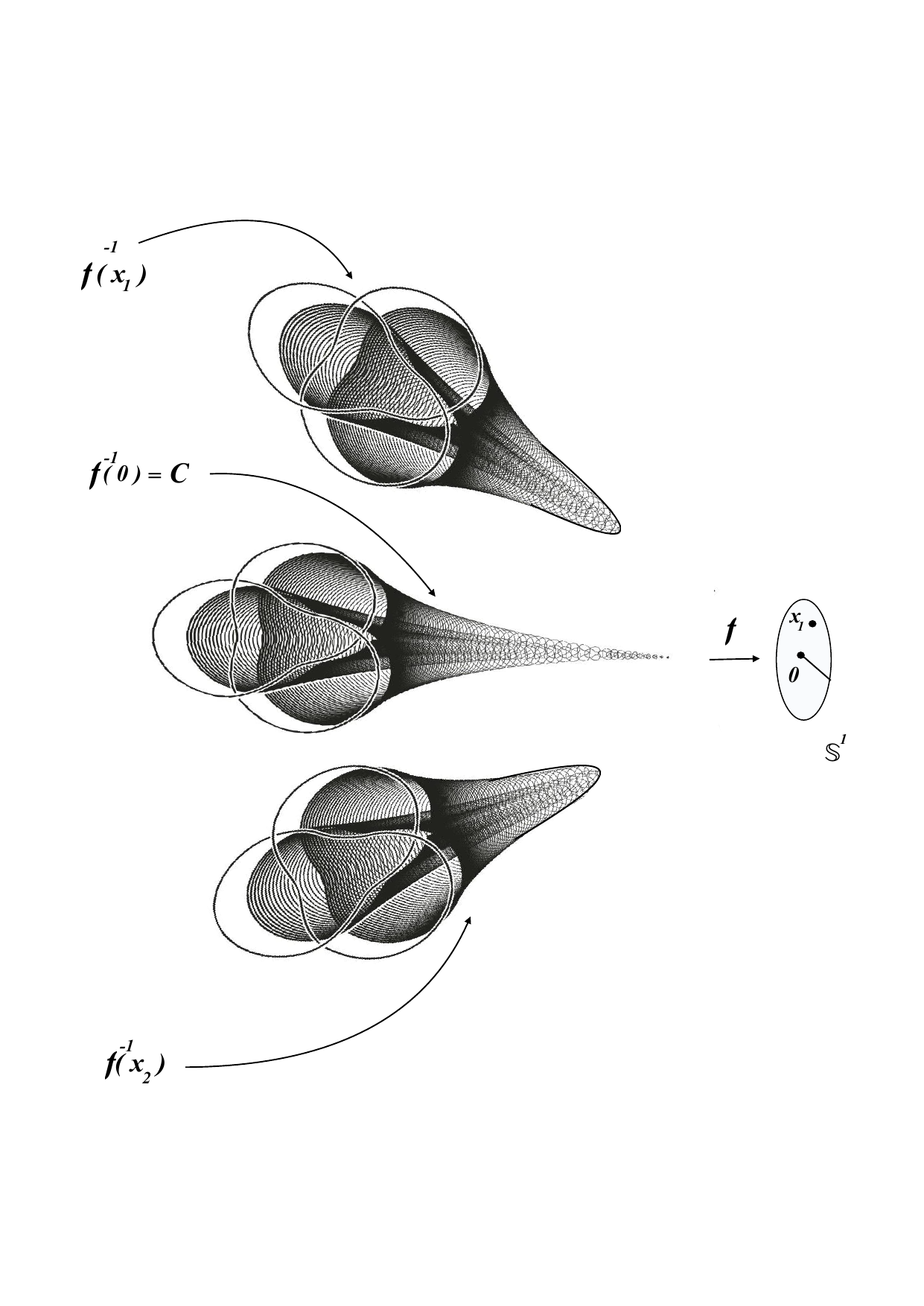}
    \caption{}
    \label{MF}
\end{figure}

If the foliation has dimension one and isolated singularities, the residues can be expressed in terms of  Grothendieck residues (see \cite{Baum1}).  More precisely, in this case, for each $p \in \operatorname{Sing}(\mathcal{F})$, an isolated singularity of $\mathcal{F}$, we have
\begin{eqnarray}\label{0001}
\operatorname{Res}(\varphi,\mathcal{F},p) = \operatorname{Res}_p\left[\begin{array}{cccc} \varphi(Jv)\\ v_1,\ldots,v_n\end{array}\right ],
\end{eqnarray}
where $v = (v_1, \ldots, v_n)$ is a germ of a holomorphic vector field on  $(U,p)$, serving as a local representative of $\mathcal{F}$,  $Jv$ is its Jacobian matrix, and 
$$
\operatorname{Res}_p\left[\begin{array}{cccc} \varphi(Jv)\\ v_1,\ldots,v_n\end{array}\right ]= \left(\frac{1}{2\pi \sqrt{-1}}\right)^n\int_{\Gamma}\frac{\varphi(Jv)}{v_1,\ldots,v_n},
$$
denotes  the Grothendieck residue of $\varphi(Jv)$ with respect to $v_1, \ldots, v_n$  at $p$, and $\Gamma$ is the cycle $\{x\in U; |v_i(x)|=\epsilon_i; \ i=1,\dots,n\}$.

In general, it is a difficult task to compute the Baum–Bott residues for foliations of higher dimension. 
In \cite{Vishik1973}, Vishik proved Baum–Bott's Theorem and provided a method to compute the residues, assuming that the foliation has a locally free tangent sheaf. 
In \cite{BracciSuwa2015}, F. Bracci and T. Suwa studied the behavior of Baum–Bott residues under smooth deformations.

Let us recall how to compute such residues in a particular case.
Let $\mathrm{Sing}_{n-k+1}(\sF)\subset \mathrm{Sing}(\sF)$ be  the subset of components of pure dimension $n-k+1$.
Consider a symmetric homogeneous polynomial  $\varphi$   of degree $n-k+1$. 
Let  $Z \subset \mathrm{Sing}_{n-k+1}(\sF)$ be an irreducible component.  Take a generic point $p \in 
Z$ such that $p$ is a point where $Z$ is smooth and disjoint from the other   irreducible 
components. Now, consider  $B_{p}$ a ball centered at $p $,  of dimension $n-k+1$ sufficiently small and transversal to $Z$ in $p$.
In \cite{Baum2}, under a generic condition( Kupka condition \cite{Rosas}), Baum and Bott proved that   
$$ \mathrm{Res}(\sF, \varphi ; Z) = \mathrm{Res}_{\varphi}(\sF|_{B_p}; p)[Z],  $$
where $ \mathrm{Res}_{\varphi}(\sF|_{B_p}; p)$ represents  the Grothendieck residue at $p$ of the one dimensional  foliation $\sF|_{B_p}$ on $B_p$ and $[Z]$ denotes  the fundamental cycle  associated to $Z$.

In \cite{CorreaLourenco2019}, the authors show that Baum–Bott's generic hypothesis is not necessary, and we demonstrate that the above formula always holds as long as the singular set of the foliation has dimension less than or equal to $n-k+1$. 
More recently, in \cite{Kaufmann2023}  Kaufmann, Lärkäng, and Wulcan showed that the Baum–Bott residues can be expressed in terms of residue currents.


\section{Indices for 1-dimensional holomorphic  foliations on surfaces}\label{S. Brunella}
In his remarkable paper \cite{Bru}, M. Brunella studied various invariants of holomorphic 1-dimensional foliations on complex surfaces that somehow spring from the Baum-Bott theory of residues. He then 
related all these invariants to establish interesting geometric properties of foliations. 
 These results have been  extended to higher dimensions and to other settings  by various authors.  This is the subject that we study in this section.

\subsection{Baum-Bott-Brunella indices}
In this subsection we look at the Milnor number of a foliation, already studied earlier (see \ref{Milnor number of a foliation}), as well as at 
the Camacho-Sad index \cite{CS}, the variation \cite{KS}, defined by  Khanedani-Suwa, and an invariant that Brunella called Baum-Bott invariant. 

Let $\F$ be a holomorphic foliation with isolated singularities on a complex
surface $X$ and let  $p$ be singular point  of   $\F$.  From the previous section we know that there are two  Baum-Bott  residues associated to $\F$ at $p$; these correspond to the Chern numbers  $c_2$ and $c_1^2$.
The one corresponding to  $c_2$, or to the symmetric function given by the determinant,  has as residue  at $p$  the local Poincaré-Hopf index of a vector field generating $\F$ locally. The local invariant we get is the local multiplicity, or Milnor number $\mu(\F,p)$ of the foliation defined in \ref{Milnor number of a foliation}, which is a topological invariant. Following  \cite{Bru} we denote this  invariant by $PH(\F, p) = \mu(\F,p)$.

The second  Baum-Bott residue corresponds to  the symmetric function given by the trace, or actually to the Chern number $c_1^2(N_\F)$, 
where $N_\F \in H^1(X,{\O}^*)$
is the normal bundle of $\F$;  this bundle is defined even in the presence of singularities since $\F$ is determined by a morphism of a line bundle into $TM$.

Following  \cite{Bru} we denote this by $BB(\F, p)$ and call it {\bf the Baum-Bott invariant of $\F$ at $p$.}  This can be studied  {\it à la}  Godbillon-Vey. Let   $ (z,w)$ be
 local coordinates centered at $p$ and let $\omega = F(z,w) dw    - 
G(z,w) dz$ be a
holomorphic 1-form that generates $\F$ locally. 
Let $\beta$ be a complex valued smooth 1-form of type $(1,0)$ on a punctured neighborhood $U^* = U \setminus \{p\}$
such that 
$$d \omega = \beta \wedge \omega \,.$$
For instance one may take:
$$\beta = \frac{\frac{\partial F}{\partial z} + \frac{\partial G}{\partial w}} {|F|^2 + |G|^2} \; \Big(\overline F \,dz  +  \overline G \,dw \Big)  \;.
$$
Then  one has (see  \cite{Bru}): 
\[\label{BB-inex}
 \mathrm{BB}(\F, p) \; = \, \frac{1}{(2 \pi \,i)^2} \int_{\s^3_\e} \beta \wedge d\beta \;,
\]
where ${\s^3_\e}$ is the boundary   sphere of a small
ball around $p$ with its natural orientation.

If the surface $X$ is compact then one has a global result in the vein of the Poincaré-Hopf index theorem:
\begin{equation}\index{Baum-Bott index}
c_1^2(N_\F) = \sum_{p \in {\rm sing} \F}  \mathrm{BB}(\F, p) \;.
\end{equation}

Now let $S$ be a separatrix of $\F$ at $p$.\index{Separatrix} That is, a local holomorphic curve containing $p$ which is invariant by $\F$. 
 It is not assumed that this curve is neither smooth nor irreducible. The following invariant is defined in \cite{KS}:
 
 \begin{definition}\index{Variational index}
 {\bf The variation} of $\F$ at $p$ relative to $S$ is:
 $${\rm Var}(\F,S,p) \,= \frac{1}{2 \pi i} \int_{S \cap \s_\e} \beta \,,
 $$
 with $\beta$ as above.
 \end{definition}

This invariant has  the following important global property. 
\begin{proposition}[Khanedani-Suwa]
Suppose $S$ is now a compact holomorphic curve in $X$, invariant by the foliation, then:
$$ \sum_{p \in {\rm Sing}(\F) \cap S} {\rm Var}(\F,S,p)  \, = \; c_1( N\F) \cdot S \;.
$$
\end{proposition}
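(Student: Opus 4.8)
The plan is to recognize the smooth $(1,0)$-forms $\beta$ as local connection forms for the normal bundle $N\F$ restricted to $S$, defined away from the singular points, and then to compute the degree $c_1(N\F)\cdot S = \deg(N\F|_S)$ by a Chern--Weil/Stokes argument in which the singularities of $\F$ on $S$ contribute exactly the variations.

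First I would cover $X$ by opens $\{U_\alpha\}$ trivializing $N\F$, with local generators $\omega_\alpha$ of $\F$ satisfying $\omega_\alpha = g_{\alpha\beta}\,\omega_\beta$ on overlaps, where $\{g_{\alpha\beta}\}$ is a cocycle for $N\F$, and choose on each $U_\alpha^* = U_\alpha\setminus\operatorname{Sing}(\F)$ a smooth $\beta_\alpha$ with $d\omega_\alpha = \beta_\alpha\wedge\omega_\alpha$ (for instance the metric form displayed in the text). Differentiating the gluing relation gives $(\beta_\alpha - \beta_\beta - d\log g_{\alpha\beta})\wedge\omega_\beta = 0$, so $\beta_\alpha - \beta_\beta \equiv d\log g_{\alpha\beta} \pmod{\omega}$. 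The crucial point is that, upon restriction to $S$, the ambiguity $\pmod{\omega}$ disappears because $i^*\omega = 0$; hence $i^*\beta_\alpha - i^*\beta_\beta = d\log(g_{\alpha\beta}|_S)$, i.e. $\{i^*\beta_\alpha\}$ is a genuine collection of connection forms for $N\F|_S$ over $S^* := S\setminus\operatorname{Sing}(\F)$.

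Next I would check that $i^*\beta_\alpha$ is closed on $S^*$: from $d(d\omega)=0$ together with $\beta\wedge\beta=0$ one gets $d\beta\wedge\omega = 0$ on $X^*$, whence $d\beta$ lies in the ideal generated by $\omega$, so $i^*(d\beta) = 0$ and thus $d(i^*\beta_\alpha) = 0$. In particular each $\frac{1}{2\pi i}\int_{S\cap\s_\e}i^*\beta$ is independent of the small radius $\e$, so $\operatorname{Var}(\F,S,p)$ is a well-defined period. Now choose a global smooth connection $\theta = \{\theta_\alpha\}$ on $N\F|_S$, with curvature $\Theta := d\theta_\alpha$ a global $2$-form normalized so that $\frac{1}{2\pi i}\int_S\Theta = \deg(N\F|_S) = c_1(N\F)\cdot S$. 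Since $\beta$ and $\theta$ obey the same transition law, their difference $\eta := i^*\beta_\alpha - \theta_\alpha$ patches to a global $1$-form on $S^*$, with $d\eta = -\Theta$ by the closedness of $i^*\beta$. Removing small disks $D_p$ around each $p\in\operatorname{Sing}(\F)\cap S$ and applying Stokes to $\eta$ on $S\setminus\bigcup_p D_p$, then letting the radii tend to $0$, the $\theta$-boundary terms vanish (as $\theta$ is smooth) while the $\beta$-boundary terms produce $2\pi i\sum_p\operatorname{Var}(\F,S,p)$, and the interior integral of $-d\eta = \Theta$ tends to $\int_S\Theta$. Comparing the two sides yields $\sum_p\operatorname{Var}(\F,S,p) = \frac{1}{2\pi i}\int_S\Theta = c_1(N\F)\cdot S$.

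The main obstacle, beyond the orientation and sign bookkeeping (verifying that $\{i^*\beta_\alpha\}$ is a connection on $N\F$ rather than on its dual, so that the final sign matches $c_1(N\F)\cdot S$ with the paper's conventions), is the absence of any smoothness hypothesis on $S$: a separatrix need be neither smooth nor irreducible. One must therefore interpret $S\cap\s_\e$ as the full link of $p$, a union of circles, one per local branch, sum the residues over branches, and justify the degree formula on a possibly singular curve. I expect the cleanest route is to pass to the normalization $\nu\colon\widetilde S\to S$, pull back $N\F$, and run the smooth argument on $\widetilde S$ while checking $\deg\nu^*N\F = c_1(N\F)\cdot S$. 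Ensuring that $\beta$ restricts compatibly to each branch and that the per-branch periods assemble to the stated sum is where the genuine care is required.
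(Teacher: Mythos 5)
The paper states this proposition without proof, merely citing Khanedani--Suwa \cite{KS}, and your argument reconstructs essentially that standard proof: the restrictions $i^*\beta_\alpha$ are closed on $S\setminus\mathrm{Sing}(\F)$ and transform as connection forms for $N\F|_S$, so Stokes' theorem localizes $c_1(N\F)\cdot S$ at the singular points as the sum of the variations. The argument is correct, and the two points you flag as needing care (the sign/normalization conventions relating $d\theta_\alpha$ to $c_1(N\F)$, and passing to the normalization when $S$ is singular --- where it also helps to observe that an invariant curve can only be singular at points of $\mathrm{Sing}(\F)$, so no extra boundary terms arise) are precisely the right ones.
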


Now we come to the Camacho-Sad index. 
This   plays a key role for proving in \cite {CS} the separatrix theorem: every germ of a holomorphic vector  field in $\C^2$ with an isolated singularity at the origen has at least one separatrix $S$. Yet, this index is not actually defined in $\C^2$ but up in a “desingularization” of the vector field, where one assumes that there is a smooth divisor $C$, invariant under the induced  foliation, with isolated singularities. One attaches a CS-index to each such singularity, which actually is a localization of the Chern class of the normal bundle of $C$. 

Let $S$ be again a separatrix in $X$ for the foliation $\F$ and $p \in S$ a singularity of the foliation. Let $\omega$ be a local 1-form on $X$ generating $\F$ locally and $f$  a local function that defines $S$ near $p$, and we assume it is reduced. Then we know from \cite{Suwa-surfaces} that there
are functions $g$, $\xi$ and a 1-form $\eta$ on a neighborhood of $p$ such that
$$g \omega = \xi \, df + f \eta \,$$
with $k \ne 0$ on $S^* = S \setminus \{p\}$. Notice that on $S$ we have $g \omega = \xi \,df$ and $g \ne 0$ on $S^*$.

\begin{definition} \index{Camacho-Sad}
The Camacho-Sad index of $\F$ at $p$ relative to $S$  is:
$$ \mathrm{CS}(\F, S, p) = \, - \frac{1}{2\pi i} \int_{S \cap \s_\e^3}  \frac{1}{\xi} \eta \,. 
$$
\end{definition}

One has: 

\begin{proposition}
$$
\sum_{p \in {\rm Sing}(\F) \cap S}  \mathrm{CS}(\F, S, p)  \, = \; S \cdot S  \, = \; c_1(N_S)[S] \;.
$$
\end{proposition}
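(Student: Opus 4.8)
The plan is to realize the left-hand side as a sum of residues of a holomorphic connection on the normal bundle $N_S$ and to identify this sum with $\deg N_S = c_1(N_S)[S]$. The right-hand equality $S \cdot S = c_1(N_S)[S]$ is the standard identification of the self-intersection of a compact curve with the degree of its normal bundle, so it suffices to establish $\sum_p \mathrm{CS}(\F,S,p) = c_1(N_S)[S]$. First I would reduce to the case where $S$ is smooth: when $S$ is singular one passes to the normalization, on which $S \cap \s_\e^3$ becomes the link over which the defining integral is taken, so that $S$ becomes a compact Riemann surface and $N_S$ a genuine holomorphic line bundle on it, with $c_1(N_S)[S]$ and $S\cdot S$ computed against the fundamental cycle $[S]$.

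The key construction is a holomorphic connection $\nabla$ on $N_S$ over $S^\ast = S \setminus \mathrm{Sing}(\F)$, induced by the tangency of $\F$ to $S$ (the Camacho-Sad connection). Concretely, near a point of $S^\ast$ write $S = \{f=0\}$ with $f$ reduced and choose $g,\xi,\eta$ with $g\omega = \xi\,df + f\eta$ and $\xi \neq 0$ along $S^\ast$; restricting to $S$ gives $g\omega|_S = \xi\,df|_S$, which exhibits both the invariance of $S$ and the fact that $\xi$ trivializes $N_S$ there. Differentiating this relation along the leaves of $\F$ one finds that, in the holomorphic frame of $N_S$ dual to $df$, the connection form of $\nabla$ equals $\tfrac{1}{\xi}\eta\big|_S$ up to sign. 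I would then verify that this is genuinely a connection form: the quantity $\tfrac{1}{\xi}\eta$ is unchanged under rescaling $g$, replacing the local data $(g,\xi,\eta,f)$ alters it by an exact holomorphic form, and on overlaps the local forms differ by $d\log$ of the transition cocycle of $N_S$, so the pieces glue to a well-defined holomorphic connection on $N_S|_{S^\ast}$. By the very definition of $\mathrm{CS}(\F,S,p)$, the residue of this meromorphic connection form at each singular point $p$ is $-\mathrm{CS}(\F,S,p)$.

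It then remains to prove the residue theorem: the total residue of a holomorphic connection on a line bundle $L$ over a compact Riemann surface $S$, holomorphic off a finite set $\Sigma$, equals $-\deg L$. For this I would compare $\nabla$ with the Chern connection of an auxiliary smooth Hermitian metric on $N_S$. Since a holomorphic connection on a curve is flat — its connection form $\theta$ is of type $(1,0)$ and holomorphic, so $d\theta$ vanishes — the curvature, hence $c_1(N_S)$, is concentrated on $\Sigma$. Writing $\deg N_S = \tfrac{i}{2\pi}\int_S \bar\partial\partial \log\|e\|^2$ in a holomorphic frame $e$, subtracting the $\bar\partial$-closed form $\theta$, and applying Stokes on $S$ minus small disks around the points of $\Sigma$, the smooth metric terms contribute nothing in the limit while the remaining boundary integrals collapse to $\sum_p 2\pi i\,\mathrm{Res}_p\theta$. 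Matching the constants then yields $\deg N_S = -\sum_p \mathrm{Res}_p\theta = \sum_p \mathrm{CS}(\F,S,p)$.

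The main obstacle is the middle step: extracting the explicit connection form $\tfrac{1}{\xi}\eta|_S$ from the local data $g\omega = \xi\,df + f\eta$ and verifying its transformation law, so that the local Camacho-Sad indices are literally the residues of a single global holomorphic connection on $N_S$. Once this is in place, the localization argument is routine, the only care needed being the orientation of the boundary circles and the bookkeeping of the factors of $2\pi i$ so that the final sign comes out as stated; the singular case then runs verbatim on the normalization.
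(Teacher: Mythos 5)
The paper states this proposition without proof, so the comparison is with the standard Camacho--Sad/Suwa argument it implicitly invokes; your proposal is exactly that argument and is correct. The one genuine crux, which you correctly isolate, is showing that $\tfrac{1}{\xi}\eta\big|_{S}$ is independent of the choice of $(g,\xi,\eta,f)$ up to exact holomorphic forms and transforms on overlaps by $d\log$ of the transition cocycle of $N_S$, so that it defines a single connection on $N_S$ over $S^{*}$ whose residues are $-\mathrm{CS}(\F,S,p)$; this is precisely the lemma in Suwa's paper on indices relative to invariant curves that the text cites when introducing the decomposition $g\omega=\xi\,df+f\eta$. The remaining steps are routine as you say, with two small points worth recording: singular points of an invariant curve are automatically singular points of $\F$, so the connection genuinely lives on all of $S^{*}$ (and the passage to the normalization only has to handle the branches at points of $\mathrm{Sing}(\F)$), and your sign conventions are consistent, since with $\mathrm{Res}_p\theta=\tfrac{1}{2\pi i}\oint\tfrac{\eta}{\xi}=-\mathrm{CS}(\F,S,p)$ and total residue $=-\deg N_S$ one indeed gets $\sum_p\mathrm{CS}(\F,S,p)=c_1(N_S)[S]$.
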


Notice that all these formulas have a similar flavor, involving the geometry of the foliation to localize the Chern class of some normal bundle. This contrasts with the GSV index, 
which (by Proposition \ref{Th GSV vs virtual})  is the localization  of a Chern class of the virtual tangent bundle.

\subsection{The GSV index for 1-dimensional holomorphic  foliations}
Given a foliation $\F$ on a complex surface $X$ as above, with a singularity at $p$ and a separatrix $S$ through $p$, $S$ is locally generated by a holomorphic vector  field $v$. Hence $S$ can be regarded itself as being a hypersurface in $X$ with an isolated singularity at $p$ (in fact $p$ can be a regular point of $S$), equipped with a vector field $v$. Thus  one has the  local GSV index \index{GSV index ! for foliations} of $v$ in $S$ at $p$, which we take as being its Poincaré-Hopf index in a Milnor fiber of a local defining function $f$ for $S$ (we take this as its definition even if $S$ has several branches).

 Brunella  in \cite{Bru} used the foliation to introduce an invariant $Z(\F,S,p)$ in the setting we now envisage, and proved that it  coincides with the GSV index.  Brunella then established surprising deep relationships between the Baum-Bott, Camacho--Sad, the variation  and the $\operatorname{GSV}$ indices.

Let $X$ be a compact complex surface, and let $\mathcal{F}$ denote a one-dimensional holomorphic foliation on $X$. Let $C$ be a reduced curve on $X$ and $x$ a point in $C$.
When $C$ is invariant under $\mathcal{F}$, we say that $\mathcal{F}$ is {\it logarithmic along} $C$. 
Let
 $f = 0$ be a local equation for $C$ in a neighborhood $U_{\alpha}$ of $x$, and let $\omega_{\alpha}$ be a holomorphic $1$-form that induces the foliation $\mathcal{F}$ on $U_{\alpha}$. Since $\mathcal{F}$ is logarithmic along $C$, it follows from \cite{Sai,  Suwa-surfaces} that there exist holomorphic functions $g$ and $\xi$ defined in a neighborhood of $x$ that do not both vanish identically on $C$, such that
\begin{eqnarray} \label{2p11}
g\,\frac{\omega_{\alpha}}{f} = \xi\, \frac{df}{f} + \eta,
\end{eqnarray}
\noindent where $\eta$ is an appropriate holomorphic $1$-form.  

\begin{definition}[Brunella \cite{Bru}] \index{Brunella index}
 Let $\mathcal{F}$ be a one-dimensional holomorphic foliation on a compact complex surface $X$ that is logarithmic along a reduced curve $C \subset X$. Given $x \in C$, we define
$$
\mathrm{Z}(\mathcal{F}, C, x) = \sum_i \mathrm{ord}_x \left(\frac{\xi}{g}|_{C_i}\right),
$$
where $C_i \subset C$ are the irreducible components of $C$ at $x$, and $\mathrm{ord}_x \left(\frac{\xi}{g}|_{C_i}\right)$ denotes the order of vanishing of $\frac{\xi}{g}|_{C_i}$ at $x$.
\end{definition}


First, we will see that Brunella's $\operatorname{Z}$ index coincides with the $\operatorname{GSV}$ index.

\begin{proposition}\cite[Page 532 ]{Bru}
Let $p\in \operatorname{Sing}(\F)\cap C$. Then
\[
 \operatorname{Z}(\mathcal{F}, C, p)=\operatorname{GSV}(\mathcal{F}, C, p). 
\]
\end{proposition}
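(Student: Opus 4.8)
The plan is to compute both indices branch by branch and match them, using the local correspondence between the generating $1$-form $\omega=\omega_\alpha$ and the vector field $v$ that spans $\mathcal{F}$ near $p$ (the field dual to $\omega$, characterized by $i_v\omega=0$), together with the logarithmic decomposition \eqref{2p11}. Write $f=\prod_i f_i$ with $C_i=\{f_i=0\}$ the branches of $C$ through $p$, and let $\phi_i\colon(\C,0)\to(C_i,p)$ be a primitive (Puiseux) parametrization of each branch. On each branch I will extract two numbers: the summand $\operatorname{ord}_p\bigl(\tfrac{\xi}{g}\bigm|_{C_i}\bigr)$ defining $Z$, and the vanishing order $\kappa_i$ at $0$ of the vector field $\phi_i^{\ast}v$ induced on the normalization.

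First I would settle the base case in which $C$ is smooth and irreducible at $p$. Choosing coordinates with $f=w$ and writing $v=a\,\partial_z+w\,b\,\partial_w$ (tangency of $v$ to $C$ forces the $\partial_w$-component to be divisible by $w$), the dual form is $\omega=a\,dw-w\,b\,dz$. Substituting into $g\,\omega=\xi\,df+f\,\eta$ and restricting to $\{w=0\}$ gives, up to a nonvanishing unit, $\tfrac{\xi}{g}\bigm|_{C}=a(z,0)$, so that $\operatorname{ord}_p\bigl(\tfrac{\xi}{g}\bigm|_{C}\bigr)=\operatorname{ord}_z a(z,0)$ is exactly the Poincaré--Hopf index of $v|_C$, which in the smooth case \emph{is} the $\mathrm{GSV}$ index. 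This isolates the mechanism: the decomposition \eqref{2p11} converts the logarithmic datum $\xi/g$ into the tangential component of $v$ along $C$.

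For a general reduced curve I would pull $g\,\omega=\xi\,df+f\,\eta$ back along $\phi_i$ and read off leading orders. The key local computation is
\[
\operatorname{ord}_p\Big(\frac{\xi}{g}\Big|_{C_i}\Big)=\kappa_i-\sum_{j\neq i}(C_i\cdot C_j)_p,
\]
the correction arising because $df$ restricted along $C_i$ carries the factor $\prod_{j\neq i}f_j$, whose order on $C_i$ equals $\sum_{j\neq i}(C_i\cdot C_j)_p$. Summing over $i$ gives $Z(\mathcal{F},C,p)=\sum_i\kappa_i-2\sum_{i<j}(C_i\cdot C_j)_p$. On the other hand, the $\mathrm{GSV}$ index of $v$ on $C$, computed as the total Poincaré--Hopf index in the Milnor fibre of $f$, i.e.\ the degree of the frame map $\phi_v=(v,\overline{\nabla}f)$ of Definition \ref{def GSV}, is given for plane curves by the same expression $\sum_i\kappa_i-2\sum_{i<j}(C_i\cdot C_j)_p$ (this is the $n=1$ computation recorded in \cite[3.2]{BSS}, where the intersection numbers of the branches enter). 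Comparing the two expressions yields $Z(\mathcal{F},C,p)=\operatorname{GSV}(\mathcal{F},C,p)$.

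I expect the main obstacle to be the reducible/singular bookkeeping in the displayed order formula: one must pull \eqref{2p11} back to the normalization of each branch, check that $\xi/g$ restricts to a well-defined meromorphic germ of finite order (using that $g,\xi$ do not both vanish identically on $C_i$), and correctly attribute the vanishing of $df$ along $C_i$ to the factorization of $f$. A secondary point requiring care is matching the normalizations of the two intersection-correction terms so that signs and multiplicities agree with the $n=1$ $\mathrm{GSV}$ computation; checking the node $C=\{xy=0\}$ with a nondegenerate linear field, where both sides equal $0$, is a useful consistency test.
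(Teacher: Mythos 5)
Your base case (a smooth irreducible branch) is correct and captures the same mechanism as the paper's argument, but the general case contains a genuine error: the displayed formula
\[
\operatorname{ord}_p\Big(\frac{\xi}{g}\Big|_{C_i}\Big)=\kappa_i-\sum_{j\neq i}(C_i\cdot C_j)_p
\]
is false whenever a branch $C_i$ is itself singular, and the matching expression you quote for the $\mathrm{GSV}$ index is not the one recorded in \cite[3.2]{BSS}. Passing to the dual vector field, the decomposition (\ref{2p11}) gives $v|_{C_i}=\frac{\xi}{g}\bigl(\prod_{j\neq i}f_j\bigr)\big|_{C_i}\,J(df_i)|_{C_i}$, where $J(df_i)=\partial_w f_i\,\partial_z-\partial_z f_i\,\partial_w$; on the normalization of a \emph{singular} branch this Hamiltonian factor itself vanishes to order $\mu(f_i)=2\delta_i>0$, a contribution your bookkeeping omits. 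The correct statements are $\operatorname{ord}_p(\xi/g|_{C_i})=\kappa_i-\sum_{j\neq i}(C_i\cdot C_j)_p-2\delta_i$ and $Z=\mathrm{GSV}=\sum_i\kappa_i-2\delta(C,p)$ with $\delta(C,p)=\sum_i\delta_i+\sum_{i<j}(C_i\cdot C_j)_p$. Concretely, take $C=\{w^2-z^3=0\}$ and $v=2w\,\partial_z+3z^2\,\partial_w$: here $\omega=df$, so $\xi=g=1$ and $Z=0$, and $v$ is tangent to and nonvanishing on the Milnor fibres, so $\mathrm{GSV}=0$; yet $\phi(t)=(t^2,t^3)$ gives $\phi^{*}v=t^2\phi'(t)$, i.e.\ $\kappa_1=2$, and your formula predicts $2$. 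Your consistency test (the node) does not detect this because both branches there are smooth. A secondary point: you cannot literally ``pull back $g\,\omega=\xi\,df+f\eta$ along $\phi_i$'' --- both sides pull back to $0$ since $f\circ\phi_i=0$ and $C_i$ is $\omega$-invariant; one must first pass to the dual field (or compare transverse coefficients), which is exactly where the $J(df_i)$ factor, and hence the missing $2\delta_i$, appears.

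For comparison, the paper's proof avoids all of this normalization bookkeeping. It writes $J(\omega)=\frac{\xi}{g}J(df)+\frac{f}{g}J(\eta)=v_1+v_2$ and observes that $v_1=\frac{\xi}{g}J(df)$ is a meromorphic field tangent to the nearby \emph{smooth} fibre $C_\epsilon=\{f=\epsilon\}$, on which $J(df)$ never vanishes; the Poincar\'e--Hopf index of $v_1|_{C_\epsilon}$ (which is the $\mathrm{GSV}$ index by definition) is therefore the number of zeros of $\xi$ minus the number of poles of $g$ on $C_\epsilon$, computed as $\frac{1}{2\pi i}\int_{\partial C_\epsilon}d\log(\xi/g)$, and this limits to $\sum_i\operatorname{ord}_p(\xi/g|_{C_i})=Z$ as $\epsilon\to 0$. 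If you wish to salvage your branch-by-branch route, you must insert the $2\delta_i$ correction on both sides and actually prove (rather than quote) the decomposition of the $\mathrm{GSV}$ index over branches; as written, the key computation is incorrect.
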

\begin{proof}
Let $f$ be a holomorphic
function on a neighborhood $U$ of $p$ and defining  $C|_U=\{f = 0\}$. So, if $\omega$ induces $\F$ on $U$ we have  

{
$$ 
\omega =  \frac{\xi}{g} df + \frac{f}{g}\eta.
$$
}

Let $J: \Omega_U^1 \to TU$ denote the isomorphism from $1$-forms to vector fields in a neighborhood $U$ of $p$, defined by
\[
\omega = P \, dx + Q \, dy \mapsto J(\omega) = Q \frac{\partial}{\partial x} - P \frac{\partial}{\partial y}.
\]
The holomorphic vector field $J(\omega)$ defines the foliation $\mathcal{F}$ and can be expressed as the sum of two meromorphic vector fields:
\[
J(\omega) = \frac{\xi}{g} J(df) + \frac{f}{g} J(\eta) = v_1 + v_2,
\]
with $v_1=\frac{\xi}{g} J(df)$ and $v_2= \frac{f}{g} J(\eta)$. Now, observe that 
$$J(\omega)|_{C}=\left(\frac{\xi}{g} J(df) + \frac{f}{g} J(\eta)\right)|_{C}=\frac{\xi}{g} J(df)|_{C}=v_1|_C$$
 and $v_1$ is tangent to $C_\epsilon = \{ f = \epsilon \}$,  for a small $\epsilon$. The restriction $v_1|_{C_\epsilon}$ has poles at the points of $C_\epsilon \cap \{ g = 0 \}$ and zeros at the points of $C_\epsilon \cap \{ \xi = 0 \}$. Since the GSV index in $C_\epsilon$ is the Poincar\'e--Hopf index of $v_1|_{C_\epsilon}$, it is the difference between the number of zeroes and the number of poles of $v_1|_{C_\epsilon}$. Therefore,
\[
\operatorname{GSV}(\mathcal{F}, C, p)=\frac{1}{2\pi i} \int_{\partial C_\epsilon} \frac{d\xi}{\xi} -  \int_{\partial C_\epsilon}  \frac{dg}{g} = \frac{1}{2\pi i} \int_{\partial C} \frac{g}{\xi} d \left(  \frac{\xi}{g}\right) = \operatorname{Z}(\mathcal{F}, C, p).
\] 
This proves the proposition.

\end{proof}


One has: 

\begin{proposition} \cite[ Proposition 5]{Bru}\label{var-CS-GSV}
If $C$ is any separatrix at $p$, then
\[
\operatorname{Var}(\mathcal{F}, C, p) = \operatorname{GSV}(\mathcal{F}, C, p) + \operatorname{CS}(\mathcal{F}, C, p),
\]
where $\operatorname{Var}(\F,V,p)$ denotes the variational Khanedani--Suwa index of $\F$ along $V$ at $p$ \cite{KS} and $\operatorname{CS}(\mathcal{F}, C, p)$ is the Camacho--Sad index \cite{CS}.
\end{proposition}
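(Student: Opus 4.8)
The plan is to funnel everything through the single logarithmic decomposition $g\,\omega = \xi\,df + f\eta$ of \eqref{2p11}, equivalently $\omega = \tfrac{\xi}{g}\,df + \tfrac{f}{g}\,\eta$, and to identify, on the link $S\cap\s_\e^3$, the pullback of the Godbillon--Vey form $\beta$ with an explicit meromorphic form assembled from $g,\xi,\eta$. First I would record that the variation does not depend on the particular choice of $\beta$: if $d\omega=\beta\wedge\omega=\beta'\wedge\omega$ on the punctured neighbourhood where $\omega\neq 0$, then $(\beta-\beta')\wedge\omega=0$, so on a complex surface $\beta-\beta'=h\,\omega$ for some function $h$; since $S$ is invariant we have $i^*\omega=0$, whence $\int_{S\cap\s_\e}(\beta-\beta')=0$. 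Thus the variation is governed entirely by $i^*\beta$.

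The central computation is to exhibit a manageable candidate for $i^*\beta$. Factoring $\omega=\tfrac{\xi}{g}\,\theta$ with $\theta=df+\tfrac{f}{\xi}\eta$, so that $\tfrac{d\xi}{\xi}-\tfrac{dg}{g}=d\log(\xi/g)$ produces the first term, a direct differentiation gives
\[
d\omega=\Bigl(\frac{d\xi}{\xi}-\frac{dg}{g}-\frac{\eta}{\xi}\Bigr)\wedge\omega\;+\;f\Bigl(\frac{1}{g}\,d\eta-\frac{1}{g\,\xi}\,d\xi\wedge\eta\Bigr),
\]
the key cancellations being $\eta\wedge\eta=0$ together with the substitution $\tfrac{1}{g}\,df=\tfrac{\omega}{\xi}-\tfrac{f}{g\xi}\eta$ coming from the decomposition itself. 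Writing $\beta_0:=\tfrac{d\xi}{\xi}-\tfrac{dg}{g}-\tfrac{\eta}{\xi}$, one sees that the leftover term $d\omega-\beta_0\wedge\omega$ is divisible by $f$, hence vanishes on $S$; here $\e$ is chosen small so that the polar locus $\{\xi=0\}\cup\{g=0\}$ of $\beta_0$ meets $S$ only at $p$, leaving $\beta_0$ regular along the link.

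I expect the genuine obstacle to be upgrading this divisibility into the exact pullback identity $i^*\beta=i^*\beta_0$ on $S\cap\s_\e$. Since $(\beta-\beta_0)\wedge\omega=fR$ with $R$ regular along the link and $\omega\neq 0$ there, one solves locally the linear equation $\psi\wedge\omega=fR$ by a $1$-form $\psi$ divisible by $f$ (the factor $f$ is inherited by the particular solution because $\omega$ is nonvanishing). Then $\beta-\beta_0-\psi$ wedges to zero with $\omega$, so it is a multiple of $\omega$; pulling back to $S$ annihilates the $\omega$-multiple as well as $\psi$ (divisible by $f$), and one concludes $i^*\beta=i^*\beta_0$. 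This local-then-global patching, resting on the uniqueness of $\beta$ modulo $\omega$, is the delicate point of the argument.

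With the identity in hand the conclusion is a clean splitting of one contour integral:
\[
\operatorname{Var}(\mathcal{F},C,p)=\frac{1}{2\pi i}\int_{S\cap\s_\e}\beta_0=\frac{1}{2\pi i}\int_{S\cap\s_\e}\Bigl(\frac{d\xi}{\xi}-\frac{dg}{g}\Bigr)-\frac{1}{2\pi i}\int_{S\cap\s_\e}\frac{\eta}{\xi}.
\]
By the preceding proposition the first integral equals $\operatorname{Z}(\mathcal{F},C,p)=\operatorname{GSV}(\mathcal{F},C,p)$, since $\tfrac{g}{\xi}\,d(\xi/g)=\tfrac{d\xi}{\xi}-\tfrac{dg}{g}$ and the argument principle on each branch $C_i$ returns $\sum_i\operatorname{ord}_p(\xi/g|_{C_i})$; the second integral is precisely $\operatorname{CS}(\mathcal{F},C,p)$ by definition of the Camacho--Sad index. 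Combining these yields $\operatorname{Var}(\mathcal{F},C,p)=\operatorname{GSV}(\mathcal{F},C,p)+\operatorname{CS}(\mathcal{F},C,p)$, as claimed.
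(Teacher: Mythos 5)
The paper states this proposition without proof, citing Brunella's Proposition~5, so there is no in-text argument to compare against; judged on its own, your reconstruction is correct and is essentially Brunella's original computation. The pivotal identity
\[
d\omega=\Bigl(\frac{d\xi}{\xi}-\frac{dg}{g}-\frac{\eta}{\xi}\Bigr)\wedge\omega+f\Bigl(\frac{1}{g}\,d\eta-\frac{1}{g\xi}\,d\xi\wedge\eta\Bigr)
\]
checks out against a direct expansion of $d\bigl(\frac{\xi}{g}df+\frac{f}{g}\eta\bigr)$, and the three ingredients you then combine — independence of $\operatorname{Var}$ from the choice of $\beta$ modulo multiples of $\omega$, the identification of $\frac{1}{2\pi i}\int_{S\cap\s_\e}\bigl(\frac{d\xi}{\xi}-\frac{dg}{g}\bigr)$ with $\operatorname{Z}=\operatorname{GSV}$ from the preceding proposition, and the definition $\operatorname{CS}=-\frac{1}{2\pi i}\int_{S\cap\s_\e}\eta/\xi$ — fit together with the correct signs (a quick sanity check with $\omega=w\,dz-\lambda z\,dw$ and $f=w$ gives $\operatorname{Var}=1+1/\lambda=\operatorname{GSV}+\operatorname{CS}$).

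Two remarks. First, the step you flag as delicate can be disposed of pointwise, with no auxiliary form $\psi$ and no patching: after shrinking $\e$ so that $\{\xi g=0\}$ misses a neighborhood of the link $S\cap\s_\e$ (possible because $\xi$ and $g$ are not identically zero on any branch of $S$, which is part of the Saito--Suwa decomposition underlying the definitions of $\operatorname{Z}$ and $\operatorname{CS}$), at each $x\in S\cap\s_\e$ one has $f(x)=0$, so $(\beta-\beta_0)\wedge\omega$ vanishes at $x$; since $\omega(x)\neq 0$ on a surface, the kernel of wedging with $\omega(x)$ is exactly $\C\,\omega(x)$, hence $(\beta-\beta_0)(x)$ is a multiple of $\omega(x)$ and its pullback to $S$ vanishes because $T_xS\subset\ker\omega(x)$. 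Second, you should say explicitly that $i^*\beta$ is closed on $S^*$ (from $d\beta\wedge\omega=0$, so $d\beta\in\omega\wedge\Omega^1$), which is what makes the contour integral independent of $\e$ and lets you evaluate the $d\log(\xi/g)$ term by the argument principle branch by branch. With those small points made explicit the argument is complete.
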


\begin{remark}
Recently, in \cite{CorreaFernandezSoares2021}, the authors generalized the variational index for currents invariant under foliations, with applications to the study of transcendental leaves of foliations and their accumulation on the singular set. 
\end{remark}

\begin{theorem}[Brunella \cite{Bru}] 
Let $\mathcal{F}$ be a one-dimensional holomorphic foliation on a compact complex surface $X$, logarithmic along a reduced curve $C \subset X$. Then
\begin{eqnarray}\nonumber
\sum_{x \in \operatorname{Sing}(\mathcal{F}) \cap C}  \operatorname{GSV}(\mathcal{F}, C, x) =  N\mathcal{F} \cdot C - C \cdot C\,,
\end{eqnarray}
\end{theorem}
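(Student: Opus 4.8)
The plan is to obtain this global formula as the ``integrated'' version of the purely local identity recorded in Proposition~\ref{var-CS-GSV}, combined with the two global residue theorems already at our disposal: the Khanedani--Suwa formula for the variation and the Camacho--Sad formula. Since all three local invariants vanish at points of $C$ that are regular for $\mathcal{F}$, one may safely restrict every summation to the finite set $\operatorname{Sing}(\mathcal{F}) \cap C$, which is precisely the index set appearing in all the relevant statements.

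Concretely, I would first apply Proposition~\ref{var-CS-GSV} at each $p \in \operatorname{Sing}(\mathcal{F}) \cap C$, giving the pointwise relation $\operatorname{Var}(\mathcal{F}, C, p) = \operatorname{GSV}(\mathcal{F}, C, p) + \operatorname{CS}(\mathcal{F}, C, p)$, and then sum over all such $p$:
$$
\sum_{p} \operatorname{Var}(\mathcal{F}, C, p) = \sum_{p} \operatorname{GSV}(\mathcal{F}, C, p) + \sum_{p} \operatorname{CS}(\mathcal{F}, C, p).
$$
Next I would evaluate the two outer sums by the global theorems. The Khanedani--Suwa proposition gives $\sum_{p} \operatorname{Var}(\mathcal{F}, C, p) = c_1(N\mathcal{F}) \cdot C = N\mathcal{F} \cdot C$, using that $C$ is a compact curve invariant under $\mathcal{F}$ (which holds because $\mathcal{F}$ is logarithmic along $C$). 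The Camacho--Sad proposition gives $\sum_{p} \operatorname{CS}(\mathcal{F}, C, p) = C \cdot C$. Substituting these and solving for the middle sum yields $\sum_{p} \operatorname{GSV}(\mathcal{F}, C, p) = N\mathcal{F} \cdot C - C \cdot C$, as claimed.

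The upshot is that there is no genuinely new analytic content at this stage: the real work lives in the local computation of Proposition~\ref{var-CS-GSV} (relating the three residues through the decomposition $g\,\omega/f = \xi\, df/f + \eta$) and in the two Chern-class localization formulas for $\operatorname{Var}$ and $\operatorname{CS}$. The only points I would verify carefully are that the index sets of the three global identities coincide --- that is, that $\operatorname{Var}$, $\operatorname{GSV}$ and $\operatorname{CS}$ are simultaneously zero away from $\operatorname{Sing}(\mathcal{F}) \cap C$ --- and that the compactness of $C$, as a closed curve in the compact surface $X$, legitimizes invoking the global residue theorems. These are the only potential obstacles, and both are routine.
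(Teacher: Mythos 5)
Your proposal is correct and follows exactly the route the paper sets up: the local identity $\operatorname{Var}=\operatorname{GSV}+\operatorname{CS}$ of Proposition~\ref{var-CS-GSV}, summed over $\operatorname{Sing}(\mathcal{F})\cap C$ and combined with the Khanedani--Suwa formula $\sum\operatorname{Var}=c_1(N\mathcal{F})\cdot C$ and the Camacho--Sad formula $\sum\operatorname{CS}=C\cdot C$, immediately yields the claimed equality. This is precisely the derivation the paper intends (and the one in Brunella's original article), so there is nothing to add.
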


The GSV index is an integer and we already know that it can be negative.  In fact, consider  Example  \ref{homogeneous} and the foliation in $\C^2$  given by the  homogeneous vector field  $v = w^k \frac{\partial}{\partial z} - z^k \frac{\partial}{\partial w} $, whose leaves are the fibers of the map $f = z^{k+1} + w^{k+1}$. Then $S= f^{-1}(0)$  is a separatrix (with $k+1$ branches) and its GSV index is $1 - k^2$, which is negative whenever $k >1$. Notice that this foliation  has infinitely many separatrices.

Marco Brunella remarked that
 the non-negativity of the GSV index 
 is the obstruction to the positive solution to “Poincaré problem”, as we explain below in Section \ref{Poincare Problem}. The following results from \cite{Bru} point in that direction and they are interesting on their own.

Recall that a 
separatrix $S$ of $\F$ at $p$ is nondicritical
if there is a sequence of blow-ups based at $p$, $\pi :  \widetilde X \to X$,    such that $\pi$ is  a resolution of $S$, $\widehat S = \pi^{-1}(S)$ has only normal
crossing singularities, and 
the divisor $\pi^{-1}(p)$ is  $\widehat F$-invariant, where $\widehat F$ is the induced foliation on $\widetilde X$ (see \cite{CLS}).
 Then (see   \cite[Propositions 6 and 7]{Bru}): 

\begin{theorem}
If $S$ is a nondicritical separatrix at $p$ then:
$$ \mathrm{GSV}(\F,S,p) \ge 0 \,.$$
If $S$ further is a generalized curve ({\it i.e.} there are no saddle-nodes in its resolution) and $S$  is maximal,  union of all separatrices at $p$, then:
$$ \mathrm{GSV}(\F,S,p) = 0 \,.$$
\end{theorem}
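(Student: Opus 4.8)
The plan is to reduce everything to the resolved situation and to evaluate the index there by means of the identity $\operatorname{GSV}=\operatorname{Z}$ just proved, supplemented by the relation $\operatorname{Var}=\operatorname{GSV}+\operatorname{CS}$ of Proposition \ref{var-CS-GSV}. By Seidenberg's reduction of singularities there is a finite composition of blow-ups $\pi\colon\widetilde X\to X$ over $p$ after which every singularity of the transformed foliation $\widehat{\mathcal F}$ lying on the exceptional divisor is reduced. The nondicritical hypothesis on $S$ means precisely that the whole exceptional divisor $D=\pi^{-1}(p)$ is $\widehat{\mathcal F}$-invariant; moreover the strict transform $\widetilde S$ is then smooth and meets $D$ transversally, so that $\widetilde S\cup D$ is a normal-crossings invariant curve whose singular points are reduced singularities of $\widehat{\mathcal F}$. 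This is the setting in which all the indices become computable.

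First I would carry out the base-case computations at a reduced singularity $q$. Writing $\widehat{\mathcal F}$ locally as $\omega=\lambda\,w\,dz-\mu\,z\,dw+(\text{higher order})$ and using the formula $\operatorname{Z}(\mathcal F,C,q)=\sum_i\operatorname{ord}_q(\xi/g|_{C_i})$, a direct evaluation gives $\operatorname{ord}_q(\xi/g)\ge 0$ along every smooth invariant branch, with value $1$ for a single branch through a non-degenerate point and value $0$ when $C$ is the union of the two transverse invariant branches. The analogous computation at a saddle-node again yields a non-negative contribution. Thus at the reduced level every local index is $\ge 0$, and it vanishes exactly at a non-degenerate singularity whose full local separatrix (both transverse smooth branches) is taken.

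Next I would establish the blow-up transformation law allowing descent from $\widetilde X$ back to $X$. The most economical route is to transport through $\operatorname{GSV}=\operatorname{Var}-\operatorname{CS}$ the known behaviour of the two right-hand indices: the Camacho--Sad indices obey the proximity relations along the resolution tree, the self-intersection of the strict transform drops by $m^2$ under a blow-up of multiplicity $m$, and the variation transforms through the residue of the logarithmic $1$-form $\beta$ along $S$. The crucial additional input is the transformation of the foliated normal bundle $N\mathcal F$ under blow-up, and the whole point is that the nondicritical hypothesis guarantees that the combined effect of these transformations on the index is non-negative. Feeding them into the identity expresses $\operatorname{GSV}(\mathcal F,S,p)$ as a sum of the non-negative reduced contributions over $\widetilde S\cap D$ together with non-negative exceptional terms, which gives $\operatorname{GSV}(\mathcal F,S,p)\ge 0$. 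I expect this to be the main obstacle: controlling the exceptional terms and verifying their sign is exactly where nondicriticality enters, and it is the place where a dicritical component would inject a negative contribution, in accordance with the example where $\operatorname{GSV}=1-k^2<0$.

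Finally, for the equality statement I would invoke the structure theory of generalized curves (cf.\ \cite{CLS}): the absence of saddle-nodes in the resolution forces every reduced singularity on $D$ to be non-degenerate, and the minimal resolution of $\mathcal F$ then coincides with the embedded resolution of its separatrix curve. Hence when $S$ is the maximal separatrix the strict transform $\widetilde S$ exhausts the non-exceptional invariant branches, and at every point of $\widetilde S\cap D$ the complete local separatrix is the pair of transverse smooth branches $\widetilde S$ and $D$, whose base-case index is $0$. Since for a generalized curve the descent introduces no further positive terms, summing these zero contributions through the transformation law of the previous step yields $\operatorname{GSV}(\mathcal F,S,p)=0$, as claimed.
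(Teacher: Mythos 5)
The paper itself does not prove this statement; it only records it, citing Brunella's Propositions 6 and 7 in \cite{Bru}, so your proposal has to be measured against that original argument. Your overall architecture is the right one and is indeed Brunella's: reduce the singularities by a composition of blow-ups, use nondicriticality to make the whole exceptional divisor invariant, compute the index at the reduced models, and descend. Your base-case computations are also correct (for a non-degenerate reduced singularity the index of a single smooth separatrix is $1$ and of the union of both transverse separatrices is $0$; a saddle-node contributes $\ge 0$), and the appeal to the Camacho--Lins Neto--Sad structure theory of generalized curves for the equality case is the right ingredient.

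The genuine gap is in the descent step, which is where the whole proof lives and where you only assert the conclusion. You write that ``the whole point is that the nondicritical hypothesis guarantees that the combined effect of these transformations on the index is non-negative'' --- but that is precisely the statement to be proved, not something that follows from listing how $\operatorname{CS}$, $\operatorname{Var}$ and $N\mathcal{F}$ transform. What is needed, and what Brunella actually establishes, is an explicit formula for the change of $\mathrm{GSV}(\mathcal{F},S,p)$ under a single nondicritical blow-up: if $\nu$ is the algebraic multiplicity of $\mathcal{F}$ at $p$ and $m$ the multiplicity of $S$, the index at $p$ equals the sum of the indices of the transformed data at the points of $\widetilde S\cap E$ plus a correction term of the form $m\,(\nu+1-m)$ (one must also decide, and keep track of, whether the curve carried upstairs is $\widetilde S$ or $\widetilde S\cup E$). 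Non-negativity then rests on the inequality $m\le \nu+1$, valid for a nondicritical separatrix, and the vanishing in the generalized-curve case rests on the equality $\nu(\mathcal{F})=\nu(S)-1$ at every infinitely near point, which is the content of the Camacho--Lins Neto--Sad theorem. None of this bookkeeping appears in your sketch, and without it the argument is circular: the sign of the exceptional terms is exactly the point at which a dicritical component would break the theorem, so it cannot be taken on faith. To complete the proof you must state and prove the one-blow-up transformation law (e.g.\ by computing $\xi/g$ in coordinates $z=z$, $w=zt$ for the pulled-back form, or by combining the Khanedani--Suwa and Camacho--Sad transformation formulas through $\operatorname{GSV}=\operatorname{Var}-\operatorname{CS}$) and then run the induction along the resolution tree.
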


 In \cite{CavalierLehmann2001}, Cavalier and D. Lehmann proved that the converse is also true, i.e., if $$ \mathrm{GSV}(\F, S, p) = 0,$$ then the foliation $\F$ is a generalized curve at $p$, and $S$ is the maximal separatrix.

\section{Indices in higher dimensions and the Poincar\'e problem}\label{Poincare Problem}\index{Poincaré Problem}
As noted before, indices and residues play a significant role for the study of the famous Poincaré problem in dimensions 2 and higher. In this section we review some of these. The literature is vast and we will focus only on some of the works most related with this article.  Let us start by saying a few words about the problem itself.

In \cite {Poin} H. Poincaré asked  whether  it  is possible to decide if an
algebraic differential equation in two variables is algebraically integrable, and he  observed that in order to answer this question it is sufficient to know whether one can 
 bound  the degree of the generic leaves. He posed a question that in modern language can be stated as follows. Let $\F$ be a holomorphic foliation in $\mathbb P^2_{\C}$ and 
$C$ an irreducible   curve which is invariant by $\F$: Is it possible to bound the degree of $C$ in terms of the degree of $\F$?

This problem has been studied by many authors in more generality. One may ask, for instance, if  given a foliation $\F$ on $\mathbb P^n_{\C}$ of degree $d$  and a hypersurface $V$ in $\mathbb P^n_{\C}$  invariant by $\F$, is it possible to bound the degree of $V$ by a number   which depends only on $d$? 

The answer in general is obviously negative, even for $n=2$. For instance the foliation defined by the form $p w dz - q z dw$ has degree $1$ and the invariant curve $z^p - w^q = 0$ has degree equal to the maximum of $p, q$. There are many more easy examples. Yet, there are many positive examples, and as explained in \cite{BruMe} the philosophy   is that the failure of having a uniform bound   is due to the existence of "bad singularities" of $V$ or of $\F$. 

As mentioned before, for $n=2$ , Brunella in \cite{Bru} established a connection between the GSV-index and both the Khanedani-Suwa variational index \cite{KS} and the Camacho-Sad index \cite{CS}, and  he demonstrated that the non-negativity of the GSV-index acts as an obstruction to providing an affirmative answer to the \textit{Poincaré problem} on compact complex surfaces.

In fact, more generally, 
the non-negativity of the GSV-index provides an obstruction to solving the generalized Poincar\'e problem for Pfaff systems on projective complex spaces as explained below.

In \cite{Correa-MachadoGSV}, a GSV-type index for varieties was introduced, in the spirit of Brunella, for varieties invariant under holomorphic Pfaff systems (which may not be locally decomposable) on projective manifolds. 

Let $U$ be a germ of an $n$-dimensional complex manifold. Let $D$ be an analytic reduced hypersurface  on $U$ and 
consider its decomposition
into  irreducible components  
$$
D = D_1\cup\cdots \cup D_{ n-k},
$$
and suppose that   the analytic subvariety  $V = D_1\cap\cdots \cap D_{ n-k}$ has pure codimension $k$. We assume  that  
$$
V  = \{z\in U : f_{ 1}(z) =\cdots =f_{ n-k}(z) = 0\},
$$
 with $f_{1},\ldots,f_{  n-k}\in \mathcal{O}_U $ and for each $i \in \{1,\ldots, k \}$,
$$
D_i   = \{z\in U : f_{i}(z) = 0\}.
$$
\noindent Since $V$ is  a  reduced variety, then the   $k$-form $df_{1}\wedge \ldots \wedge df_{ n-k}$ is not identically zero on each irreducible component of $V$. 
Following Aleksandrov \cite{AlekMulti}, we denote by $\Omega^q_U(\hat{D}_i)$, for  $q\geq 1$, the $\mathcal{O}_U$-module of meromorphic differential $q$-forms  with simple poles on the $$\hat{D}_i = D_1\cup \cdots \cup  D_{i-1}\cup D_{i+1} \cup \cdots\cup D_{ n-k},$$
for each $i = 1,\ldots,n-k$. We have the following Aleksandrov's Decomposition Theorem, which is a generalization of Saito's residues \cite{Sai}.

\begin{theorem}[Aleksandrov \cite{AlekMulti}] \label{Teo_Alex}
Let $\omega  \in \Omega^q_U (D)$ be a meromorphic $q$-form with simple poles on $D$. If for each $j=1,\ldots, k$,
\begin{eqnarray}\nonumber
df_{ j}\wedge \omega  \in \displaystyle \sum_{i=1}^k\Omega^{q+1}_{U }(\hat{D}_i) 
\end{eqnarray}
\noindent then, there exist a holomorphic function $g $,   which is not identically zero on every irreducible component of $V$, a holomorphic $(q-k)$-form $\xi \in \Omega_{U }^{q-k}$ and a meromorphic $q$-form $\eta \in \sum^k_{i=1} \Omega^{q}_{U } (\hat{D}_i)$  
such that the following decomposition holds
\begin{eqnarray}\label{expr110}
g \omega  = \frac{df_{ 1}}{f_{ 1}}\wedge\cdots \wedge \frac{df_{ k}}{f_{ k}} \wedge  \xi  + \eta .
\end{eqnarray}
\end{theorem}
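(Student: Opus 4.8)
The plan is to argue by induction on the number $k$ of components of $D$, taking K. Saito's classical logarithmic decomposition \cite{Sai} as both the base case and the main engine. When $k=1$ one has $\hat D_1=\emptyset$, so $\sum_i\Omega^{q+1}_U(\hat D_i)=\Omega^{q+1}_U$ and the hypothesis $df_1\wedge\omega\in\Omega^{q+1}_U$ says exactly that the transverse polar part of $\omega$ along $D_1$ vanishes, i.e.\ that $\omega$ and $d\omega$ have at most simple poles along $D_1$. This is precisely Saito's condition for $\omega$ to be logarithmic, and his criterion then produces a holomorphic function $g$, not identically zero on any component of $D_1$, together with holomorphic forms $\xi\in\Omega^{q-1}_U$ and $\eta\in\Omega^q_U$ such that $g\omega=\frac{df_1}{f_1}\wedge\xi+\eta$. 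Conceptually, in the general case $\xi$ should be an extension of the iterated (top) residue of $\omega$ along $V=D_1\cap\cdots\cap D_k$, and the hypotheses $df_j\wedge\omega\in\sum_i\Omega^{q+1}_U(\hat D_i)$ are the multi-component analogue of Saito's logarithmic condition, designed to make this iterated residue well defined.

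For the inductive step I would assume the statement for reduced divisors with $k-1$ components cutting out a complete intersection, and peel off one logarithmic factor at a time. Using the hypotheses, one shows that $\omega$ is logarithmic along $D_k$ after discarding the other components, so Saito's criterion yields $g_k\,\omega=\frac{df_k}{f_k}\wedge\xi_k+\eta_k$ with $\eta_k$ free of poles along $D_k$ and $\xi_k$ an extension of $\mathrm{res}_{D_k}\omega$. The form $\xi_k$ is then a $(q-1)$-form carrying simple poles only along the traces $D_1\cap D_k,\dots,D_{k-1}\cap D_k$, and the key step is to check that the hypotheses descend to it, namely that taking residues along $D_k$ of the relations $df_j\wedge\omega\in\sum_i\Omega^{q+1}_U(\hat D_i)$ places $df_j\wedge\xi_k$ in the analogous sum of simple-pole modules for $D_1,\dots,D_{k-1}$. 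Granting this, the inductive hypothesis applied to $\xi_k$ gives $\tilde g\,\xi_k=\frac{df_1}{f_1}\wedge\cdots\wedge\frac{df_{k-1}}{f_{k-1}}\wedge\xi+\tilde\eta$.

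Substituting this into the first decomposition and clearing denominators then yields
\[
g\,\omega=\frac{df_1}{f_1}\wedge\cdots\wedge\frac{df_k}{f_k}\wedge\xi+\eta,
\]
with $g=\tilde g\,g_k$ and $\eta=\frac{df_k}{f_k}\wedge\tilde\eta+\tilde g\,\eta_k$. One then checks the membership $\eta\in\sum_i\Omega^q_U(\hat D_i)$ term by term: $\tilde g\,\eta_k$ already omits the pole along $D_k$, while each summand of $\frac{df_k}{f_k}\wedge\tilde\eta$ inherits from $\tilde\eta$ a missing component among $D_1,\dots,D_{k-1}$ that wedging with $\frac{df_k}{f_k}$, a form with poles only along $D_k$, does not restore.

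The main obstacle I anticipate concentrates in two intertwined points of the inductive step. First, both the initial claim that $\omega$ is logarithmic along $D_k$ and the descent of the hypotheses to the residue $\xi_k$ are delicate precisely because the components $D_i$ may be singular and meet only along the codimension-$k$ locus $V$; making these precise forces one to use that $f_1,\dots,f_k$ is a regular sequence, so that $V$ is a genuine reduced complete intersection and the iterated residues are well defined and order-independent. Second, and more seriously, Saito's theorem only guarantees the multipliers $g_k,\tilde g$ to be non-vanishing on the codimension-one components of the divisors involved, whereas the conclusion demands that the final $g=\tilde g\,g_k$ be non-vanishing on each component of the deeper stratum $V$. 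Controlling the pole orders along the intersections $D_i\cap D_j$, and choosing the multipliers so as not to kill any component of $V$ while avoiding spurious higher-order poles when denominators are cleared, is where the real work lies — and it is exactly here that Aleksandrov's refinement of Saito's residue calculus is indispensable rather than a mechanical iteration.
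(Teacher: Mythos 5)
Your proposal cannot be compared against an in-paper argument, because the paper does not prove this statement: it is imported verbatim from Aleksandrov \cite{AlekMulti} as a black box. Judged on its own terms, what you have written is a plan rather than a proof, and its central inductive step does not work as set up. The base case is fine: for $k=1$ the hypothesis $df_1\wedge\omega\in\Omega^{q+1}_U$ is exactly Saito's logarithmic condition and his decomposition theorem \cite{Sai} gives the conclusion. But for $k\geq 2$ the hypothesis $df_k\wedge\omega\in\sum_{i}\Omega^{q+1}_U(\hat{D}_i)$ does \emph{not} say that $df_k\wedge\omega$ is pole-free along $D_k$: the summands with $i\neq k$ are still allowed simple poles along $D_k$, and $\omega$ itself has poles along all of $D_1,\dots,D_{k-1}$ as well. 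So $\omega$ is not in the domain $\Omega^q_U(\log D_k)$ of Saito's theorem for the single divisor $D_k$, and ``discarding the other components'' is precisely the step that requires an argument; multiplying through by $f_1\cdots f_{k-1}$ does not repair it, since $f_1\cdots f_{k-1}\,df_k\wedge\omega$ can still carry a simple pole along $D_k$ coming from those summands. A second structural problem is that the object you feed to the inductive hypothesis is not well defined in your framework: Saito's theorem returns a \emph{holomorphic} $(q-1)$-form $\xi_k$ on $U$, while the form ``with simple poles along the traces $D_i\cap D_k$'' that you wish to decompose further lives on $D_k$, which is in general a singular hypersurface, so the inductive hypothesis --- stated for a reduced divisor in a smooth germ $U$ --- does not apply to it. Finally, as you yourself concede, Saito's theorem only controls the multiplier on the components of the divisor, not on the codimension-$k$ components of $V$, which is what the conclusion demands.

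These three points are not bookkeeping; they are the actual content of the theorem, and your closing sentence --- that Aleksandrov's refinement ``is indispensable rather than a mechanical iteration'' --- in effect cites the result you are meant to prove. For orientation: the known proofs of this multi-logarithmic decomposition, including Aleksandrov's, do not iterate the one-divisor statement. They pass to the holomorphic form $f_1\cdots f_k\,\omega$, translate the hypotheses into divisibility conditions on $df_j\wedge(f_1\cdots f_k\,\omega)$ modulo the ideal $(f_1,\dots,f_k)$, and then invoke a de Rham--Saito type division lemma, using that $f_1,\dots,f_k$ is a regular sequence to divide by $df_1\wedge\cdots\wedge df_k$ at the cost of a multiplier $g$ not vanishing identically on any component of $V$. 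If you want to rescue the inductive strategy, you would need to formulate and prove a relative, one-variable-at-a-time version of that division lemma together with a precise residue-descent statement on the (possibly singular) hypersurface $D_k$ --- which amounts to proving the theorem by other means.
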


We can apply this decomposition in our context. 
\begin{proposition}\label{prop_dec}
    Let $\omega\in \mathrm H^0(X,\Omega_X^{n-k}\otimes  \mathcal{L})$ be  a Pfaff system of  dimension  $k$  on a complex manifold $X$, 
and  $V\subset X$ a reduced local complete intersection subvariety  of  dimension $n-k$ which is  invariant by $\omega$.
Then for all local representations  $\omega_{\alpha}= \omega|_{U_\alpha}$  of $\omega$,  and all local expressions  of $V$ in $U_\alpha$
$$
V\cap U_{\alpha} = \{z\in U_{\alpha}: f_{\alpha,1}(z)=\cdots= f_{\alpha,n-k}(z) = 0 \},
$$
 there  are  holomorphic functions $g_{\alpha}, \xi_{\alpha}\in \mathcal{O}(U_{\alpha})$  such that
\begin{eqnarray}\label{expr1110}
g_{\alpha}\, \omega_{\alpha} =   \xi_{\alpha} df_{\alpha,1}\wedge\cdots \wedge df_{\alpha,n-k}  + \eta_{\alpha}.
\end{eqnarray}
\noindent Moreover, $g_{\alpha}$  is not identically zero on every irreducible component of  $V$ and $\eta_{\alpha}$ is given by
\begin{eqnarray}\nonumber
\eta_{\alpha} = f_{\alpha,1}\,\eta_{\alpha,1}+\cdots+f_{\alpha,1}\,\eta_{\alpha,n-k},
\end{eqnarray}
\noindent where each $\eta_{\alpha,i}\in\Omega^{n-k}_{U_{\alpha}}$ is a holomorphic $(n-k)$-form.  In particular,  the restriction  $$\displaystyle \frac{\xi_{\alpha} }{g_{\alpha} }|_{V}$$  is well defined. 
\end{proposition}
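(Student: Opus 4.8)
The plan is to obtain the decomposition (\ref{expr1110}) as a direct consequence of Aleksandrov's Decomposition Theorem (Theorem~\ref{Teo_Alex}), after trading the holomorphic form $\omega_{\alpha}$ for a logarithmic one. I would fix a chart $U_{\alpha}$ on which $\mathcal{L}$ is trivialized, so that $\omega_{\alpha}$ is an honest holomorphic $(n-k)$-form, and set $D_i=\{f_{\alpha,i}=0\}$, $D=D_1\cup\cdots\cup D_{n-k}$, with $V\cap U_{\alpha}=D_1\cap\cdots\cap D_{n-k}$. Because $V$ is a reduced complete intersection, after shrinking $U_{\alpha}$ we may assume $df_{\alpha,1}\wedge\cdots\wedge df_{\alpha,n-k}$ does not vanish identically on any irreducible component of $V$. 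The key object is the meromorphic $(n-k)$-form
$$
\Omega_{\alpha}:=\frac{\omega_{\alpha}}{f_{\alpha,1}\cdots f_{\alpha,n-k}}\in\Omega^{n-k}_{U_{\alpha}}(D),
$$
which has at most simple poles along each $D_i$ precisely because $\omega_{\alpha}$ is holomorphic.

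The hard part will be checking the hypothesis of Theorem~\ref{Teo_Alex}: that for every $j$,
$$
df_{\alpha,j}\wedge\Omega_{\alpha}\in\sum_{i=1}^{n-k}\Omega^{n-k+1}_{U_{\alpha}}(\hat{D}_i).
$$
Membership in this sum is equivalent to the vanishing of the iterated (top-order) residue along $V$: individual simple poles along each $D_i$ are allowed, but not the simultaneous pole along all components at once. This is exactly the step where invariance must be used. The hypothesis $i^{\ast}\omega_{\alpha}\equiv 0$, read through the local description (\ref{differential}) of the K\"ahler differentials of the complete intersection $V$, says that $\omega_{\alpha}\in\sum_{i}\big(f_{\alpha,i}\,\Omega^{n-k}_{U_{\alpha}}+df_{\alpha,i}\wedge\Omega^{n-k-1}_{U_{\alpha}}\big)$; the content of this step is to show that this, together with the reducedness of $V$, forces the iterated residue of $df_{\alpha,j}\wedge\Omega_{\alpha}$ to vanish. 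I expect this to be the genuine technical obstacle: in codimension one it is nothing but the classical Saito tangency condition $f_{\alpha,1}\mid df_{\alpha,1}\wedge\omega_{\alpha}$, but in higher codimension one must control the mixed logarithmic poles, and it is here that the reduced complete-intersection structure of $V$ enters decisively.

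Granting the hypothesis, I would apply Theorem~\ref{Teo_Alex} to $\Omega_{\alpha}$, with $q=n-k$ and the $n-k$ defining functions $f_{\alpha,i}$. This produces a holomorphic function $g_{\alpha}$, not identically zero on any component of $V$, a form $\xi_{\alpha}\in\Omega^{\,q-(n-k)}_{U_{\alpha}}=\Omega^{0}_{U_{\alpha}}=\mathcal{O}(U_{\alpha})$ (a genuine function, since the degrees match), and $\eta'_{\alpha}=\sum_i\eta'_{\alpha,i}$ with $\eta'_{\alpha,i}\in\Omega^{n-k}_{U_{\alpha}}(\hat{D}_i)$, such that
$$
g_{\alpha}\,\Omega_{\alpha}=\frac{df_{\alpha,1}}{f_{\alpha,1}}\wedge\cdots\wedge\frac{df_{\alpha,n-k}}{f_{\alpha,n-k}}\wedge\xi_{\alpha}+\eta'_{\alpha}.
$$

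It then remains to clear denominators. Multiplying through by $f_{\alpha,1}\cdots f_{\alpha,n-k}$ turns the left-hand side into $g_{\alpha}\,\omega_{\alpha}$ and the first term on the right into $\xi_{\alpha}\,df_{\alpha,1}\wedge\cdots\wedge df_{\alpha,n-k}$. For the remainder, each $\eta'_{\alpha,i}$ has simple poles only along the components of $\hat{D}_i$, so multiplication by $f_{\alpha,1}\cdots f_{\alpha,n-k}$ clears those poles and leaves the single factor $f_{\alpha,i}$, giving $f_{\alpha,1}\cdots f_{\alpha,n-k}\,\eta'_{\alpha,i}=f_{\alpha,i}\,\eta_{\alpha,i}$ with $\eta_{\alpha,i}$ holomorphic. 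Setting $\eta_{\alpha}=\sum_i f_{\alpha,i}\,\eta_{\alpha,i}$ yields the asserted decomposition (\ref{expr1110}) with $\eta_{\alpha}$ in the ideal of $V$. Finally, since $g_{\alpha}$ does not vanish identically on any irreducible component of $V$, the quotient $\xi_{\alpha}/g_{\alpha}$ restricts to a well-defined meromorphic function on $V$, which is the last claim.
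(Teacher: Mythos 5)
Your overall strategy is the intended one: the source of this proposition, \cite{Correa-MachadoGSV}, proceeds exactly by applying Theorem \ref{Teo_Alex} to the meromorphic form $\Omega_{\alpha}=\omega_{\alpha}/(f_{\alpha,1}\cdots f_{\alpha,n-k})$, which has simple poles along $D=\bigcup_i\{f_{\alpha,i}=0\}$, and then clearing denominators. Your endgame is correct: the degree count $q-(n-k)=0$ forces $\xi_{\alpha}$ to be a function, and $f_{\alpha,1}\cdots f_{\alpha,n-k}\,\eta'_{\alpha,i}=f_{\alpha,i}\,\eta_{\alpha,i}$ with $\eta_{\alpha,i}$ holomorphic, giving $\eta_{\alpha}=\sum_i f_{\alpha,i}\eta_{\alpha,i}$ as claimed.

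The gap is the step you flag and then leave open: deriving the multi-logarithmic hypothesis $df_{\alpha,j}\wedge\Omega_{\alpha}\in\sum_i\Omega^{n-k+1}_{U_{\alpha}}(\hat{D}_i)$ from invariance. This is not a routine verification; it is essentially the whole content of the proposition, since in Aleksandrov's theory membership in the multi-logarithmic module is close to equivalent to the existence of the decomposition (\ref{expr1110}). Concretely the hypothesis amounts to $df_{\alpha,j}\wedge\omega_{\alpha}\in(f_{\alpha,1},\dots,f_{\alpha,n-k})\,\Omega^{n-k+1}_{U_{\alpha}}$, and the route you sketch does not deliver it: writing $\omega_{\alpha}=\sum_i\bigl(f_{\alpha,i}\,\alpha_i+df_{\alpha,i}\wedge\beta_i\bigr)$ from $i^{*}\omega=0$ via (\ref{differential}) and wedging with $df_{\alpha,j}$ leaves the cross terms $df_{\alpha,j}\wedge df_{\alpha,i}\wedge\beta_i$ with $i\neq j$, which need not lie in the ideal $(f_{\alpha,1},\dots,f_{\alpha,n-k})\Omega^{n-k+1}_{U_{\alpha}}$. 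In codimension one these terms are killed by $df\wedge df=0$ (whence Saito's condition, as you note), but in higher codimension they are exactly the difficulty. The implication does hold when $\omega_{\alpha}=\omega_1\wedge\cdots\wedge\omega_{n-k}$ is locally decomposable and invariant factor by factor ($i^{*}\omega_l=0$ for each $l$): every monomial of $df_{\alpha,j}\wedge\omega_1\wedge\cdots\wedge\omega_{n-k}$ surviving modulo the ideal of $V$ would involve $n-k+1$ factors drawn from the $n-k$ differentials $df_{\alpha,1},\dots,df_{\alpha,n-k}$, hence vanishes. But for a general Pfaff system the bare condition $i^{*}\omega\equiv 0$ is too weak to force multi-logarithmicity (for instance $\omega=dx\wedge dz$ and $V=\{x=y=0\}\subset\C^{3}$ satisfy $i^{*}\omega=0$, yet any identity $g\,\omega=\xi\,dx\wedge dy+x\eta_1+y\eta_2$ forces $g$ to vanish on $V$). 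So you must either carry out the multi-logarithmicity argument from the precise notion of invariance used in \cite{Correa-MachadoGSV}, or restrict to the decomposable, factorwise-invariant situation where the pigeonhole argument above closes the gap.
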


 In this context, we define the GSV index in Brunella's spirit, which coincides with the GSV index when $V$ is a curve invariant under a foliation on a surface.

\begin{definition}  \index{GSV index ! Brunella’s definition} 
Under the assumptions of Proposition \ref{prop_dec}, suppose that $S \subset \mbox{Sing}(\omega) \cap V$ is a codimension one subvariety of $V$.  The GSV-index of $\omega$ relative to $V$ in $S$ is defined  by 
\begin{eqnarray}\nonumber
\mathrm{GSV}(\omega, V, S) : = \sum_j \mathrm{ord}_{S}\left(\displaystyle\frac{\xi_{\alpha}}{g_{\alpha}}|_{V_j}\right),
\end{eqnarray}
where the sum is taken over all irreducible components $V_j $ of $V$ and 
$\mathrm{ord}_{S}\left(\displaystyle\frac{\xi_{\alpha}}{g_{\alpha}}|_{V_j}\right)$ denotes  the order of vanishing of $\frac{\xi_{\alpha}}{g_{\alpha}}|_{V_j}$ along   $S$.
 \end{definition}

The following result was proved in \cite{Correa-MachadoGSV}.

\begin{theorem}\label{prop9}
Let $X$ be a projective manifold, and let $V \subset X$ be a reduced, locally complete intersection subvariety of  dimension $k$, invariant under a Pfaff system of  same dimension induced by a twisted form $\omega \in \mathrm{H}^0(X, \Omega_X^{n-k} \otimes \mathcal{L})$. Then the following statements hold:
\begin{itemize}
    \item[(a)] The integer  number $\mathrm{GSV}(\omega, V, S_i)$  depends only on the local representatives of $\omega$, $V$, and $S_i$;
    \\ 
    \item[(b)] If $\mathrm{Sing}(\omega, V) := \mathrm{Sing}(\omega) \cap V$ has codimension one in $V$, then the following formula is satisfied:
    $$
    \sum_i \mathrm{GSV}(\omega, V, S_i) [S_i] = c_1([\mathcal{L} \otimes \det (N_{V/X})^{-1}])|_V \frown [V],
    $$
    where $S_i$ denotes an irreducible component of $\mathrm{Sing}(\omega, V)$, and $N_{V/X}$ is the normal sheaf of the subvariety $V$.
\end{itemize}
\end{theorem}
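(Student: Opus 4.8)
The plan is to recognize the locally defined meromorphic functions $\xi_\alpha/g_\alpha$ furnished by Proposition \ref{prop_dec} as the local expressions of a single global meromorphic section of the line bundle $L:=(\mathcal{L}\otimes\det(N_{V/X})^{-1})|_V$, and then to identify $\sum_i\mathrm{GSV}(\omega,V,S_i)[S_i]$ with the divisor of that section, whose homology class is $c_1(L)\frown[V]$. The crucial structural input is that $V$ is a \emph{reduced local complete intersection}: this guarantees that the conormal sheaf $N_{V/X}^\vee=\mathcal{I}_V/\mathcal{I}_V^2$ is locally free of rank $n-k$, so $\det(N_{V/X})^{-1}=\det(N_{V/X}^\vee)$ is an honest line bundle on the possibly singular $V$, with local frame given by the image of $\Phi_\alpha:=df_{\alpha,1}\wedge\cdots\wedge df_{\alpha,n-k}$ restricted to $V$. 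I will write $\ell_{\alpha\beta}$ for the cocycle of $\mathcal{L}$, normalized so that $\omega_\alpha=\ell_{\alpha\beta}\omega_\beta$ on overlaps.

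For part (a) I would first show that $\xi_\alpha/g_\alpha|_V$ is intrinsic, i.e. independent of the Aleksandrov decomposition. Given two decompositions $g_\alpha\omega_\alpha=\xi_\alpha\Phi_\alpha+\eta_\alpha$ and $g_\alpha'\omega_\alpha=\xi_\alpha'\Phi_\alpha+\eta_\alpha'$ with $\eta_\alpha,\eta_\alpha'\in\mathcal{I}_V\cdot\Omega^{n-k}_{U_\alpha}$, eliminating $\omega_\alpha$ yields $(g_\alpha'\xi_\alpha-g_\alpha\xi_\alpha')\Phi_\alpha=g_\alpha\eta_\alpha'-g_\alpha'\eta_\alpha\in\mathcal{I}_V\cdot\Omega^{n-k}_{U_\alpha}$. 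On the regular locus of each component $V_j$ the form $\Phi_\alpha$ restricts to a nonvanishing frame of $\det(N_{V/X})^{-1}$, so the coefficient must vanish there, whence $g_\alpha'\xi_\alpha-g_\alpha\xi_\alpha'\equiv0$ on each $V_j$ and $\xi_\alpha/g_\alpha=\xi_\alpha'/g_\alpha'$ as meromorphic functions. The same elimination shows that changing the generators $f_{\alpha,i}$ of $\mathcal{I}_{V\cap U_\alpha}$, or the trivialization of $\mathcal{L}$, multiplies $\xi_\alpha/g_\alpha|_V$ by a holomorphic unit (the Jacobian determinant of the change of generators, respectively a transition function of $\mathcal{L}$), and units do not affect the order of vanishing along $S_i$. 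This gives the well-definedness claimed in (a).

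For part (b) I would compute the transition law on an overlap $U_\alpha\cap U_\beta$. Writing $\Phi_\beta\equiv J_{\beta\alpha}\Phi_\alpha\pmod{\mathcal{I}_V}$, where $J_{\beta\alpha}=\det(A)|_V$ for the holomorphic matrix $A$ with $f_{\beta,i}=\sum_j A_{ij}f_{\alpha,j}$, and substituting the two reduced decompositions $\omega_\alpha\equiv(\xi_\alpha/g_\alpha)\Phi_\alpha$ and $\omega_\beta\equiv(\xi_\beta/g_\beta)\Phi_\beta$ into $\omega_\alpha=\ell_{\alpha\beta}\omega_\beta$, comparison of the coefficients of $\Phi_\alpha$ modulo $\mathcal{I}_V$ gives
\[\frac{\xi_\alpha}{g_\alpha}\Big|_V=\ell_{\alpha\beta}\,J_{\beta\alpha}\,\frac{\xi_\beta}{g_\beta}\Big|_V.\]
Since $\{\ell_{\alpha\beta}\}$ is the cocycle of $\mathcal{L}$ and $\{J_{\beta\alpha}\}$ is the cocycle of $\det(N_{V/X})^{-1}$ in the frames $\Phi_\alpha|_V$, this is precisely the transition law of a section of $L$. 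Thus the $\xi_\alpha/g_\alpha|_V$ glue to a global meromorphic section $s$ of $L$, nonzero on every component $V_j$ because each $g_\alpha$ is. Its Weil divisor, computed componentwise, is $\mathrm{div}(s)=\sum_i\big(\sum_j\mathrm{ord}_{S_i}(\xi_\alpha/g_\alpha|_{V_j})\big)[S_i]=\sum_i\mathrm{GSV}(\omega,V,S_i)[S_i]$, and the identity $[\mathrm{div}(s)]=c_1(L)\frown[V]$ (Poincaré–Lelong, or the divisor-to-first-Chern-class correspondence in intersection theory) yields the stated formula.

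The main obstacle is the careful handling of the singularities of $V$. Three points require attention: that $\det(N_{V/X})^{-1}$ is a line bundle and $\Phi_\alpha|_V$ a genuine frame even at the singular points of $V$, which is exactly where the local complete intersection hypothesis is essential; that each $\mathrm{ord}_{S_i}(\xi_\alpha/g_\alpha|_{V_j})$ is a well-defined Weil-divisor coefficient even when a component $S_i\subset\mathrm{Sing}(\omega)\cap V$ happens to lie in the singular locus of $V$, which is settled by passing to the normalization of $V_j$ or arguing generically along $S_i$; and the validity of $[\mathrm{div}(s)]=c_1(L)\frown[V]$ on a reduced but singular $V$, which follows from standard intersection theory once $L$ is known to be a line bundle. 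None of these is deep, but the matching of the two cocycles $\{\ell_{\alpha\beta}\}$ and $\{J_{\beta\alpha}\}$ to $\mathcal{L}$ and $\det(N_{V/X})^{-1}$ is where the precise form of the statement is pinned down.
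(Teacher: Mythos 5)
Your overall strategy---gluing the local ratios $\xi_\alpha/g_\alpha|_V$ into a global meromorphic section $s$ of $(\mathcal{L}\otimes\det(N_{V/X})^{-1})|_V$ via the matching of the cocycles $\{\ell_{\alpha\beta}\}$ and $\{\det(A)|_V\}$, and then reading off (b) from $[\operatorname{div}(s)]=c_1\frown[V]$---is the right one; the survey states the theorem without proof, but this is precisely the argument of \cite{Correa-MachadoGSV}, and it is the direct generalization of Brunella's proof of $\sum_x Z(\F,C,x)=N\F\cdot C-C\cdot C$ on surfaces. Your treatment of (a) (uniqueness of $\xi_\alpha/g_\alpha|_V$ by eliminating $\omega_\alpha$ and using that $\Phi_\alpha|_V$ is nonvanishing on the dense regular locus of each reduced component) and the cocycle computation for (b) are correct.

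There is, however, one genuine gap in (b): the equality $\operatorname{div}(s)=\sum_i\operatorname{GSV}(\omega,V,S_i)[S_i]$ presupposes that every codimension-one component of $\operatorname{div}(s)$ is a component of $\operatorname{Sing}(\omega)\cap V$, and nothing in your argument establishes this. On $V_{\mathrm{reg}}\setminus\operatorname{Sing}(\omega)$ the decomposition itself saves you: restricting $g_\alpha\omega_\alpha=\xi_\alpha\Phi_\alpha+\eta_\alpha$ to $V$ shows $\omega_\alpha|_V$ and $\Phi_\alpha|_V$ are proportional, and since both are nonvanishing there (the Jacobian criterion gives $\{\Phi_\alpha|_V=0\}=\operatorname{Sing}(V)$), the ratio $s$ is a unit, so no unwanted components arise in the regular locus. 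The real issue is a codimension-one component $W\subset\operatorname{Sing}(V)$ with $W\not\subset\operatorname{Sing}(\omega)$: there $\Phi_\alpha|_V$ vanishes while $\omega_\alpha$ generically does not, which forces $g_\alpha$ to vanish along $W$ and allows $s=\xi_\alpha/g_\alpha|_V$ to have a pole along $W$, contributing a term $-m[W]$ absent from the right-hand side of your identity. Closing this requires actually using the invariance of $V$ beyond merely invoking Proposition \ref{prop_dec} (for integrable $\omega$ one shows an invariant subvariety of the same dimension as the foliation is smooth at regular points of the foliation, so such $W$ cannot exist; for general Pfaff systems this is exactly the point that must be argued or built into the hypotheses). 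Note that, as written, your proof never uses invariance except through the existence of the decomposition---that is the telltale sign that this step is missing. The other three issues you flag (local freeness of $\det(N_{V/X})^{-1}$, the order function along $S_i\subset\operatorname{Sing}(V)$, and $[\operatorname{div}(s)]=c_1\frown[V]$ on singular $V$) are indeed routine.
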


The following result offers a method for calculating the GSV-index for Pfaff systems, which can be compared with  formulas in \cite[Proposition 5.1]{Suwa2014} and \cite{Gom}. Moreover, it partially addresses Suwa's question in \cite[Remark 5.3.(6)]{Suwa2014}.

\begin{theorem}\label{teo404} 
With the setup as in Theorem \ref{prop9}, let $x \in V$, and let 
$\{ f_{1} = \cdots = f_{n-k} = 0 \}$ 
be a local equation for $V$ in a neighborhood $U$ of $x$. Consider  
$$
\omega_{|U} = \sum_{\mid I \mid = n-k} a_I \, dZ_I, \,\,\,\,\, \text{where } a_I \in \mathcal{O}(U),
$$
as the holomorphic $(n-k)$-form that induces the Pfaff system $$\omega \in \mathrm{H}^0(U, \Omega_U^{n-k} \otimes \mathcal{L})$$ on $U$. Then the following formula holds:
\begin{eqnarray} \label{formula-gsv}
\mathrm{GSV}(\omega, V, S_i) = \mathrm{ord}_{S_i}(a_{I}|_V) - \mathrm{ord}_{S_i}(\Delta_{I}|_V),
\end{eqnarray}
\noindent where $\Delta_I$ is the $(n-k) \times (n-k)$ minor of the Jacobian matrix 
$\mathrm{Jac}(f_{1}, \ldots, f_{n-k})$, corresponding to the multi-index $I$.
\end{theorem}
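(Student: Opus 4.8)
\section*{Proof proposal}

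The plan is to read the formula off directly from the Aleksandrov decomposition of Proposition \ref{prop_dec} by comparing coefficients of $\omega$ and of $df_1 \wedge \cdots \wedge df_{n-k}$ in the standard basis $\{dZ_I\}_{|I|=n-k}$ of holomorphic $(n-k)$-forms on $U$; the whole point is that one never needs to exhibit $\xi$ and $g$ explicitly. First I would recall that on $U$ the decomposition reads
$$
g\,\omega_{|U} = \xi\, df_1 \wedge \cdots \wedge df_{n-k} + \eta,
$$
where $\eta = \sum_{i=1}^{n-k} f_i\,\eta_i$ lies in the ideal $(f_1,\ldots,f_{n-k})$ cutting out $V$, and $g$ does not vanish identically on any irreducible component of $V$. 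Expanding the wedge of the differentials (Cauchy--Binet) gives
$$
df_1 \wedge \cdots \wedge df_{n-k} = \sum_{|I|=n-k} \Delta_I\, dZ_I,
$$
with $\Delta_I$ the $(n-k)\times(n-k)$ minor of $\mathrm{Jac}(f_1,\ldots,f_{n-k})$ indexed by $I$. Writing $\omega_{|U} = \sum_I a_I\,dZ_I$ and equating the coefficient of $dZ_I$ on both sides yields, for every multi-index $I$,
$$
g\,a_I = \xi\,\Delta_I + (\eta)_I,
$$
where $(\eta)_I$, the $dZ_I$-coefficient of $\eta$, again belongs to $(f_1,\ldots,f_{n-k})$.

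The crucial move is then to restrict to $V$. Since $(\eta)_I$ vanishes on $V$, the identity becomes $g|_{V_j}\, a_I|_{V_j} = \xi|_{V_j}\, \Delta_I|_{V_j}$ on each irreducible component $V_j$. Comparing coefficients in the ambient basis \emph{before} restricting is essential: it avoids the trap that the restricted forms $dZ_I|_V$ are linearly dependent on the lower-dimensional variety $V$. Choosing $I$ so that $\Delta_I$ is not identically zero on $V_j$ --- possible precisely because $V$ is reduced, so $df_1 \wedge \cdots \wedge df_{n-k}$ is not identically zero on any component (see the discussion preceding Theorem \ref{Teo_Alex}) --- we obtain an equality of meromorphic functions
$$
\frac{\xi}{g}\bigg|_{V_j} = \frac{a_I}{\Delta_I}\bigg|_{V_j}.
$$
Taking orders of vanishing along the codimension-one component $S_i$, using additivity $\mathrm{ord}_{S_i}(\phi/\psi) = \mathrm{ord}_{S_i}(\phi) - \mathrm{ord}_{S_i}(\psi)$, and summing over the components $V_j$ as in the definition of $\mathrm{GSV}(\omega, V, S_i)$, delivers exactly the asserted formula \eqref{formula-gsv}.

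The main obstacle I expect is the bookkeeping around this restriction step together with the choice of $I$. One must check that the right-hand side $\mathrm{ord}_{S_i}(a_I|_V) - \mathrm{ord}_{S_i}(\Delta_I|_V)$ is independent of the admissible multi-index $I$, which holds because it equals $\mathrm{ord}_{S_i}(\xi/g|_{V})$ for every $I$ with $\Delta_I|_{V}\not\equiv 0$, and that the two individual orders are finite, i.e. that neither $a_I|_{V_j}$ nor $\Delta_I|_{V_j}$ vanishes identically --- the latter being guaranteed by reducedness, and the former following whenever $\xi|_{V_j}\not\equiv 0$, which is the relevant case. Once these points are settled, the rest is the routine coefficient comparison described above.
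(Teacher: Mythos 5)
Your argument is correct and is essentially the intended one: the paper states Theorem \ref{teo404} without proof (citing \cite{Correa-MachadoGSV}), and the proof there is exactly this coefficient comparison in the decomposition of Proposition \ref{prop_dec}, namely $g\,a_I=\xi\,\Delta_I+(\eta)_I$ with $(\eta)_I$ in the ideal of $V$, followed by restriction to $V$ and taking $\mathrm{ord}_{S_i}$. Your added remarks on choosing $I$ with $\Delta_I|_{V_j}\not\equiv 0$ (using reducedness) and on the independence of the admissible $I$ are the right points to check and are handled correctly.
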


More precisely, let $\omega \in \mathrm{H}^0(\mathbb{P}^n, \Omega_{\mathbb{P}^n}^k(d + n-k + 1))$ be a holomorphic Pfaff system of  dimension $k$ and degree $d$. Let $V \subset \mathbb{P}^n$ be a reduced complete intersection variety of dimension $k$ and multidegree $(d_1, \dots, d_{n-k})$, invariant under $\omega$. Suppose that $\mathrm{Sing}(\omega, V)$ has codimension one in $V$. Then,
\begin{eqnarray} \nonumber
\sum_i \mathrm{GSV}(\omega, V, S_i) \deg(S_i) = [d + n-k + 1 - (d_1 + \cdots + d_{n-k})] \cdot (d_1 \cdots d_{n-k}),
\end{eqnarray}
where $S_i$ denotes an irreducible component of $\mathrm{Sing}(\omega, V)$. Therefore, if $\mathrm{GSV}(\omega, V, S_i) \geq 0$ for all $i$, we have 
$$
d_1 + \cdots + d_{n-k} \leq d + {n-k} + 1.
$$

This bound is similar to those established by Esteves and Cruz \cite{EstCruz} and by Corrêa and Jardim \cite{CJ}. Related results were previously obtained by Cerveau and Lins Neto \cite{CLins},  Soares  \cite{SoaresPoinc1},  Cavalier and  Lehmann  \cite{CavalierLehmann2006},  Brunella and Esteves in \cite{BruMe}. In particular,  Soares in \cite{SoaresPoinc1} gave this bound for a foliation by curves:
$$
d_1 + \cdots + d_{n-1} \leq d + n,
$$
whenever $V$ is a smooth complete intersection invariant curve.  

\begin{example}
Consider the Pfaff system 
  $\omega \in \mathrm{H}^0(\mathbb{P}^n, \Omega_{\mathbb{P}^n}^{n-k}(d + n-k + 1))$
given by 
$$
\omega = \sum_{0 \le j \le +n-k} (-1)^j d_j f_j \, df_0 \wedge \dots \wedge \widehat{df_j} \wedge \dots \wedge df_{n-k},
$$
where $f_j$ is a homogeneous polynomial of degree $d_j$. We can see that
$$
d_0 + d_1 + \dots + d_{n-k} = d + n-k + 1.
$$
Suppose that $\deg(f_0) = d_0 = 1$ and that $V = \{ f_1 = \cdots = f_{n-k} = 0 \}$ is smooth. Then $V$ is invariant  by  $\omega$, and we have 
$$
d_1 + \dots + d_{n-k} = d + n-k.
$$
\end{example}

\subsection{ GSV-indices for vector fields  as residues}\label{ss. Suwa}

Suwa in \cite{Suwa2014} provided an alternative interpretation of the GSV and virtual indices for vector fields as residues on complete intersection varieties with isolated singularities. He considered the residue arising from the localization of the relevant Chern class of the ambient tangent bundle using a frame consisting of $v$ and additional vector fields. Suwa's definition is recalled as follows and does not depend on the choices made.

Let $U$ be a neighborhood of the origin $0$ in $\mathbb{C}^m$, and let $V$ be a subvariety (reduced, but not necessarily irreducible) of pure dimension $n$ in $U$. Assume that $V$ contains $0$ and that $V \setminus \{0\}$ is non-singular. We consider a closed ball $B$ around $0$ that is sufficiently small so that $R = B \cap V$ has a cone structure over $\partial R = L$, the link of $V$ at $0$. 

Let $E$ be a $C^{\infty}$ complex vector bundle of rank $l$ on $U$, and let $\textbf{s} = (s_1, \dots, s_r)$ be a $C^{\infty}$ $r$-frame of $E$ on a neighborhood $V_0$ of $L$ in $V$, where $r = l - n + 1$. Let $\nabla_0$ be an $\textbf{s}$-trivial connection for $E|_{V_0}$, and let $\nabla_1$ be a connection for $E$.

\begin{definition} \label{def2.1}\index{Residue !}
The residue of $\textbf{s}$ at $0$ with respect to $c_n$ is defined by
\[
\operatorname{Res}_{c_n}(\textbf{s}, E|_V ; 0) = \int_R c_n(\nabla_1) - \int_{\partial R} c_n(\nabla_0, \nabla_1).
\]
\end{definition}
We have an exact sequence
\begin{equation} \label{eq2.6}
0 \longrightarrow T V_0 \longrightarrow T U|_{V_0} \xrightarrow{\pi} N V_0 \longrightarrow 0,
\end{equation}
where $T V_0$ and $T U$ denote the holomorphic tangent bundles of $V_0$ and $U$, respectively, and $N V_0$ is the normal bundle of $V_0$ in $U$. Suppose that $V$ is a complete intersection defined by $f = (f_1, \dots, f_k)$ in $U$, where $k = m - n$. In a neighborhood of a regular point of $f$, we may choose $(f_1, \dots, f_k)$ as part of the local coordinates on $U$, so that we have holomorphic vector fields $\frac{\partial}{\partial f_1}, \dots, \frac{\partial}{\partial f_k}$ away from the critical set of $f$. 
These vector fields are linearly independent and normal to the non-singular part of $V$, so that 
\[
 \partial_{f}:= \left(\pi\left(\frac{\partial}{\partial f_1}\Big|_{V_0}\right), \dots, \pi\left(\frac{\partial}{\partial f_k}\Big|_{V_0}\right) \right)
\]
forms a frame for $N V_0$. Suppose that we have a $C^{\infty}$ nonsingular vector field $v$ on $V_0$. 

\begin{theorem} \cite[Theorems 3.4 and 4.4]{Suwa2014}
We have
\[
\mathrm{GSV}(v, 0) = \operatorname{Res}_{c_n}((v, \partial_{f}), T U|_V ; 0) = \mathrm{Vir}(v, 0),
\]
where $\mathrm{Vir}(v, 0)$ is the virtual index of $v$ at $0$. 
\end{theorem}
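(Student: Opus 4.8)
The plan is to establish the two equalities separately, both within the framework of Chern--Weil localization underlying Definition \ref{def2.1}. First I would treat the right-hand equality $\operatorname{Res}_{c_n}((v,\partial_{f}), TU|_V;0) = \mathrm{Vir}(v,0)$, which is essentially a matching of two definitions. Since $V = f^{-1}(0)$, the frame $\partial_{f}$ trivializes the normal bundle $NV_0$ appearing in the exact sequence \eqref{eq2.6}; equipping $NV_0$ with a $\partial_{f}$-trivial connection makes its total Chern form vanish identically on $V_0$. Choosing on $TU|_{V_0}$ a connection compatible with \eqref{eq2.6} and block-adapted to the splitting furnished by $v$ and $\partial_{f}$, the Whitney product formula at the level of curvature forms gives $c(\tau V) = c(TU|_V)\cdot c(NV)^{-1} = c(TU|_V)$ on $V_0$, where $\tau V = TU|_V - NV$ is the virtual tangent bundle. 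Consequently the transgression data on the link $L = \partial R$ that define $\operatorname{Res}_{c_n}((v,\partial_{f}), TU|_V;0)$ coincide with those computing the Chern--Weil localization of the top class $c_n(\tau V)$ by the single tangent field $v$; the latter is by construction the virtual index $\mathrm{Vir}(v,0)$.

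Next I would prove the left-hand equality $\mathrm{GSV}(v,0) = \operatorname{Res}_{c_n}((v,\partial_{f}), TU|_V;0)$ by showing that both sides compute the total Poincaré--Hopf index of $v$ on a Milnor fiber. The GSV side equals $PH(v;F)$ by Proposition \ref{GSV}(1). For the residue side, I would deform $V$ to a smoothing $F_t = f^{-1}(t)\cap B$: the Chern--Weil residue is stable under this flat deformation, since the bulk integral over $R$ and the transgression integral over $\partial R$ vary continuously while the residue takes integer values. On the smooth fiber $F_t$ the normal frame $\partial_{f}$ is a genuine trivialization of $NF_t$, so the frame $(v,\partial_{f})$ collapses the residue to the classical Euler-class localization of $c_n(TF_t)$ by the tangent field $v$, which counts with sign the zeros of an extension of $v$ over $F_t$; this is exactly $PH(v;F)$.

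A cleaner but less self-contained route for the first equality bypasses the deformation: the GSV frame $(v,\overline{\nabla} f_1,\dots,\overline{\nabla} f_k)$ and the frame $(v,\partial_{f})$ are homotopic as nonvanishing $(k+1)$-frames of $TU|_V$ over $L$, since both complete the fixed field $v$ by sections projecting to a frame of $NV_0$ and the space of such completions is connected; hence they induce the same degree into the Stiefel manifold $W_{k+1}(n+k)$, and the identification of the analytic residue with this topological degree yields $\mathrm{GSV} = \operatorname{Res}$ directly. Combined with the first paragraph, or alternatively with the already-established identity $\mathrm{GSV} = \mathrm{Vir}$ of Theorem \ref{Th GSV vs virtual}, this closes the argument.

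The main obstacle throughout is reconciling the connection-dependent analytic quantity in Definition \ref{def2.1} with the integer topological index: one must verify independence of the choice of connections $\nabla_0,\nabla_1$ and of the frame within its homotopy class, and that the residue localizes entirely at $0$. This is where the Bott-type vanishing along the $\partial_{f}$-trivialized normal directions and the careful Stokes bookkeeping for the transgression form $c_n(\nabla_0,\nabla_1)$ on the collar of $L$ are decisive; controlling these boundary terms, rather than the bulk integrals, is the delicate point.
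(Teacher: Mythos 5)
The paper itself gives no proof of this statement: it is quoted verbatim from Suwa's Theorems 3.4 and 4.4, so there is nothing internal to compare against line by line. Measured against the strategy of the cited source, your proposal is essentially a faithful reconstruction: the identity $\operatorname{Res}_{c_n}((v,\partial_f),TU|_V;0)=\mathrm{Vir}(v,0)$ via the $\partial_f$-trivialization of $NV_0$ and the Whitney formula at the level of curvature forms is exactly the mechanism of Suwa's Theorem 4.4, and the identification $\mathrm{GSV}(v,0)=\operatorname{Res}_{c_n}$ via the homotopy of the frame $(v,\partial_f)$ with the frame $(v,\overline{\nabla}f_1,\dots,\overline{\nabla}f_k)$ of Definition \ref{def GSV}, followed by the standard identification of the Chern--Weil residue of a $(k+1)$-frame with its degree into $W_{k+1}(n+k)$, is the content of his Theorem 3.4. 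Your third paragraph is the right route and I would promote it to the main argument; the connectedness of the space of completions of $v$ to a $(k+1)$-frame projecting onto a frame of $NV_0$ is correct, since that space fibers over $V_0$ with connected fiber $GL_k(\mathbb{C})\times\mathrm{Hom}(NV_0,TV_0)$. One caveat: the deformation argument in your second paragraph is circular as written --- you invoke integrality of the residue to conclude its stability under the smoothing $f^{-1}(t)$, but integrality is precisely what the degree identification supplies, so that route cannot stand alone; it does, however, correctly reproduce the $\mathrm{GSV}=\mathrm{Vir}$ argument of \cite{LSS} once integrality is granted, consistent with Theorem \ref{Th GSV vs virtual}. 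The remaining burden you correctly flag (independence of $\nabla_0,\nabla_1$ and of the frame within its homotopy class, and full localization at $0$) is the standard Bott-vanishing/transgression bookkeeping and is handled in \cite{Suwa2014, BSS}.
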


Since the indices do not depend on the choices made, we have the following consequence.
 \begin{corollary} Let  $V = \{ f_1 = \dots = f_k = 0 \}$ be a complete intersection, with isolated singularity at $0$,  invariant by a foliation by curves $\F$ singular at most at $0$. Then
\[
\mathrm{GSV}(\F, 0) = \operatorname{Res}_{c_n}(\F, T U|_V ; 0) = \mathrm{Vir}(\F, 0). 
\]
 \end{corollary}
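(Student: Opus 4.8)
The plan is to deduce the corollary from the preceding Theorem of Suwa by passing from the foliation $\F$ to one of its local holomorphic generators. Since $\F$ is a one-dimensional holomorphic foliation whose singular set is contained in $\{0\}$, it is locally free of rank one (as recalled before Definition \ref{Milnor number of a foliation}), and hence on a neighborhood of $0$ it is generated by a holomorphic vector field $v$ having $0$ as its only zero. Because $V$ is invariant by $\F$, this $v$ is tangent to the regular part $V^\ast = V \setminus \{0\}$; restricting to a neighborhood $V_0$ of the link $L = \partial R$, where $R = B \cap V$, I obtain a holomorphic --- in particular $C^\infty$ --- nonsingular vector field $v|_{V_0}$ on $V_0$. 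This is exactly the datum to which the Theorem applies.

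With such a generator fixed, the three invariants attached to $\F$ in the corollary are, by definition, the corresponding invariants of $v$: one has $\mathrm{GSV}(\F, 0) = \mathrm{GSV}(v|_V, 0)$, $\mathrm{Vir}(\F, 0) = \mathrm{Vir}(v|_V, 0)$, and $\operatorname{Res}_{c_n}(\F, TU|_V; 0) = \operatorname{Res}_{c_n}((v|_{V_0}, \partial_f), TU|_V; 0)$, where $\partial_f$ is the frame of $NV_0$ built from $f = (f_1, \dots, f_k)$ as in the paragraph preceding the Theorem. Applying the Theorem to the nonsingular vector field $v|_{V_0}$ then yields the chain of equalities
\[
\mathrm{GSV}(v|_V, 0) = \operatorname{Res}_{c_n}((v|_{V_0}, \partial_f), TU|_V; 0) = \mathrm{Vir}(v|_V, 0),
\]
which, under the identifications above, is precisely the assertion of the corollary.

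The only real point to check --- and the hinge of the argument, signalled by the remark that the indices do not depend on the choices made --- is that these three invariants of $\F$ are well defined, i.e. independent of the chosen generator; otherwise the reduction above would not be legitimate. Two holomorphic generators of a rank-one foliation differ by multiplication by a nowhere-vanishing holomorphic function $g$, so the hard part will be to verify invariance under $v \mapsto gv$. For the GSV index this is a homotopy argument: since $g$ is nowhere zero on the contractible cone $R$, its restriction to $L$ is null-homotopic in $\C^\ast$, and deforming $g|_L$ to the constant $g(0)$ through units homotopes the Stiefel map $\phi_{gv} = (gv, \overline{\nabla} f_1, \dots, \overline{\nabla} f_k)$ to $\phi_v$, leaving the degree unchanged. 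For the residue and the virtual index one argues by Chern--Weil theory: scaling $v$ by a unit changes neither the $\textbf{s}$-triviality of the connection $\nabla_0$ used to localize $c_n$ nor the difference form $c_n(\nabla_0, \nabla_1)$, so $\operatorname{Res}_{c_n}$ and $\mathrm{Vir}$ depend only on the line field spanned by $v$, hence only on $\F$. Granting this invariance, the generator chosen in the first step is immaterial, and the corollary follows at once from the Theorem.
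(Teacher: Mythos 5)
Your proposal is correct and follows essentially the same route as the paper, which simply derives the corollary from Suwa's theorem by invoking the fact that the indices do not depend on the choices made. You merely make explicit what the paper leaves implicit: the choice of a local holomorphic generator $v$ of $\F$, and the verification that $\mathrm{GSV}$, $\operatorname{Res}_{c_n}$ and $\mathrm{Vir}$ are unchanged under $v \mapsto gv$ for a unit $g$.
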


Now, if $n=2$,  the formula \ref{formula-gsv}   generalizes  Suwa’s formula for the GSV index.    In fact, if $n=2$, then $k=1$, and $V$ is a curve, and $\omega$ induces a foliation by curves.  
Let $x \in V$, and 
$\{ f = 0 \}$ 
be a local equation for $V$ in a neighborhood $U$ of $x$. Consider  
$$
\omega =   a_1dx-a_2dy
$$
and its dual vector field
$$
v =   a_2  \frac{\partial}{\partial x}    +a_1  \frac{\partial}{\partial y}
$$

\begin{proposition}\cite[Corollary 5.2]{Suwa2014}
    Let  $V = \{ f = 0 \}$ be a curve, with isolated singularity at $0$,  invariant by a foliation by curves $\F$ singular at most at $0$. Then
\begin{eqnarray*} 
\mathrm{Z}(\omega, V, x) & =& \mathrm{ord}_{x}(a_{1}|_V) - \mathrm{ord}_{x}(\partial_{x}f|_V)=\mathrm{ord}_{x}(a_{2}|_V) - \mathrm{ord}_{x}(\partial_{y}f|_V).
\end{eqnarray*}
\end{proposition}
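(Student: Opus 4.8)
The plan is to read the statement straight off the definition of Brunella's index together with the decomposition of Proposition \ref{prop_dec} specialized to $n=2$, $k=1$; this is exactly the $n=2$ shadow of Theorem \ref{teo404}. By definition $\mathrm{Z}(\omega,V,x) = \sum_j \mathrm{ord}_x\!\big((\xi/g)|_{V_j}\big)$, the sum running over the branches $V_j$ of $V$ at $x$, so the entire content is to identify the meromorphic germ $\xi/g$ on each branch in terms of the coefficients $a_1,a_2$ and the partials $\partial_x f,\partial_y f$. First I would set $x=0$ and write the decomposition of Proposition \ref{prop_dec} as $g\,\omega = \xi\,df + f\,\eta$ with $\eta = \eta_1\,dx + \eta_2\,dy$, and expand both sides in the basis $dx,dy$. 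With $\omega = a_1\,dx - a_2\,dy$ and $df = \partial_x f\,dx + \partial_y f\,dy$, matching the $dx$- and $dy$-coefficients yields the two germ identities
\[
g\,a_1 = \xi\,\partial_x f + f\,\eta_1, \qquad -\,g\,a_2 = \xi\,\partial_y f + f\,\eta_2 .
\]

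Reducing these modulo the ideal $(f)$, that is, restricting to $V$, kills the $f\,\eta_i$ terms and gives $g\,a_1|_V = \xi\,\partial_x f|_V$ and $-\,g\,a_2|_V = \xi\,\partial_y f|_V$, hence on every branch $V_j$ the equality of meromorphic germs
\[
\frac{\xi}{g}\Big|_{V_j} \;=\; \frac{a_1}{\partial_x f}\Big|_{V_j} \;=\; \frac{-\,a_2}{\partial_y f}\Big|_{V_j}.
\]
Taking $\mathrm{ord}_x$, which is additive on quotients and insensitive to sign, each branch contributes $\mathrm{ord}_x(a_1|_{V_j}) - \mathrm{ord}_x(\partial_x f|_{V_j})$, equivalently $\mathrm{ord}_x(a_2|_{V_j}) - \mathrm{ord}_x(\partial_y f|_{V_j})$. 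Summing over $j$ and using $\mathrm{ord}_x(h|_V) = \sum_j \mathrm{ord}_x(h|_{V_j})$ together with the definition of $\mathrm{Z}$ produces both claimed expressions at once.

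The only delicate point is ensuring that each ratio is a genuine, finitely valued meromorphic germ on every branch, i.e.\ that neither $g|_{V_j}$ nor $\partial_x f|_{V_j}$ (respectively $\partial_y f|_{V_j}$) vanishes identically. Non-vanishing of $g$ on each component is part of Proposition \ref{prop_dec}, and $df$ is not identically zero on any component of the reduced curve $V$ (as recorded before Theorem \ref{Teo_Alex}), so at least one of $\partial_x f,\partial_y f$ is usable on each branch; when one coordinate degenerates on a given branch one simply uses the other. What guarantees that the two coordinate computations agree is the invariance hypothesis $i^*\omega = 0$, whose $n=2$ form is $\omega\wedge df \in (f)$, i.e.\ $a_1\,\partial_y f + a_2\,\partial_x f \equiv 0 \pmod f$; this is precisely the relation forcing $a_1/\partial_x f = -\,a_2/\partial_y f$ on $V$, and it is the main place where the tangency of $V$ to $\mathcal{F}$ enters. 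Since $\mathrm{Z}=\mathrm{GSV}$ was already established, the same formula simultaneously computes the $\mathrm{GSV}$ index, recovering Suwa's Corollary 5.2.
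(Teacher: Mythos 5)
Your argument is correct and follows essentially the route the paper indicates: it is the $n=2$, $k=1$ specialization of Theorem \ref{teo404}, obtained by matching the $dx$- and $dy$-coefficients in the Saito--Aleksandrov decomposition $g\,\omega=\xi\,df+f\,\eta$ of Proposition \ref{prop_dec} and reducing modulo $(f)$ to identify $\xi/g$ on each branch. Your closing remarks correctly handle the only delicate points (branches where one partial of $f$ vanishes identically, and the role of the invariance relation $a_1\,\partial_y f+a_2\,\partial_x f\equiv 0 \pmod f$), so nothing is missing.
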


An interesting application is that the $\mathrm{GSV}$ index provides an obstruction to the continuous extension of the normal bundle of the foliation from the regular part of the invariant variety. Indeed,  let $(V, 0)\subset \mathbb{C}^m$ be an ICIS of dimension $n \geq 2$, and let $v$ be a holomorphic vector field on $V$, singular only at 0. This vector field defines a one-dimensional holomorphic foliation $\F$ on $V$ singular at $0$. On $V \setminus \{0\}$, we have the tangent bundle $T\F$ to the foliation, and we consider   the smooth normal bundle $N\F=T(V \setminus \{0\}) / T\F$.

\begin{corollary}\cite[Theorem 2]{Seade2008} 
\label{cor:normal-bundle-obstruction}
Let $(V, 0)$ and $\F$ be as above. Then the normal bundle to $\F$ in $V \setminus \{0\}$ extends to $0$ as a vector bundle (continuous or smooth) if and only if $\mathrm{GSV}(\F, 0)$ is a multiple of $(n - 1)!$
\end{corollary}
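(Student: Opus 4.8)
The plan is to reduce the extension problem to a homotopy‑theoretic statement on the link and then identify the relevant obstruction with the GSV index modulo $(n-1)!$. Write $V^{*}=V\setminus\{0\}$ and $\mK = V\cap \s_\e$ for the link. First I would use the local conical structure of the ICIS: a punctured neighbourhood of $0$ in $V$ is a collar $\mK\times(0,1]$, so $N\F$ is pulled back from $N\F|_{\mK}$, while a full neighbourhood of $0$ is the cone $c(\mK)$, which is contractible. Hence $N\F$ extends over $0$ as a continuous (or smooth) bundle if and only if $N\F|_{\mK}$ is trivial, where $N\F$ has rank $n-1$. I would also record that $N\F|_\mK$ is stably trivial: the section $v$ trivialises $T\F$ and the conjugate gradients $\overline\nabla f_1,\dots,\overline\nabla f_k$ trivialise the normal bundle $N_{V/\C^{n+k}}$, so the two exact sequences give $N\F\oplus\underline{\C}^{\,k+1}\cong \underline{\C}^{\,n+k}$ on $V^{*}$; in particular all Chern classes of $N\F|_\mK$ vanish.

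Next I would realise $N\F|_\mK$ as a pullback along the very map defining the GSV index. Over $\mK$ the $(k+1)$-frame $\phi_v=(v,\overline\nabla f_1,\dots,\overline\nabla f_k)$ spans the subbundle $T\F\oplus N_{V/\C^{n+k}}$ of $\underline{\C}^{\,n+k}=T\C^{n+k}|_\mK$, and the quotient is exactly $N\F|_\mK$. Thus, if $\mathcal Q$ is the tautological rank-$(n-1)$ quotient bundle on the Stiefel manifold $W:=W_{k+1}(n+k)\cong U(n+k)/U(n-1)$, then $N\F|_\mK\cong \phi_v^{*}\mathcal Q$. Because $W$ is $(2n-2)$-connected with $\pi_{2n-1}(W)\cong\Z$, the map $\phi_v$ is null on the $(2n-2)$-skeleton and its homotopy class is detected entirely by $\deg\phi_v=\mathrm{GSV}(\F,0)\in\pi_{2n-1}(W)$. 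Since $\mK$ is $(n-2)$-connected of dimension $2n-1$ and the Chern classes of $N\F|_\mK$ vanish, the sole obstruction to trivialising $\phi_v^{*}\mathcal Q$ lives in $H^{2n-1}(\mK;\pi_{2n-2}(U(n-1)))\cong \pi_{2n-2}(U(n-1))$ and equals the image of $\mathrm{GSV}(\F,0)$ under the classifying map $\iota_{*}\colon \pi_{2n-1}(W)\to\pi_{2n-1}(BU(n-1))=\pi_{2n-2}(U(n-1))$ of $\mathcal Q$.

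The crux is to show this homomorphism is reduction modulo $(n-1)!$. Here I would use that $\mathcal Q$ is associated to the principal bundle $U(n-1)\to U(n+k)\to W$, so by naturality of the connecting homomorphism (comparing with the universal bundle, whose total space is contractible) $\iota_{*}$ agrees, up to the isomorphism $\partial^{\mathrm{univ}}$, with the connecting map $\partial\colon\pi_{2n-1}(W)\to\pi_{2n-2}(U(n-1))$ of the fibration $U(n-1)\to U(n+k)\to W$. The long exact sequence gives $\pi_{2n-1}(W)\xrightarrow{\ \partial\ }\pi_{2n-2}(U(n-1))\to\pi_{2n-2}(U(n+k))$, and the last group vanishes, being a stable homotopy group of the unitary group in even degree; hence $\partial$ is onto. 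Since $\pi_{2n-1}(W)\cong\Z$ and, by the classical computation from $U(n-1)\to U(n)\to S^{2n-1}$, $\pi_{2n-2}(U(n-1))\cong\Z/(n-1)!$, the surjection $\partial$ is reduction modulo $(n-1)!$ up to an automorphism of the target. Therefore the obstruction vanishes precisely when $\mathrm{GSV}(\F,0)\equiv 0 \pmod{(n-1)!}$, which by the first two paragraphs is equivalent to the extendability of $N\F$ over $0$. I expect the main difficulty to be the bookkeeping in this last step: justifying that the single top obstruction really computes triviality (i.e.\ that the lower obstructions vanish because $\phi_v$ is null on the $(2n-2)$-skeleton) and pinning down $\iota_{*}$ as the connecting homomorphism via naturality, rather than the formal identification of $\partial$ with reduction mod $(n-1)!$.
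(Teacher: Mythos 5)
The paper does not prove this statement; it is quoted from \cite[Theorem 2]{Seade2008}, so I am judging your argument on its own terms. Your reductions are sound up to a point: the extension problem is indeed equivalent to triviality of $N\F|_{\mK}$, the identification $N\F|_{\mK}\cong\phi_v^{*}\mathcal Q$ is correct, the connecting homomorphism $\partial\colon\pi_{2n-1}(W)\to\pi_{2n-2}(U(n-1))\cong\Z/(n-1)!$ is surjective and agrees with $\iota_{*}$, and the implication ``$(n-1)!\mid\mathrm{GSV}\Rightarrow N\F|_{\mK}$ trivial'' follows cleanly (lift $\beta_d$ to $U(n+k)$ and pull back). The gap is in the converse. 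Writing $\phi_v\simeq\beta_d\circ c$ with $c\colon\mK\to S^{2n-1}$ the degree-one collapse, triviality of $\phi_v^{*}\mathcal Q$ says only that $c^{*}\bigl(\partial(d)\bigr)=0$ in $[\mK,BU(n-1)]$; to conclude $\partial(d)=0$ you need $c^{*}\colon\pi_{2n-1}(BU(n-1))\to[\mK,BU(n-1)]$ to be injective. By the Puppe sequence its kernel is the image of $(\Sigma\alpha)^{*}\colon[\Sigma\mK^{(2n-2)},BU(n-1)]\to\pi_{2n-1}(BU(n-1))$, where $\alpha$ attaches the top cell of $\mK$ --- this is exactly the indeterminacy of your ``single top obstruction'' under change of trivialization over the $(2n-2)$-skeleton, and it is not killed by the facts you invoke (connectivity and dimension of $\mK$, vanishing of Chern classes: these live in degrees $\le 2n-2$ and cannot see the top obstruction anyway). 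Your parenthetical diagnosis --- that one only needs the lower obstructions to vanish --- misses this: the lower obstructions do vanish, but the top one is only well defined modulo that image.

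That this is a genuine mathematical issue, not bookkeeping, is shown by the case $n=3$: for the Wu manifold $M=SU(3)/SO(3)$, a simply connected closed $5$-manifold, one has $Sq^{2}\neq 0$ on $H^{3}(M;\Z/2)$ (since $w_2\neq0$), and via the Wu formula $c_3\equiv Sq^{2}c_2 \pmod 2$ on the suspension one produces a class $g\in K^{1}(M)$ with $g^{*}x_{5}$ odd; hence the image of $[M,U(N)]\to[M,W]\cong\Z$ is all of $\Z$ and \emph{every} degree-$d$ map pulls $\mathcal Q$ back to a trivial bundle, so the analogue of the statement fails for $M$. The proof therefore must use more about $\mK$ than you do: the link of an ICIS is stably parallelizable, and one needs this (e.g.\ to force $Sq^{2}=0$ on $H^{2n-3}(\mK;\Z/2)$ when $n=3$, or in general to prove that the top Chern character $ch_{n}\colon K^{1}(\mK)\to H^{2n-1}(\mK;\Q)$ is integral, via Adams operations, the Wu formula, or an odd index theorem with $\mathrm{Td}(\mK)=1$) in order to show that the indeterminacy subgroup of $\Z/(n-1)!$ vanishes. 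Until that step is supplied, only one direction of the equivalence is established.
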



\section{ Log Baum--Bott residues and Aleksandrov's logarithmic index}\label{Log-Alex}\index{logarithmic index}

Using techniques from homological algebra, Aleksandrov \cite{Alex} introduced an algebraic index, inspired by the work of G\'omez-Mont, to measure the variation between the Poincar\'e--Hopf and homological indices.
 This is defined as follows: Let $v$ be a holomorphic local vector field that induces foliation $\mathcal{F}$. The interior multiplication $i_{v}$ then generates a complex of logarithmic differential forms:
$$
0 \longrightarrow \Omega^n_{X,p}(\log \, \mathcal{D}) \stackrel{i_v}{\longrightarrow} \Omega^{n-1}_{X,p}(\log \, \mathcal{D}) \stackrel{i_v}{\longrightarrow} \cdots \stackrel{i_v}{\longrightarrow} \Omega^{1}_{X,p}(\log \, \mathcal{D}) \stackrel{i_v}{\longrightarrow} \mathcal{O}_{X,p} \longrightarrow 0.
$$
Let $p$ be an isolated singularity of the foliation $\mathcal{F}$. The $i_v$-homology groups of the complex $\Omega^{\bullet}_{X,p}(\log \, \mathcal{D})$ are finite-dimensional vector spaces. Accordingly, the Euler characteristic
$$
\chi (\Omega^{\bullet}_{X,p}(\log \, \mathcal{D}), i_v) = \sum_{i=0}^n (-1)^i \dim H_i(\Omega^{\bullet}_{X,p}(\log \, \mathcal{D}), i_v)
$$
of this complex of logarithmic differential forms is well-defined. Since this invariant does not depend on the choice of the local representative $v$ of the foliation $\mathcal{F}$ at $p$, the {\it logarithmic index} of $\mathcal{F}$ at the point $p$ is given by
$$
\operatorname{{Ind}_{\log}}(\mathcal{F}, \mathcal{D}, p) := \chi (\Omega^{\bullet}_{X,p}(\log \, \mathcal{D}), i_v).
$$

In \cite{CorreaMachado2019, CorreaMachado2024}, the authors addressed the problem of establishing a global logarithmic residue theorem for meromorphic vector fields on compact complex manifolds. They proved the following result under the assumption that the invariant divisors are of normal crossings type.

\begin{theorem} 
Let $\mathcal{F}$ be a one-dimensional foliation with isolated singularities and logarithmic along  a normal crossing divisor $D$ in a compact complex manifold $X$. Then
$$
\int_{X} c_{n}(T_{X}(-\log D) - T\mathcal{F}) = \sum_{x \in \operatorname{Sing}(\mathcal{F}) \cap (X \setminus D)} \mu_x(\mathcal{F}) + \sum_{x \in \operatorname{Sing}(\mathcal{F}) \cap D} \operatorname{Ind}_{\log}(\mathcal{F}, D, x),
$$
where  $\mu_x(\mathcal{F})$ is the Milnor number of $\mathcal{F}$ at $x$. 
\end{theorem}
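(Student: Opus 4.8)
The plan is to deploy the Baum--Bott localization machinery of Section~2 for the logarithmic tangent bundle and then to match the resulting local residues with the two algebraic invariants on the right-hand side. Since $D$ has normal crossings, $T_X(-\log D)$ is locally free of rank $n$ and, $\mathcal{F}$ being logarithmic along $D$, a local generator $v$ of $\mathcal{F}$ is a holomorphic section of $T_X(-\log D)$; thus the inclusion $T\mathcal{F}\hookrightarrow T_X(-\log D)$ is a genuine subbundle with locally free quotient $Q$ of rank $n-1$ over $X\setminus\operatorname{Sing}(\mathcal{F})$. (One checks that the locus where $v$ degenerates as a log section is contained in $\operatorname{Sing}(\mathcal{F})$, using tangency to $D$ and that the singularities are isolated.) Choosing a Bott-type connection adapted to this splitting, the Chern--Weil form representing $c_n\bigl(T_X(-\log D)-T\mathcal{F}\bigr)=c_n(Q)$ is a $2n$-form built from the curvature of a rank-$(n-1)$ bundle, hence vanishes identically on $X\setminus\operatorname{Sing}(\mathcal{F})$. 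This is the logarithmic analogue of Bott's vanishing (Theorem~\ref{Bott theorem}), and it concentrates the integral at the isolated singular points: $\int_X c_n(T_X(-\log D)-T\mathcal{F})=\sum_{x\in\operatorname{Sing}(\mathcal{F})}\operatorname{Res}_x$, where $\operatorname{Res}_x\in\Z$ is the localization of Definition~\ref{def2.1}.

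The next step is to express each $\operatorname{Res}_x$ as the Euler characteristic of a contraction complex. The elementary Chern-class identity $c_n(E-L)=c_n(E\otimes L^\vee)$, valid for a rank-$n$ bundle $E$ and a line bundle $L$, lets me replace the virtual top Chern class by that of the honest rank-$n$ bundle $E:=T_X(-\log D)\otimes T\mathcal{F}^\vee$, whose tautological section is the inclusion $\phi\colon T\mathcal{F}\to T_X(-\log D)$ and whose zero scheme is $\operatorname{Sing}(\mathcal{F})$. The key lemma I would prove is that the localized class $c_n(E)$ at an isolated zero equals the holomorphic Euler characteristic of the Koszul complex $(\wedge^\bullet E^\vee,\,i_\phi)$. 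Trivializing $T\mathcal{F}$ locally by $v$ identifies $E\cong T_X(-\log D)$, $\wedge^\bullet E^\vee\cong\Omega^\bullet_X(\log D)$ and $i_\phi\cong i_v$, so this Euler characteristic is exactly $\chi\bigl(\Omega^\bullet_{X,x}(\log D),i_v\bigr)$.

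Granting the lemma, the theorem follows by reading off the two cases. If $x\notin D$, then $T_X(-\log D)$ agrees with $T_X$ near $x$, the logarithmic complex reduces to the ordinary Koszul complex (\ref{Koszul}), and its Euler characteristic is the Poincar\'e--Hopf index $\dim_\C\mathcal{O}_{X,x}/(v_1,\dots,v_n)$, i.e. the Milnor number $\mu_x(\mathcal{F})$ of Definition~\ref{Milnor number of a foliation}. If $x\in D$, the complex is precisely the one used by Aleksandrov, so its Euler characteristic is $\operatorname{Ind}_{\log}(\mathcal{F},D,x)$ by definition. Summing over the two types of singular points gives the asserted formula.

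The main obstacle is the key lemma: passing from the analytic, Chern--Weil localized residue to the algebraic Euler characteristic of the logarithmic complex. The natural tool is Suwa's \v{C}ech--Dolbeault (or Grothendieck--Riemann--Roch) localization, which represents the localized top Chern class by the Koszul resolution of the defining section; here one must verify that $(\Omega^\bullet_X(\log D),i_v)$ is acyclic away from $\operatorname{Sing}(\mathcal{F})$ --- Koszul exactness, since $v$ is a nonvanishing log section there --- so that it genuinely resolves a sheaf supported at the singular points, and that its homology is finite-dimensional at each $x\in D$. This is where the normal-crossing hypothesis is essential, guaranteeing both the local freeness of $T_X(-\log D)$ and that the zero scheme of the log section sits inside the isolated set $\operatorname{Sing}(\mathcal{F})$; it is also the point at which the compatibility between the topological and homological notions of localization must be established carefully.
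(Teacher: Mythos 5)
The survey states this theorem without proof, deferring to \cite{CorreaMachado2019, CorreaMachado2024}, and your argument is precisely the localization strategy of those references: rewrite $c_{n}(T_{X}(-\log D)-T\mathcal{F})$ as $c_{n}(T_{X}(-\log D)\otimes T\mathcal{F}^{\vee})$, localize at the zeros of the tautological logarithmic section (which, since $v$ tangent to $D$ is a genuine section of the locally free bundle $T_{X}(-\log D)$, are contained in $\operatorname{Sing}(\mathcal{F})$), and identify each local residue with the Euler characteristic of the contraction complex --- the ordinary Koszul complex giving $\mu_{x}(\mathcal{F})$ off $D$, and Aleksandrov's complex $(\Omega^{\bullet}_{X,x}(\log D), i_{v})$ giving $\operatorname{Ind}_{\log}(\mathcal{F},D,x)$ on $D$. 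The argument is correct; note only that under the normal-crossings and isolated-singularity hypotheses your ``key lemma'' simplifies, since the components of the log section form a regular sequence, so both the localized top Chern class and the Euler characteristic of the logarithmic Koszul complex reduce to $\dim_{\C}\mathcal{O}_{X,x}/(a_{1},\dots,a_{n})$.
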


In \cite{CorreaMachado2019}, the authors prove that if $\mathcal{F}$  is  a one-dimensional foliation on $\mathbb{P}^n$, with $n$ odd, and  isolated singularities  (non-degenerate)  along a  smooth divisor $D$, then  $\mbox{Sing}(\F)\subset D$ if and only if  Soares’ bound \cite{SoaresPoinc1} for
the Poincar\'e problem is achieved, i.e, if and only if
$$
\deg(D)=\deg(\F)+1.
$$

More recently, in \cite{CLM2024}, the authors developed a Baum–Bott residue theory for foliations by logarithmic curves along free divisors:

\begin{theorem}\label{logBB}
    Let $\mathcal{F}$ be a one-dimensional holomorphic foliation on a complex manifold $X$, logarithmic along a free divisor $D \subset X$, and let $\varphi$ be a homogeneous symmetric polynomial of degree $n$. If $\mathcal{F}$ has  only isolated singularities, then  
$$
\int_{X} \varphi(TX(-\log D) - T\mathcal{F}) = \sum_{x \in \operatorname{Sing}(\mathcal{F}) \cap (X \setminus D)} \operatorname{BB}_{\varphi}(v, x) + \sum_{x \in \operatorname{Sing}(\mathcal{F}) \cap D} \operatorname{Res}^{\log}_{\varphi}(\mathcal{F}, p),
$$
where $\operatorname{BB}_{\varphi}(v, x)$ is the Baum--Bott residue of $\mathcal{F}$ at $x$ with respect to $\varphi$.  
If, in addition, $D$ has normal crossings at $p$, then
$$
\operatorname{Res}^{\log}_{\varphi}(\mathcal{F}, D, p) = (2\pi\sqrt{-1})^n \operatorname{Res}_p\left[\begin{array}{cccc}\varphi(J_{\log} \vartheta)  \\ v_1, \ldots, v_n \end{array}\right].
$$
Also, when $\varphi = \det$, we have 
$$
\operatorname{Res}^{\log}_{\det}(\mathcal{F}, D, p) = (-1)^n (2\pi\sqrt{-1})^n \operatorname{Ind}_{\log}(\mathcal{F}, D, p).
$$
\end{theorem}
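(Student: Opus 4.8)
The plan is to run the Chern--Weil localization argument underlying the classical Baum--Bott theorem, but carried out on the logarithmic tangent bundle rather than $TX$. The enabling observation is that, since $D$ is a \emph{free} divisor, Saito's criterion guarantees that $TX(-\log D)$ is locally free, hence a genuine holomorphic vector bundle of rank $n$ to which Chern--Weil theory applies; this is precisely where the free-divisor hypothesis enters. Because $\mathcal{F}$ is logarithmic along $D$, a local generator $v$ of $\mathcal{F}$ is a logarithmic vector field, so the morphism $T\mathcal{F} \hookrightarrow TX(-\log D)$ is a bundle monomorphism away from $\operatorname{Sing}(\mathcal{F})$. On $X_0 := X \setminus \operatorname{Sing}(\mathcal{F})$ this yields an exact sequence
\[
0 \longrightarrow T\mathcal{F} \longrightarrow TX(-\log D) \longrightarrow N_{\log}\mathcal{F} \longrightarrow 0,
\]
so that $\varphi\big(TX(-\log D) - T\mathcal{F}\big) = \varphi(N_{\log}\mathcal{F})$ as virtual classes, where the logarithmic normal bundle $N_{\log}\mathcal{F}$ has rank $n-1$.

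First I would establish a \emph{logarithmic Bott vanishing}: the leaves of $\mathcal{F}$ induce a partial holomorphic connection on $N_{\log}\mathcal{F}$ along $T\mathcal{F}$, and Bott's argument (Theorem \ref{Bott theorem}) then forces any symmetric polynomial of degree exceeding $n-1$ in the Chern forms of $N_{\log}\mathcal{F}$ to vanish identically on $X_0$. Since $\varphi$ has degree $n > n-1$, this shows $\varphi(N_{\log}\mathcal{F}) = 0$ on $X_0$, so the top-degree class $\varphi(TX(-\log D) - T\mathcal{F}) \in H^{2n}(X;\mathbb{C})$ is supported on the finite set $\operatorname{Sing}(\mathcal{F})$. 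Comparing an $\mathbf{s}$-trivial connection adapted to the foliation on a punctured neighborhood of each singularity with a global connection produces, via the standard difference-form construction, a residue localized at each point; summing and integrating over $X$ yields the asserted global identity with summands indexed by $\operatorname{Sing}(\mathcal{F})$.

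Next I would split the singular set according to whether a point lies on $D$. Off $D$ one has $TX(-\log D) \cong TX$ locally, the logarithmic structure is invisible, and the local residue is exactly the classical Baum--Bott residue $\operatorname{BB}_\varphi(v,x)$. For a point $p \in D$ at which $D$ is normal crossing, I would choose coordinates with $D = \{z_1 \cdots z_r = 0\}$ and adopt the logarithmic frame $z_1\partial_{z_1}, \dots, z_r\partial_{z_r}, \partial_{z_{r+1}}, \dots, \partial_{z_n}$ for $TX(-\log D)$; writing $v$ in this frame and computing its logarithmic Jacobian $J_{\log}\vartheta$, the local residue becomes the Grothendieck residue of $\varphi(J_{\log}\vartheta)$ against the components $v_1,\dots,v_n$, exactly as in the non-logarithmic expression \eqref{0001}. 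This gives the stated normal-crossing formula.

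The determinant case is where the main obstacle lies. Here $\varphi = \det$ corresponds to the top Chern class, and I would identify $\operatorname{Res}^{\log}_{\det}(\mathcal{F},D,p)$ with $(-1)^n \operatorname{Ind}_{\log}(\mathcal{F},D,p)$ by matching the analytic residue to the Euler characteristic of the logarithmic contraction complex $(\Omega^{\bullet}_{X,p}(\log D), i_v)$. This is the step demanding genuine work: following the algebraic argument of Gómez-Mont --- here applied to the logarithmic Koszul-type complex rather than the ordinary one --- one shows that the alternating sum of the homology dimensions equals, up to the sign $(-1)^n$, the Grothendieck residue of $\det(J_{\log}\vartheta)$. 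The free-divisor hypothesis is again decisive, as it ensures the logarithmic forms assemble into a well-behaved complex with finite-dimensional, computable homology concentrated at $p$, so that the residue-to-index comparison can be pushed through exactly as in the smooth case treated in Theorem \ref{7.2.1}.
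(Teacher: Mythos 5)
This theorem is stated in the paper as a citation of \cite{CLM2024}; the survey gives no proof of it, so there is no in-paper argument to compare yours against. Judged on its own, your outline follows what is almost certainly the intended strategy: use Saito's criterion to see that the free-divisor hypothesis makes $TX(-\log D)$ a genuine rank-$n$ bundle, observe that a logarithmic foliation gives a Bott partial connection on the quotient $N_{\log}\mathcal{F}$ of rank $n-1$ (the bracket of two logarithmic vector fields is logarithmic, so the partial connection is well defined), invoke Bott vanishing in degree $n>n-1$ to localize $\varphi(TX(-\log D)-T\mathcal{F})$ at the singular points, and note that off $D$ the logarithmic structure is invisible so the local contributions are the classical Baum--Bott residues. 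One small point you should make explicit: the Chern--Weil localization naturally occurs at the zero set of $v$ viewed as a section of $TX(-\log D)$, which is contained in, but can be strictly smaller than, $\operatorname{Sing}(\mathcal{F})\cap D$ (e.g.\ $v=x\partial_x+y\partial_y$ with $D=\{x=0\}$ is nonvanishing in the log frame at the origin); the extra points of $\operatorname{Sing}(\mathcal{F})\cap D$ then contribute zero, so the bookkeeping in the statement is consistent, but this deserves a sentence.

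The two places where your proposal is an assertion rather than a proof are exactly the two displayed local formulas. For the normal-crossings identity you say ``writing $v$ in this frame and computing its logarithmic Jacobian, the local residue becomes the Grothendieck residue,'' but the analogue of \eqref{0001} requires actually constructing the $v$-trivial connection on $N_{\log}\mathcal{F}$ near $p$, comparing it with a global one, and evaluating the difference form — in particular one must justify why the denominators in the Grothendieck residue are the ordinary components $v_1,\dots,v_n$ rather than the coefficients of $v$ in the logarithmic frame; this is not automatic and is where the computation lives. For the $\varphi=\det$ case you defer entirely to ``following Gómez-Mont,'' but the identification of the residue of $\det(J_{\log}\vartheta)$ with $(-1)^n\chi(\Omega^{\bullet}_{X,p}(\log D),i_v)$ is essentially Aleksandrov's theorem on the logarithmic index and is the genuinely hard analytic/homological input; a complete proof must either cite it precisely or reprove the finite-dimensionality and the Euler-characteristic computation for the logarithmic contraction complex. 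As a roadmap your proposal is sound; as a proof it leaves these two steps open.
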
 

As an application of this result, we can derive a Baum–Bott-type formula for singular varieties. Consider $T_Y := \operatorname{Hom}(\Omega_Y^1, \mathcal{O}_Y)$ the tangent sheaf of $Y$, where $\Omega_Y^1$ represents the sheaf of Kähler differentials on $Y$. A holomorphic foliation on $Y$ is a reflexive subsheaf $T\F$ of $T_Y$ such that $[T\F,T\F]\subset T\F$. Then
the following result holds.

\begin{corollary}
     \label{Baum-Bott-sing}
Let $Y$ be a compact normal variety, and let $T\mathcal{F} \subset T_Y$ be a one-dimensional  holomorphic foliation with isolated singularities. Let $\varphi$ be a homogeneous symmetric polynomial of degree $n = \dim(Y)$. If $\pi: (X, D) \to (Y, \emptyset)$ is a functorial log resolution of $Y$, then  
$$
\int_{X} \varphi(T_{X}(-\log D) - (T \pi^{-1}\mathcal{F})^*) = \sum_{x \in \operatorname{Sing}(\mathcal{F}) \cap Y_{\operatorname{reg}}} \operatorname{BB}_{\varphi}(\mathcal{F}, x) + \sum_{S_{\lambda} \subset D \cap \operatorname{Sing}(\pi^* \mathcal{F})} \operatorname{Res}^{\log}_{\varphi}(\pi^* \mathcal{F}, S_{\lambda}),
$$
where $\operatorname{BB}_{\varphi}(\mathcal{F}, x)$ is the Baum–Bott index of $\mathcal{F}$ at $x$ with respect to $\varphi$. In particular, if $D \cap \operatorname{Sing}(\pi^{-1} \mathcal{F})$ is isolated, then  
$$
\int_{X} \varphi(T_{X}(-\log D) - (T\pi^{-1} \mathcal{F})^*) =  \qquad \qquad \qquad \qquad \qquad  \qquad \qquad \qquad \qquad $$ $$ \qquad \qquad \qquad  =  \sum_{x \in \operatorname{Sing}(\mathcal{F}) \cap Y_{\operatorname{reg}}} \operatorname{BB}_{\varphi}(\mathcal{F}, x) + \sum_{x \in D \cap \operatorname{Sing}(\pi^{-1} v)} \operatorname{Res}_p\left[\begin{array}{cccc} \varphi(J_{\log}(\pi^{-1} v)) \\ v_1, \ldots, v_n \end{array}\right].
$$
If, in addition, \(  \pi^{-1} \mathcal{F} \) is induced by a global vector field $v$ and \( \operatorname{Sing}(\mathcal{F}) \subset \operatorname{Sing}(Y) \),  then the following  \textit{Poincaré–Hopf}   type formula  holds: 
$$
\chi(Y) = \sum_{x \in D \cap \operatorname{Sing}(\pi^{-1} v)} \operatorname{Res}_p\left[\begin{array}{cccc} \varphi(J_{\log}(\pi^{-1} v)) \\ v_1, \ldots, v_n \end{array}\right] + \chi(\operatorname{Sing}(Y)).
$$
\end{corollary}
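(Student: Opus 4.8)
The plan is to push the problem up to the smooth model $X$ by lifting $\mathcal{F}$ through the functorial log resolution $\pi$, and then to invoke the logarithmic Baum--Bott Theorem \ref{logBB} together with the component-wise localization already available for Baum--Bott residues. The delicate step---and the one I expect to be the main obstacle---is to check that the lifted foliation $\pi^{-1}\mathcal{F}$ is genuinely \emph{logarithmic} along the exceptional divisor $D$. Over $Y_{\operatorname{reg}}$ the map $\pi$ is a biholomorphism and $\mathcal{F}$ lifts with no difficulty; the content lies along $D$. Here I would use the defining property of a functorial (hence equivariant) resolution, namely that it commutes with local automorphisms of $Y$: a holomorphic vector field $v$ locally generating $\mathcal{F}$ integrates to a local flow of automorphisms of $Y$, which by functoriality lifts to a flow of automorphisms of the pair $(X,D)$ preserving $D$, and differentiating produces a logarithmic lift $\widetilde v\in TX(-\log D)$ generating $\pi^{-1}\mathcal{F}$. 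Since on the normal variety $Y$ the tangent sheaf is defined dually, $T_Y=\mathcal{H}om(\Omega^1_Y,\mathcal{O}_Y)$, the natural action of $\pi$ on K\"ahler differentials forces the line bundle of the lift to be identified with $(T\pi^{-1}\mathcal{F})^*$, which is exactly the bundle appearing in the statement.

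Granting the logarithmic lift, I would apply the logarithmic residue theorem to the one-dimensional foliation $\pi^{-1}\mathcal{F}$ on the normal crossings pair $(X,D)$. Its singular locus decomposes into the part in $X\setminus D$, which $\pi$ identifies biholomorphically with $\operatorname{Sing}(\mathcal{F})\cap Y_{\operatorname{reg}}$, and the part contained in $D$. Because the Baum--Bott residue depends only on the local analytic type of the foliation, the residues over $X\setminus D$ are precisely the $\operatorname{BB}_\varphi(\mathcal{F},x)$ for $x\in\operatorname{Sing}(\mathcal{F})\cap Y_{\operatorname{reg}}$, giving the first sum. The contributions supported on $D$ are, by definition, the logarithmic residues $\operatorname{Res}^{\log}_\varphi(\pi^{-1}\mathcal{F},S_\lambda)$ localized on the connected components $S_\lambda$ of $D\cap\operatorname{Sing}(\pi^{-1}\mathcal{F})$; here one needs the component-wise form of Theorem \ref{logBB}, obtained exactly as the general Baum--Bott theorem localizes on compact connected components of the singular set. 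This produces the first displayed identity. For the refined formula assume $D\cap\operatorname{Sing}(\pi^{-1}\mathcal{F})$ is isolated. Then each $S_\lambda$ is a point, and the last clause of Theorem \ref{logBB} rewrites each logarithmic residue in adapted normal crossings coordinates as the Grothendieck residue $\operatorname{Res}_p\big[\varphi(J_{\log}(\pi^{-1}v))\,;\,v_1,\dots,v_n\big]$, which is the stated expression.

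Finally, for the Poincar\'e--Hopf type identity I would set $\varphi=\det$ and use the hypothesis $\operatorname{Sing}(\mathcal{F})\subset\operatorname{Sing}(Y)$, so that there are no residues over $Y_{\operatorname{reg}}$ and every contribution comes from $D$. Since $\pi^{-1}\mathcal{F}$ is induced by a global vector field whose zero set has codimension $\ge 2$, its tangent sheaf is saturated, hence $(T\pi^{-1}\mathcal{F})^*\cong\mathcal{O}_X$ and $\det\big(TX(-\log D)-(T\pi^{-1}\mathcal{F})^*\big)=c_n(TX(-\log D))$. The logarithmic Gauss--Bonnet theorem gives $\int_X c_n(TX(-\log D))=\chi(X\setminus D)$, and since $\pi$ restricts to a biholomorphism $X\setminus D\cong Y_{\operatorname{reg}}$, additivity of the Euler characteristic for complex varieties yields $\chi(X\setminus D)=\chi(Y)-\chi(\operatorname{Sing}(Y))$. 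Combining this with the residue formula and rearranging gives $\chi(Y)=\sum_x\operatorname{Res}_p\big[\varphi(J_{\log}(\pi^{-1}v))\,;\,v_1,\dots,v_n\big]+\chi(\operatorname{Sing}(Y))$.

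As noted, the crux is entirely in the first step: making rigorous that functoriality lifts $\mathcal{F}$ to a foliation logarithmic along $D$ (equivalently, that the generating vector fields lift to logarithmic vector fields tangent to $D$) and correctly identifying the virtual bundle $(T\pi^{-1}\mathcal{F})^*$. Everything downstream is a bounded application of Theorem \ref{logBB} and of standard facts, namely logarithmic Gauss--Bonnet and Euler-characteristic additivity.
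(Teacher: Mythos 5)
Your proposal is correct and follows the route the paper itself indicates: the survey states this corollary without proof, presenting it purely as an application of Theorem \ref{logBB} (quoting \cite{CLM2024}), and your argument --- lifting $\mathcal{F}$ to a foliation logarithmic along $D$ via the equivariance of the functorial resolution, applying the (component-wise) logarithmic residue theorem, and closing the Poincar\'e--Hopf case with logarithmic Gauss--Bonnet and additivity of $\chi$ --- is exactly that application spelled out. You also correctly isolate the two points that need care, namely the logarithmic lift of the generating vector fields and the localization at possibly positive-dimensional components $S_{\lambda}$ of $D\cap\operatorname{Sing}(\pi^{*}\mathcal{F})$ (Theorem \ref{logBB} as stated assumes isolated singularities), and both are handled as in the cited source.
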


  As a consequence of Theorem \ref{logBB}, we can show that the difference between the Baum-Bott and logarithmic indices can be expressed in terms of the GSV and Camacho-Sad indices.
Before stating this result, let us first establish some notation.  If  $S\subset X$  is an analytic subset and $\F$ is a foliation by curves in $X$, with isolated singularities, we denote by
$$
\mathrm{R}(\F,S)=\sum_{x\in  Sing(\F)\cap S}  \mathrm{R}(\F,x),
$$
where $\mathrm{R}$ denotes some residue associated with $\F$ along $S$.    One gets (see \cite[Corollary 1.5]{CLM2024}:

\begin{theorem} 
Let $\F$ be a one-dimensional holomorphic foliation on a compact complex surface $X$ with a  reduced  invariant curve $S \subset X$. Then
$$
\BB_{c_1^2}(\F, S)-\Res^{\log }_{c_1^2}(\F, S) =2\GSV(\F,S)-\CS(\F,S).
$$
Hence, in particular,  the  non-negativity  of 
$$
\BB_{c_1^2}(\F, S)-\Res^{\log }_{c_1^2}(\F, S) +\CS(\F,S)
$$
is an  obstruction to an affirmative answer to the Poincar\'e problem.  Moreover, if $\F$  is a generalized curve along $S$, then
$$
\Res^{\log }_{c_1^2}(\F, S)=2\Res_{c_1^2}(\F, S).
$$
\end{theorem}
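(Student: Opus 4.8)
The plan is to derive the identity by subtracting two global residue theorems on the compact surface $X$ and then converting the resulting difference of Chern numbers into Brunella's surface indices. First I would apply the ordinary Baum--Bott theorem with $\varphi=c_1^2$, giving $\int_X c_1^2(N\F)=\sum_{x\in\operatorname{Sing}(\F)}\BB_{c_1^2}(\F,x)$, together with the logarithmic Baum--Bott theorem (Theorem~\ref{logBB}) taken along $D=S$, giving $\int_X c_1^2(TX(-\log S)-T\F)=\sum_{x\notin S}\BB_{c_1^2}(\F,x)+\sum_{x\in S}\Res^{\log}_{c_1^2}(\F,S,x)$. Since $S$ is a reduced curve on a surface it is automatically a free divisor (Saito), so Theorem~\ref{logBB} applies. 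The off-$S$ singularities contribute identically to both formulas, so subtracting cancels them and isolates exactly the points of $\operatorname{Sing}(\F)\cap S$, yielding $\BB_{c_1^2}(\F,S)-\Res^{\log}_{c_1^2}(\F,S)=\int_X\big[c_1^2(N\F)-c_1^2(TX(-\log S)-T\F)\big]$.

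The next step is to compute this difference of Chern numbers. From $\det\Omega^1_X(\log S)=K_X(S)$, valid for any reduced, hence free, divisor on a surface, one gets $c_1(TX(-\log S))=c_1(TX)-[S]$, so $c_1(TX(-\log S)-T\F)=c_1(N\F)-[S]$. Hence the integrand is $c_1(N\F)^2-(c_1(N\F)-[S])^2$, and its integral is a fixed linear combination of the intersection numbers $c_1(N\F)\cdot S$ and $S\cdot S$. These are precisely the quantities localized by the global index theorems on surfaces: Khanedani--Suwa give $c_1(N\F)\cdot S=\sum_x\mathrm{Var}(\F,S,x)$, Camacho--Sad give $S\cdot S=\CS(\F,S)$, and Brunella's formula gives $\GSV(\F,S)=c_1(N\F)\cdot S-S\cdot S$. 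Substituting, and using either Brunella's formula or Proposition~\ref{var-CS-GSV} ($\mathrm{Var}=\GSV+\CS$) to eliminate the variational index, rewrites the right-hand side entirely through $\GSV(\F,S)$ and $\CS(\F,S)$ and produces the asserted identity. I expect the main obstacle here to be the sign bookkeeping in this final substitution together with the precise normalization of $\Res^{\log}_{c_1^2}$ coming out of Theorem~\ref{logBB}; pinning down the coefficient of the $\CS$-term is exactly where the conventions of \cite{Bru, CLM2024} must be matched verbatim.

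The obstruction statement is then immediate: adding $\CS(\F,S)$ to the identity collapses the right-hand side to $2\GSV(\F,S)$, so the displayed expression $\BB_{c_1^2}(\F,S)-\Res^{\log}_{c_1^2}(\F,S)+\CS(\F,S)$ is non-negative if and only if $\GSV(\F,S)\ge0$, which is precisely Brunella's obstruction to an affirmative answer to the Poincaré problem recalled in Sections~\ref{S. Brunella} and~\ref{Poincare Problem}.

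For the generalized-curve assertion I would feed $\GSV(\F,S)=0$ into the identity. Since $\F$ is a generalized curve along the maximal separatrix $S$, Brunella's vanishing theorem gives $\GSV(\F,S)=0$, so the identity reduces to $\Res^{\log}_{c_1^2}(\F,S)=\BB_{c_1^2}(\F,S)+\CS(\F,S)$. It then suffices to show $\CS(\F,S)=\BB_{c_1^2}(\F,S)=\Res_{c_1^2}(\F,S)$ for a generalized curve. I would check this first at a non-degenerate singularity with eigenvalues $\lambda_1,\lambda_2$ whose maximal separatrix is the node $\{z_1z_2=0\}$: a direct Grothendieck-residue computation gives $\BB_{c_1^2}(\F,p)=(\lambda_1+\lambda_2)^2/(\lambda_1\lambda_2)$, while the two branches have Camacho--Sad indices $\lambda_2/\lambda_1$ and $\lambda_1/\lambda_2$ and the transverse intersection of the branches contributes an extra $2$, so that $\CS(\F,S,p)=\lambda_1/\lambda_2+\lambda_2/\lambda_1+2=\BB_{c_1^2}(\F,p)$. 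The general case follows by passing to a resolution, where a generalized curve has only such reduced singularities and no saddle-nodes, and then using additivity of both residues. Substituting $\CS=\BB_{c_1^2}=\Res_{c_1^2}$ gives $\Res^{\log}_{c_1^2}(\F,S)=2\Res_{c_1^2}(\F,S)$. The delicate points are again the local identification of $\Res^{\log}_{c_1^2}$ through the logarithmic Jacobian and the control of non-simple singularities under the resolution.
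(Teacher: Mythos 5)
The paper states this result without proof (it is imported from \cite{CLM2024}), so there is no internal argument to compare against; judged on its own, your architecture --- subtract the ordinary Baum--Bott theorem from Theorem \ref{logBB} applied to $D=S$, reduce to $\int_X\bigl[c_1^2(N\F)-(c_1(N\F)-[S])^2\bigr]$, and convert via Brunella's formula and Camacho--Sad --- is the natural and presumably intended route. The difficulty is that the one step you defer to ``sign bookkeeping'' is precisely where the argument fails to land on the stated identity. Carrying out your substitution with the conventions of this paper ($\GSV(\F,S)=N\F\cdot S-S\cdot S$ and $\CS(\F,S)=S\cdot S$) gives
$$
\BB_{c_1^2}(\F,S)-\Res^{\log}_{c_1^2}(\F,S)=2\,N\F\cdot S-S\cdot S=2\,\GSV(\F,S)+\CS(\F,S),
$$
which differs from the asserted right-hand side by $2\,\CS(\F,S)=2\,S\cdot S\neq 0$ in general. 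This is not a normalization that can be absorbed afterwards: with the statements of Theorem \ref{logBB}, Brunella's index theorem and the Camacho--Sad theorem exactly as given here, your derivation proves a different identity, and you would have to trace the conventions of \cite{CLM2024} (sign of $\Res^{\log}$, orientation of the virtual bundle, sign of $\CS$) to see where the discrepancy originates before the proof can be considered complete.

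The generalized-curve step makes the problem concrete rather than resolving it. Your local computation $\CS(\F,S,p)=\lambda_1/\lambda_2+\lambda_2/\lambda_1+2=\BB_{c_1^2}(\F,p)$ at a node is correct, but at such a point $v=\lambda_1z_1\frac{\partial}{\partial z_1}+\lambda_2z_2\frac{\partial}{\partial z_2}$ has unit coefficients in the logarithmic frame, so it is a nowhere-zero section of $TX(-\log S)$ near $p$ and the localized residue there is $0$, not $2\,\BB_{c_1^2}(\F,p)$. A global test shows the same: for the degree-one foliation of $\mathbb{P}^2$ induced by a generic diagonal linear vector field, with $S$ the invariant triangle of coordinate lines, one has $TX(-\log S)\cong\mathcal{O}^{\oplus2}$ and $T\F\cong\mathcal{O}$, hence $\int_Xc_1^2(TX(-\log S)-T\F)=0$ and $\Res^{\log}_{c_1^2}(\F,S)=0$, while $\BB_{c_1^2}(\F,S)=9$, $\GSV(\F,S)=0$ and $\CS(\F,S)=9$ --- consistent with the $+\CS$ identity and with $\Res^{\log}_{c_1^2}(\F,S)=0$, not with $\Res^{\log}_{c_1^2}(\F,S)=2\Res_{c_1^2}(\F,S)=18$. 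So feeding $\GSV=0$ and $\CS=\BB$ into the identity cannot yield the final assertion from the ingredients you have assembled; as written the proposal establishes neither displayed equation in the form stated.
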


Finally, a new obstruction to giving an affirmative answer to the Poincar\'e problem for foliations by curves on $\mathbb{P}^n$,
$n$ odd, is introduced as a log index as follows(see \cite[Corollary 1.4]{CLM2024}):
let $\F$ be a one-dimensional foliation, with isolated singularities, on $\mathbb{P}^n$ with an invariant \textit{free divisor} $D$. If  $n$ is odd   and 
    $$
 \Res^{\log }_{c_{1}^n}(\F, \mbox{Sing}(\F))\geq 0
$$
then 
\begin{eqnarray*} 
\deg(D) \leq  \deg(\F) +n.
\end{eqnarray*}

\section{Final remarks}\label{s. final}

We have explored  the intertwining amongst invariants for singular varieties  and  for singular foliations. There is a lot more in the literature, and in this section we briefly comment on some recent developments related to this work.

The Milnor number permeates  this work and it is a topological invariant, fundamental  in singularity theory. There are plenty of  extensions and applications of this concept in the literature, see for instance \cite{CMS-2} for an account on this.

The  Tjurina number was introduced by Greuel in \cite{Gr2}; this is 
 the
dimension of the base space of a semi-universal deformation of an isolated  hypersurface singularity. In fact one has the Milnor and the Tjurina algebras. For a map germ $(\C^n,o) \to (\C,0)$ these are, respectively:
  $$M_f \, =  \,   \frac {\O_{\C^n,0}} { ( \frac{\partial f}{\partial z_1}, \cdots  \frac {\partial f}{\partial z_n} )} \quad \hbox{and} \quad 
 \mathcal T_f \, =  \,  \frac {\O_{\C^n,0}} { (f, \frac{\partial f}{\partial z_1}, \cdots \frac {\partial f}{\partial z_n} )}  \,.
 $$

 Their dimensions over $\C$ are the corresponding Milnor and Tjurina numbers, $\mu(f)$ and $\tau(f)$.
 There are deep relations between these two invariants (see for instance  \cite{Greuel,Dim-Gr}).

The Tjurina algebra  for holomorphic vector fields, to our knowledge, appears first in Gómez-Mont’s formula  in \cite {Gom} for the homological index (see \ref{Tjurina number of a vector field}), though  he does not use this name. The actual name of Tjurina number for foliations was coined in \cite[p. 159]{CaCoMo}, where 
F. Cano, N. Corral and R. Mol   use the intersection properties of polar curves for 1-dimensional  foliations in complex surfaces, to study properties about  non-dicritical singularities. In particular, they use  the formula in \cite {Gom} to give a polar interpretation of the GSV-index and  use this to study 
the  Poincaré problem for foliations in  $\mathbb P^2_\C$. 

In \cite {FGS} the authors study relations between the Milnor and Tjurina numbers
of singular foliations  in the complex plane with respect to a balanced divisor
of separatrices, an interesting concept introduced in \cite{Gen}. In the non-dicritical case, where there are only finitely many separatrices of the foliation, a balanced divisor just means 
the  union of all separatrices. For dicritical singularities this is 
  a finite set of separatrices which includes  all the isolated separatrices and some separatrices
from the ones associated to dicritical components. A balanced divisor  
provides a certain control of the algebraic multiplicity of the foliation and of its separatrices. The authors find in \cite {FGS} 
 important relations between various indices of a
singular foliation, 
  such as  the Baum-Bott, Camacho-Sad, GSV variational indices   and the Milnor and Tjurina numbers.

In \cite{CLS2}, Camacho, Lins Neto and Sad posed the problem of determining whether non-trivial minimal sets exist for foliations on \( \mathbb{P}^2 \). This question remains an open problem to this day. In \cite{LinsN}, Lins Neto proved the non-existence of minimal sets for codimension one foliations on $\mathbb{P}^n$, with $n \geq 3$. Motivated by Lins Neto's work, Marco Brunella in \cite{Brunella2008} initiated the investigation of this problem in higher dimensions for codimension one holomorphic foliations with ample normal bundles. 
More precisely, Brunella proposed the following conjecture: \textit{let $\mathcal{F}$ be a codimension-one foliation on a projective manifold of dimension $n \geq 3$. If $\mathcal{F}$ possesses an ample normal bundle, then every leaf of $\mathcal{F}$ necessarily accumulates on its singular set.} In the remarkable work \cite{Brunella-Perrone} by Brunella and Perrone, the authors, utilizing Baum-Bott residues, proved this conjecture  for projective  manifolds  with Picard rank one.  In \cite{AdachiBrinkschulte}, Adachi and Brinkschulte completely solved Brunella's conjecture by employing the localization of the first Atiyah class, folowing \cite{AbateBracciSuwaTovena},  of the determinant of the normal bundle of the foliation. A second proof of Brunella's conjecture was provided by Adachi, Biard, and Brinkschulte in \cite{AdachiBiardBrinkschulte2023}, where the authors employed a residue formula for meromorphic connections. This approach was inspired by a residue formula  proved  by Pereira in \cite{Pereira2024}.  
Recently, in \cite{CorreaFernandezSoares2021}, the authors generalized the variational index for currents invariant under foliations, with applications to the study of transcendental leaves of foliations and their accumulation on the singular set. This result represents a higher-dimensional generalization of the result previously established by Brunella for foliations by curves, as proven in \cite{Brunella2006}.

 The adjunction formulas for foliations have played an important role in the classification of regular foliations on  projective surfaces, as given by Brunella, and also in foliated birational geometry, see \cite{Brunella2004}. When the curve is invariant under the foliation, the difference $K_\mathcal{F} - K_C$ is the total sum of the GSV indices  along $C$. More recently, Cascini and Spicer \cite{CasciniSpicer2024} obtained an adjunction formula for foliations on algebraic varieties, where a correction term serves as a generalization of Brunella's GSV index whenever the divisor is invariant under the foliation.  We  observe that Theorem \ref{prop9} also provides an adjunction formula
$$
K\F|_V-K_V=  \det(N\F)|_V + \det (N_{V/X})^{-1}= \sum_i \mathrm{GSV}(\omega, V, S_i) [S_i], 
$$ 
since $\det(N\F)|_V= -K_X|_V +K\F|_V$ and $ \det(N_{V/X})=-K_X|_V+K_V$.
 
In fact the GSV index was originally defined for vector fields on hypersurface singularities. The extension to complete intersections was given in  \cite{SS}  where this is used to 
prove an  adjunction formula for  local complete intersections.


\printindex


\begin{thebibliography}{000}


\bibitem{AbateBracciSuwaTovena} M. Abate, F. Bracci, T. Suwa, and F. Tovena, \textit{Localization of Atiyah classes}, Rev. Mat. Iberoam. \textbf{29} (2013), no. 2, 547–578.

 \bibitem{AdachiBrinkschulte} M. Adachi and J. Brinkschulte, \textit{Dynamical aspects of foliations with ample normal bundle}, Indiana Univ. Math. J. \textbf{72} (2023), no. 5, 1931–1947.

 \bibitem{AdachiBiardBrinkschulte2023}
M. Adachi, S. Biard, and J. Brinkschulte, \textit{A residue formula for meromorphic connections and applications to stable sets of foliations}, J. Geom. Anal., 33 (2023), 338.



\bibitem{Alex}
A. G. Aleksandrov. {\em The Index of Vector Fields and Logarithmic Differential Forms}.  {Funct. Anal. Appl. 39 (2005), 245-255.}

\bibitem{AlekMulti} A.G. Aleksandrov, {\it Multidimensional residue theory and the logarithmic De Rham Complex}, Journal of Singularities,   \textbf{5} (2012), 1-18


\bibitem{Alex2}
A. G. Aleksandrov. {\em The index of differential forms on complete intersections}. {Funct. Anal. Appl. 49 (2015), 1-14.}

\bibitem{ASV}   M. Aguilar, J. Seade and A. Verjovsky,
{\em Indices of vector fields and topological invariants of real
analytic singularities}, Crelle's, J. Reine u. Ange. Math.
504 (1998), 159-176.



\bibitem{Aluffi2} P. Aluffi.  {\it MacPherson’s and Fulton’s Chern Classes of Hypersurfaces}. I.M.R.N. (1994), 455–465.
	


\bibitem{Aluffi1} P. Aluffi,
{\em Chern classes for singular hypersurfaces}, Trans. Amer. Math.
Soc. { 351} (1999), 3989-4026.


	\bibitem {Aluffi6} P. Aluffi.  {\it Segre classes and invariants of singular varieties}. In ``Handbook of Geometry and Topology of Singularities III", ed. J. L. Cisneros Molina {\it et al}, Springer Verlag 2022, p. 419-491.

\bibitem{Baum1} P. Baum, R. Bott, {\em On the zeros of meromorphic vector fields}, Essay on Topology and Related Topics, Springer-Verlag, New York, 1970, 29-47.

\bibitem{Baum2} P. Baum, R. Bott, {\em Singularities of holomorphic foliations}, J. Differential Geom. 7 (1972), 279-342.


\bibitem{B-GM}   Ch. Bonatti and X. G\'omez-Mont,
{\em The index of a holomorphic vector field on a singular
variety},  Ast\'erisque 222    (1994), 9-35.



\bibitem{BEG} H.-C. G. von Bothmer, W. Ebeling and X.
G\'omez-Mont, {\em An algebraic formula for the index of a vector field
on an isolated complete intersection singularity},  Ann. Inst. Fourier (Grenoble)  58  (2008),  no. 5, 1761--1783.


\bibitem{Bott1967} R. Bott, {\em A residue formula for holomorphic vector fields}, J. Differential Geom. 4 (1967), 311-332.


\bibitem{Bott1970}  R. Bott, {\em
On topological obstructions to integrability.}
Actes Congr. Int. Math., Nice 1970, Tome 1, 27- 36 (1971).



\bibitem{BracciSuwa2015} F. Bracci, T. Suwa, {\em Perturbation of Baum-Bott residues}, Asian J. Math. 19, no. 5 (2015), 871-886.



\bibitem{Bra} J.-P. Brasselet, {\em  Characteristic Classes}.
In ``Handbook of Geometry and Topology of Singularities III", ed. J. L. Cisneros Molina {\it et al}, Springer Verlag 2022, p. 303-417.


\bibitem{BS} J.-P. Brasselet, M.-H. Schwartz, {\em Sur les classes de Chern
d'un ensemble analytique complexe}, Ast\'erisque { 82-83} (1981), 93-147.

\bibitem{BLSS2}  J.-P. Brasselet, D. Lehmann, J. Seade and T. Suwa, 
{\em Milnor classes of local complete intersections}, Transactions
A. M. S. { 354} (2001), 1351-1371.




\bibitem{BLS} J.-P. Brasselet,  D. T. L\^e, J. Seade. {\em Euler obstruction and indices of vector fields}. Topology 39 (2000), 1193-1208.

\bibitem{BSS} 
J.-P. Brasselet, J. Seade,  T. Suwa.
\newblock {\em Vector fields on singular varieties}.
\newblock  Lecture Notes in Mathematics {1987} (2009). Springer-Verlag, Berlin.



\bibitem{Bru} M. Brunella, {\em Some remarks on indices of holomorphic fields}, Publicacions Matem\'atiques, \textbf{41} (1997), 527-544.


\bibitem{Brunella2006} 
M. Brunella, \textit{Inexistence of invariant measures for generic rational differential equations in the complex domain}, Bol. Soc. Mat. Mexicana (3) \textbf{12} (1) (2006), 43–49.


\bibitem{Brunella2008} M. Brunella, \textit{On the dynamics of codimension one holomorphic foliations with ample normal bundle}, Indiana Univ. Math. J., \textbf{57}(7) (2008), 3101-3113.


\bibitem{BruMe} M. Brunella, L. G. Mendes, {\it Bounding the degree of Solutions to Pfaff Equations}, Publicacions Matem\'atiques, \textbf{44}, (2000), 593-604.


\bibitem{Brunella2004} M. Brunella, {\em Birational Geometry of Foliations}, IMPA Mathematical Publications, Instituto de Matemática Pura e Aplicada (IMPA), Rio de Janeiro, 2004.

\bibitem{Brunella-Perrone}
M. Brunella, C. Perrone, {\em  Exceptional singularities of codimension one
holomorphic foliations}, Publ. Mat. 55 (2011), 295–312.

\bibitem{CMS-2} Callejas-Bedregal, R., Morgado,  M. F. Z. and  Seade, J., {\it Milnor number and Chern classes for singular varieties: an introduction.} In ``Handbook of Geometry and Topology of Singularities III", ed. J. L. Cisneros Molina {\it et al}, Springer Verlag 2022, p. 494-563.





\bibitem{CLS} 
C. Camacho, A. Lins, P.  Sad, {\em Topological invariants and equidesingularization for
holomorphic vector fields}. J. Differ. Geom. 20, 143-174 (1984).

\bibitem{CLS2}
C. Camacho, A.  Lins Neto, P.  Sad, 
\textit{Minimal sets of foliations on complex projective spaces}, 
 Publications Math\'ematiques de l'Institut des Hautes \'Etudes Scientifiques, 
No. 68 (1988), 187-203.


\bibitem{CS} C. Camacho, P. Sad, {\it Invariant varieties through singularities of holomorphic vector fields}, Ann. of Math. \textbf{115} (1982) , 579-595.
nd topology of singularities III. Ed. J. L. Cisneros-Molina {\it et al}. Springer. 493-564 (2022).

\bibitem{CaCoMo} 
F. Cano, N. Corral and R. Mol, {\em Local polar invariants for plane singular foliations}, Expo. Math. 37 (2019), 145-164


\bibitem{CasciniSpicer2024} P. Cascini, C. Spicer, {\em Foliation adjunction}, arXiv:2309.10697, 2024.

\bibitem{CavalierLehmann2006} V. Cavalier, D. Lehmann, {\em On the Poincaré inequality for one-dimensional foliations}, Compositio Mathematica, 142(2) (2006), 529-540.


\bibitem{CavalierLehmann2001} V. Cavalier and D. Lehmann, {\em Localisation des résidus de Baum–Bott, courbes généralisées et K-théorie (I: feuilletages dans $ {\mathbb C}^2 $)}, Commentarii Mathematici Helvetici, {\bf 76} (2001), 665–683.

\bibitem{CLins} D. Cerveau and A. Lins Neto, {\em Holomorphic foliations in $\mathbb P^2_\C$ having an invariant algebraic curve, }Ann. Inst. Fourier 41 (1991), 883-903. 


\bibitem{CJ} M. Corr\^ea Jr, M. Jardim, {\it Bounds for sectional genera of varieties invariant under Pfaff fields}, Illinois Journal of Mathematics, vol. \textbf{56} (2) (2012), 343-352.

\bibitem{CLM2024} M. Corr\^ea, F. Louren\c{c}o, and D. Machado, {\it Log Baum-Bott Residues for Foliations by Curves}, 2024. arXiv:2411.04015v1.

\bibitem{Correa2024} M. Corrêa, {\em Analytic Varieties Invariant by Holomorphic Foliations and Pfaff Systems}, in: F. Cano, J.L. Cisneros-Molina, L. Dũng Tráng, J. Seade (eds) Handbook of Geometry and Topology of Singularities VI: Foliations, Springer, 2024.

\bibitem{CorreaFernandezSoares2021} M. Corr\^ea, A. Fern\'andez-P\'erez, and M. G. Soares, {\it Brunella–Khanedani–Suwa Variational Residues for Invariant Currents}, Journal of Singularities \textbf{23} (2021), 107–115.

\bibitem{CorreaLourenco2019} M. Corrêa and F. Lourenço, {\em Determination of Baum-Bott residues of higher codimensional foliations}, Asian J. Math., 23 (3) (2019), 527–538.

\bibitem{CorreaMachado2019} M. Corrêa, D. Machado, {\em Residue formulas for logarithmic foliations and applications}, Trans. Amer. Math. Soc., 371 (2019), 6403-6420.

\bibitem{Correa-MachadoGSV} M. Corr\^ea and D. Machado, \textit{GSV--index for holomorphic Pfaff systems}, Documenta Mathematica, vol. 25, pp. 1011--1027, 2020.

\bibitem{CorreaMachado2024} M. Corrêa, D. Machado, {\em Global residue formula for logarithmic indices of one-dimensional foliations}, Ann. Math., 32 (2024), no. 4, 1095-1118.

\bibitem{CorreaRodriguezSoares2016} M. Corrêa, A. M. Rodríguez, M. G. Soares, {\em A Bott type residues formula for complex orbifolds}, International Mathematics Research Notices, (10), 2889-2911, 2016.


\bibitem{CukiermanPereira2008} F. Cukierman and J. V. Pereira, 
{\em Stability of holomorphic foliations with split tangent sheaf}, Amer. J. Math., {\bf 130} (2008), no. 2, 413–439.


\bibitem{EstCruz}J.  D. A. Cruz,   E. Esteves, {\it Regularity of subschemes invariant under Pfaff fields on projective spaces}. Commentarii Mathematici Helvetici \textbf{ 86} (2011),   947-965.

\bibitem{Dim-Gr}
A. Dimca, G.-M. Greuel, {\em On 1-forms on isolated complete intersection on curve singularities,} J. of
Singularities 18 (2018), 114-118.

\bibitem{EG1}
W. Ebeling and S. Gusein-Zade, {\em On the index of a vector field
at an isolated singularity}. The Arnoldfest (Toronto, ON, 1997),
141-152, Fields Inst. Commun., 24, Amer. Math. Soc., Providence,
RI, 1999.


\bibitem{EG}
W. Ebeling and S. Gusein-Zade, {\em  Indices of vector fields and 1-forms}. In 
 Handbook of geometry and topology of singularities III. Ed. J. L. Cisneros-Molina {\it et al}. Springer. 493-564 (2022).


\bibitem{EGS}  
W. Ebeling, G. Gusein-Zade, J. Seade. {\em 
Homological index for 1-forms and a Milnor number for
isolated singularities}. International Journal of Maths.  15  (2004),  no. 9, 895-905.

\bibitem{EL}   D. Eisenbud and H. Levine, {\em
An algebraic formula for the degree of a $C\sp{\infty }$ map
germ}, Ann. Math. (2) 106 (1977), no. 1, 19-44.


\bibitem{EllingsrudStromme1996} G. Ellingsrud and S. Stromme, {\em Bott’s formula and enumerative geometry}, J. Amer. Math. Soc. 9 (1996), no. 1, 175-193.






\bibitem{FGS} A. Fernández-Pérez, E. R. García Barroso, N. Saravia-Molina. {\em
On Milnor and Tjurina numbers of foliations}; arXiv:2112.14519.

\bibitem{Fu} W. Fulton,
{\em Intersection Theory},  Springer-Verlag (1984).

\bibitem{Gen}
Y. Genzmer, {\em Rigidity for dicritical germ of foliation in $\C^2$}. Int. Math. Res. Notices 19 (2007), 14 p. 
 
\bibitem{GGM1}  L. Giraldo, X. G\'omez-Mont and P. Mardesic,  {\em
On the complex formed by contracting differential forms with a
vector field on a hypersurface singularity}, Bol. Soc. Mat.
Mexicana (3) 7 (2001), no. 2, 211-221.

\bibitem{GGM2}  L. Giraldo, X. G\'omez-Mont and P. Mardesic,  {\em
A law of conservation of number for local Euler characteristics},
Complex manifolds and hyperbolic geometry (Guanajuato, 2001),
251-259, Contemp. Math., 311, Amer. Math. Soc., Providence, RI,
2002.

\bibitem{GGM3}  L. Giraldo, X. G\'omez-Mont and P. Mardesi\'c,  {\em On the index of vector fields tangent to hypersurfaces with
non-isolated singularities},  J. London Math. Soc. (2) 65 (2002).


\bibitem{Gom}  X. G\'omez-Mont. {\em An algebraic formula for the index of a
vector field on a hypersurface with an isolated singularity.} J.
Algebraic Geom. {7}  (1998), 731-752.


\bibitem{GSV1} X. G\'omez-Mont, J. Seade, A. Verjovsky. {\em The index of a
holomorphic flow with an isolated singularity}. Math. Ann. {
291} (1991), 737-751.



\bibitem{Gr}
 G.-M. Greuel,  {\em Der Gau\ss-Manin-Zusammenhang isolierter
Singularit\"aten von vollst\"andigen Durchschnitten}, Math. Ann.
{\bf 214} (1975), 235-266.

\bibitem{Gr2}
G.M. Greuel. {\em Dualität in der lokalen Kohomologie isolierter Singularitäten}. Math. Ann. 250
(2) (1980), 157-173.

\bibitem{Greuel}
G.M. Greuel, Ch. Lossen, E. Shustin, {\em 
Introduction to Singularities and Deformations}. Springer Monographs in Mathematics, 2007.

\bibitem{Ham}
H. Hamm.
\newblock {\em Lokale topologische Eigenschaften komplexer R\"aume.}
\newblock {\em Math. Ann.}, 191 (1971), 235--252.


\bibitem{Kaufmann2023} L. Kaufmann, R. Lärkäng, and E. Wulcan, {\em Baum–Bott residue currents}, arXiv preprint, arXiv:2302.08887, 2023.


\bibitem{KS} B. Khanedani, T. Suwa, {\it First variation of holomorphic forms and some applications}, Hokkaido Math. J. \textbf{26} (1997) , 323-335.


\bibitem{Khi}  G.N.  Khimshiashvili,  {\em
On the local degree of a smooth mapping}, Comm.  Acad. Sci.
Georgian SSR 85, No. 2  (1977),  309-312 (in Russian).


\bibitem{KT} H. King and D. Trotman, {\em Poincar\'e-Hopf theorems on
stratified sets},  preprint 1996. (Published as ``Poincar\'e-Hopf theorems on singular spaces'', Proc. Lond. Math. Soc. (3) 108, No. 3, 682-703 (2014).)

\bibitem{Kontsevich1995} M. Kontsevich, {\em Enumeration of rational curves via torus actions}, The Moduli Space of Curves (Progress in Mathematics, Vol. 129), Birkhäuser, 1995, 335–368.

\bibitem{Le7} D.T. L\^ e, {\em Computation of the Milnor number of an
isolated singularity of a complete intersection}, Funct. Anal.
Appl. {\bf 8} (1974), 45-49.


\bibitem{LSS}
 D. Lehmann, M. Soares and T. Suwa,
{\em On the index of a holomorphic vector field tangent to a
singular variety}, Bol. Soc. Bras. Mat. { 26} (1995), pp
183-199.


\bibitem{LinsN}
A. Lins Neto, 
\textit{A note on projective Levi flats and minimal sets of algebraic foliations}, 
Annales de l'Institut Fourier, 
Vol. 49, No. 4 (1999).


\bibitem{Loo} 
 E. Looijenga. {\em Isolated Singular Points on Complete Intersections}. Cambridge Univ. Press, Cambridge, London, New York, New Rochelle, Melbourne, Sydney, 1984.

\bibitem{MP} R. MacPherson, {\em  Chern classes for singular varieties},
Ann. of Math {\bf 100} (1974), 423-432.


\bibitem{Mi1}
J. W. Milnor. {\em Singular points of complex hypersurfaces}. Annals
of Mathematics Studies, { 61}, Princeton University Press,
  Princeton, N.J., 1968.


\bibitem{Pereira2024}
J. V. Pereira, \textit{Closed meromorphic 1-forms},  Handbook of Geometry and Topology of Singularities V: Foliations, 447–499. Springer, Cham (2024).


\bibitem{Poin} H. Poincar\'e, {\it Sur l'integration algebrique des equations diff\'erentielles du premier ordre et du premier degr\'e}, Rendiconti del Circolo Matematico di Palermo \textbf{5} (1891), 161-191;  11 (1897), 193–239.


\bibitem{Rosas} R. Rosas, {\em Transversely product singularities of foliations in projective spaces}, Comptes Rendus Mathématique, {\bf 361} (2023), no. G11, 1785–1787.



\bibitem{Sai} K. Saito, {\it Theory of logarithmic differential forms and logarithmic vector fields}, J. Fac. Sci. Univ. Tokyo \textbf{27}(2) (1980), 265-291.


\bibitem{Sch1} M.-H. Schwartz, {\em Classes caract\'eristiques d\'efinies par
une stratification d'une vari\'et\'e analytique complexe},
C.R.\,Acad.\,Sci. Paris { 260} (1965),  3262-3264, 3535-3537.

\bibitem{Seade} J. Seade, {\em The index of a vector field on a complex surface  with
singularities}, in  ``The Lefschetz  Centennial  Conf.",  ed. A.
Verjovsky,
 Contemp. Math. {\bf 58},  Part III, Amer. Math. Soc. (1987), 225-232.


\bibitem{Seade2008} J. Seade, {\em Remarks on contact structures and vector fields on isolated complete intersection singularities}, The Proceedings of the IX Workshop on Real and Complex Singularities, edit. M. Saia {\it et al}, Contemporary
Mathematics, AMS. 459, 2008, 195-205.
	
\bibitem{SS1} J. Seade, T. Suwa. {\em
A residue formula for the index of a holomorphic flow}.   Mathematische Annalen 304 (1996), 621-634.
	
	
\bibitem{SS} J. Seade, T. Suwa. {\em An adjunction formula for local complete intersections}.  Int. J.  Maths. 9, 1998, 759-768.


\bibitem{SoaresPoinc1} M. Soares, {\em The Poincaré problem for hypersurfaces invariant by one–dimensional foliations,}
Invent. Math. 128 (1997), 495-500.



\bibitem{Suwa-surfaces} T. Suwa, {\it Indices of holomorphic vector fields relative to invariant curves on surfaces}, Proc. Amer. Math. Soc. \textbf{123} (1995) , 2989-2997.


\bibitem{Suwa-Herman}   T. Suwa, {\it Indices of Vector Fields and Residues of Singular Holomorphic Foliations},
Actualit\'es Math\'ematiques, Hermann Paris, 1998.


\bibitem{Suwa2014} T. Suwa, GSV-{\it Indices as Residues}, Journal of Singularities \textbf{9} (2014), 206-218.


	\bibitem{Verdier2} J.-L. Verdier. {\it Sp\'ecialisation des classes de Chern}. Ast\'erisque 82-83 (1981), 149-159.
	
	
\bibitem{Vishik1973} M. S. Vishik, {\em Singularities of analytic foliations and characteristic classes}, Funct. Anal. Appl. 7 (1973), 1-15.




\bibitem{Zach1} M. Zach. {\em 	A generalization of Milnor’s formula}.
	Math. Annalen 382, 901-942.
	
	\bibitem{Zach2} M. Zach. {\em 	Some L\^e-Greuel type formulae on stratified spaces}. Preprint 2024, arXiv:2411.02682. 

\end{thebibliography}
\end{document}